\documentclass[12pt,a4paper]{article}

\usepackage{latexsym}
\usepackage{graphicx}
\usepackage{subfigure}\usepackage{amsmath}
\usepackage{float}
\usepackage{color}
\usepackage{cite}
\usepackage{enumerate}
\usepackage{epsfig}
\usepackage{xcolor}
\usepackage{caption}
\usepackage{changes}

\newcommand{\proof}{\noindent{\bf Proof.}\ \ }
\newcommand{\qed}{\hfill $\Box$}

\newtheorem{theorem}{Theorem}
\newtheorem{claim}{Claim}
\newtheorem{lemma}{Lemma}

\newtheorem{observation}{Observation}

\begin{document}

\baselineskip=15pt
\parindent=0.5cm

\title{\Large {\bf  1-Planar graphs without $6$-cycles are 6-choosable }}

 \author{Qingqin Wu\\
 \normalsize School of Data Sciences, Zhejiang  University of Finance \& Economics, Hangzhou\\
 \normalsize 310018, China
 \and
 Yiqiao Wang\thanks{Corresponding author. Email: yqwang@bjut.edu.cn;
 Research supported by NSFC (No.\,12422113).}\\
 \normalsize School of Mathematics, Statistics and Mechanics, Beijing University of Technology,\\
 \normalsize Beijing 100124, China}
 \maketitle

\maketitle

%
\begin{abstract}

\baselineskip=13 pt
A graph is 1-planar if it can be drawn in the plane so that each edge is crossed by at most one other edge.
A graph  is  $k$-degenerate if each of its subgraphs contains a vertex of degree at most $k$.
It was known that every 1-planar graph is 8-choosable.
In this paper, we show that every 1-planar graph without 6-cycles is 5-degenerate and hence 6-choosable.
\bigskip

\noindent {\bf Keywords:} 1-planar graph; choosability; degeneracy; cycle

\end{abstract}
%

\baselineskip=17 pt

\section{Introduction}

Only simple graphs are considered in this paper. Given a graph $G$, let $V(G)$, $E(G)$, $\Delta(G)$ and $\delta(G)$ denote its vertex set, edge set, maximum degree and minimum degree, respectively. For a vertex $v\in V(G)$, let $d_G(v)$ denote the degree of $v$ in $G$.
We say that $v$ is a {\em $k$-vertex}, {\em $k^+$-vertex}, or {\em $k^-$-vertex} if $d_G(v)=k$, $d_G(v)\geq k$, or $d_G(v)\leq k$.
Let $C_k$ denote a cycle of length $k$ in $G$.
A {\em trail} is a walk that traverses each edge at most once.
Call a set $S$ {\em full} if no two elements in $S$ are the same.
For $X\subseteq E(G)$, we use $G[X]$ to denote the subgraph of $G$ induced by $X$.
For two nonnegative integers $p,q$, we use $[p,q]$ to denote the set of all
integers between $p$ and $q$ (including $p$ and $q$).

A proper coloring of a graph $G$ is a mapping $\phi$ from $V(G)$ to the color set $\{1,2,\ldots,k\}$ such that
$\phi(x)\ne \phi(y)$ for every pair of adjacent vertices $x$ and $y$ of $G$.  We say that $L$
is a list assignment for the graph $G$ if it assigns a list $L(v)$ of possible colors to
each vertex $v$ of $G$. If $G$ has a proper coloring $\phi$ such that $\phi(v) \in L(v)$ for all vertices $v$, then we say that
$G$ is {\em $L$-colorable}.  The graph $G$ is $k$-{\em choosable} if it is $L$-colorable for every assignment $L$ satisfying
$|L(v)|\ge k$  for every vertex $v$. A graph $G$ is $k$-{\em degenerate} if every subgraph $H$ of $G$ contains a vertex of degree at most $k$ in $H$. Obviously, every $d$-degenerate graph is $(d+1)$-choosable.

A {\em planar graph} is a graph which can be embedded in the plane such that any two  edges intersect only at their ends.
Such a drawing is called a {\em plane graph}. Thomassen \cite{thom} showed that every planar graph is 5-choosable, whereas Voigt \cite{voi} constructed a planar graph on 238 vertices that is not 4-choosable.
Since a planar graph without 3-cycles is 3-degenerate,  it is 4-choosable.
Moreover, it was shown that every planar graph without $k$-cycles is 4-choosable for $k = 4$ in \cite{lam},  for $k = 5$ in \cite{wang2}, for $k = 6$ in \cite{fij}, and  for $k = 7$ in \cite{far}.

A {\em $1$-planar graph} is a graph that can be drawn in the plane such that each edge crosses at most one other edge.
A {\em $1$-plane graph} means such a drawing of a 1-planar graph. Fabrici and Madaras \cite{fa} showed that every 1-planar graph $G$ satisfies $|E(G)|\le 4|V(G)|-8$, and this bound is attainable. We say that $G$ is {\em optimal} if $|E(G)|=4|V(G)|-8$,
and {\em IC-planar} if every vertex is incident with at most one crossing edge.

Borodin \cite{boro} proved that 1-planar graphs are 6-colorable, and
Kr\'{a}${\rm l}'$ and Stacho \cite{kral} proved that IC-planar graphs are 5-colorable. Both results are the best possible.
Since 1-planar graphs are 7-degenerate, they are 8-choosable. A result in \cite{wl} implies that
optimal 1-planar graphs are 7-choosable. Yang et al.\cite{yang} showed that IC-planar graphs are 6-choosable.
Dvo\v{r}\'{a}k,   Lidick\'{y} and \v{S}krekovski \cite{dvo1} showed that every graph with at most two crossings is 5-choosable.
A strong result, due to Dvo\v{r}\'{a}k et al.\cite{dvo2}, asserts that every IC-planar graph with  the distance between every pair of crossings at least 15 is 5-choosable.

It is conjectured in \cite{yang} that every 1-planar graph is 6-choosable.
If this conjecture were true, then it would strengthen the Borodin's result in \cite{boro}.
Fabrici and Madaras \cite{fa} showed that every  1-planar graph without 3-cycles is 5-degenerate.
Hud\'{a}k and Madaras \cite{hud}  showed that every 1-planar graph without 4-cycles is $5$-degenerate.
Wu, Wang and Kong \cite{wu3} showed that every 1-planar graph without 5-cycles is $5$-degenerate.
These results imply immediately the following:

\begin{theorem}\label{thm1}  For a fixed $k\in [3,5]$, every $1$-planar graph without $k$-cycles
is $6$-choosable.
\end{theorem}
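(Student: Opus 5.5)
The plan is to combine the three cited degeneracy results with the standard fact that $d$-degeneracy implies $(d+1)$-choosability. Fix $k\in[3,5]$ and let $G$ be a $1$-planar graph without $k$-cycles. The three cited results give:
\begin{itemize}
\item for $k=3$, $G$ is $5$-degenerate by Fabrici and Madaras \cite{fa};
\item for $k=4$, $G$ is $5$-degenerate by Hud\'{a}k and Madaras \cite{hud};
\item for $k=5$, $G$ is $5$-degenerate by Wu, Wang and Kong \cite{wu3}.
\end{itemize}
So in every case $G$ is $5$-degenerate, and it remains to pass from $5$-degeneracy to $6$-choosability.

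For this step I would argue by induction on $|V(G)|$, with the empty graph as the trivial base. Let $L$ be a list assignment with $|L(v)|\ge 6$ for all $v$. By degeneracy, applied to $H=G$ itself, there is a vertex $v$ with $d_G(v)\le 5$. The graph $G-v$ is a subgraph of $G$, so it is again $5$-degenerate. (It is also $1$-planar without $k$-cycles, but we only need degeneracy, which is hereditary by definition.) By induction, $G-v$ has a proper $L$-coloring $\phi$. The neighbors of $v$ use at most $5$ colors, so some color of $L(v)$ is unused among them. Assigning it to $v$ extends $\phi$ to a proper $L$-coloring of $G$.

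Equivalently, one can order the vertices $v_1,\dots,v_n$ so that each $v_i$ has at most $5$ neighbors among $v_1,\dots,v_{i-1}$. This ordering is obtained by repeatedly deleting a vertex of minimum degree and reversing the deletion order. One then colors greedily along it.

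There is no real obstacle here: all the substantive work lies in the three cited degeneracy theorems. The only point to check is that the degeneracy hypothesis is used for the whole graph, which holds because the property ``$1$-planar without $k$-cycles'' is closed under taking subgraphs. In particular, the cited results apply to every subgraph, so the degeneracy definition as stated in the paper is satisfied.
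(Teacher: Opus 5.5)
Your proposal is correct and matches the paper's argument. The paper also obtains Theorem~1 immediately by combining the three cited $5$-degeneracy results for $k=3,4,5$ with the standard fact that $d$-degenerate graphs are $(d+1)$-choosable; your inductive (greedy) coloring argument just spells out that standard fact, which the paper takes as obvious.
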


In this paper, we are going to extend this result to the case of $k=6$, i.e., we will prove the following:

\begin{theorem}\label{thm2} Every $1$-planar graph without $6$-cycles is $6$-choosable.
\end{theorem}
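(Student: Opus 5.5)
We reduce Theorem~\ref{thm2} to a degeneracy statement: we show that every $1$-planar graph without $6$-cycles is $5$-degenerate. Since a $5$-degenerate graph is $6$-choosable, this gives the theorem. Both the $1$-planarity and the absence of $6$-cycles pass to subgraphs, so it suffices to show $\delta(G)\le 5$ for every such graph $G$. We argue by contradiction. Let $G$ be a $1$-planar graph without $6$-cycles and with $\delta(G)\ge 6$. Fix a $1$-plane drawing of $G$ that minimizes the number of crossings. In particular, two edges sharing an endpoint never cross, and any crossing pair $ab$, $cd$ could be uncrossed if the local configuration allowed it. Form the associated plane graph $G^\times$ by turning each crossing into a new $4$-vertex, called a false vertex. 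The true vertices keep their degrees, so each has degree at least $6$ in $G^\times$.

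We then apply discharging on $G^\times$ with face-charging weights. Give each vertex $v$ the charge $d(v)-4$ and each face $f$ the charge $d(f)-4$. By Euler's formula the total charge is $-8$. False vertices have charge $0$, and true vertices have charge at least $2$. Only the $3$-faces are negative, each with charge $-1$. The goal is to redistribute charge so that every element ends with nonnegative charge, which contradicts the total of $-8$. The natural rule is that each true vertex sends charge to its incident $3$-faces, possibly passing charge through a false vertex to a triangle that contains a crossing point.

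The heart of the proof is bounding how many $3$-faces can surround a true vertex $v$ of degree $d\ge 6$, and this is where the no-$C_6$ hypothesis is used. If $v$ is incident with many consecutive triangular faces in $G^\times$, the neighbours of $v$ in $G$ yield consecutive adjacencies. Around a vertex, a chain of triangles $vu_1u_2,\ vu_2u_3,\dots$ creates cycles $vu_1u_2\cdots u_k v$ of every length up to $k+1$. Triangles that contain a false vertex instead come from crossing edges; such a crossing edge $ab$ still yields a path of length $2$ through $v$, or, via the kite structure, adjacencies among the four endpoints. We would prove a structural lemma stating that in $G$ no $5$ true triangles (or suitable "fans" including crossing configurations) occur consecutively around $v$, since otherwise a $6$-cycle appears. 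A lemma is also needed for the $K_4$-like and kite configurations, because two triangles sharing an edge together with one further neighbour already form cycles of length $4$ and $5$, and extending by one more edge gives $C_6$. From these lemmas, a $d$-vertex with $d\ge 6$ is incident with at most roughly $\tfrac{4}{5}d$ triangular faces, counting false triangles. Each false triangle is shared with the false vertex, and its missing charge can be split. We would then choose rules such as ``$v$ sends $\tfrac12$ to each incident $3$-face, and a false vertex forwards its unused charge,'' and verify that $d-4-\tfrac12 t(v)\ge 0$ with $t(v)\le d$ whenever $d\ge 8$. The degrees $6$ and $7$ need the sharper count from the structural lemmas. Each $3$-face receives $\tfrac12$ from each of its true vertices, so a face with two or three true vertices ends at $\ge 0$. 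The main remaining danger is a $3$-face with only one true vertex, which would require two false vertices on a triangle and is impossible in a 1-plane drawing with $\delta\ge 6$ and minimal crossings. Faces with exactly two true vertices must be checked to get a full $1$.

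I expect the main obstacle to be the case analysis for $6$- and $7$-vertices. Their charge ($2$ or $3$) is tight, and the interplay of crossing edges with the local triangles allows many configurations. Finding a $6$-cycle in each dense configuration requires checking that the constructed cycles use distinct true vertices. If the simple rules fail, I would first try letting large faces ($5^+$-faces, with charge $\ge 1$) send charge to adjacent $3$-faces through shared edges, and, failing that, let $8^+$-vertices give extra charge to adjacent small vertices via the false vertices.
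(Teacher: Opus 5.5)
Your reduction matches the paper's: $1$-planarity and $C_6$-freeness pass to subgraphs, so Theorem~\ref{thm2} follows from $\delta(G)\le 5$ (Theorem~\ref{thm3}), and you set up the same discharging on the associated plane graph with charges $d-4$. The gap is in the step that does all the work. Your rule (each true vertex sends $\frac12$ to every incident $3$-face) needs every $6$-vertex to lie on at most four $3$-faces of $G^\times$, and every $7$-vertex on at most six. You plan to get this from a local lemma (no long fan of triangles at $v$, plus ``kite'' adjacencies at crossings), and neither ingredient is available. Crossing-minimality does not make the four endpoints of a crossing pairwise adjacent, and you cannot add the missing edges without risking $6$-cycles.

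Without kites, the bound is not a local fact. Let $v$ have rotation $t_1,c_1,t_2,t_3,c_2,t_4$ in $G^\times$, where $c_1$ is the crossing of $vw_1$ with $t_1t_2$, $c_2$ is the crossing of $vw_2$ with $t_3t_4$, and $t_2t_3$, $t_4t_1$ are uncrossed. All six faces at $v$ are $3$-faces. Yet if $w_1,w_2$ have no other neighbours in $N[v]$, the longest cycle in $G[N[v]]$ has length $5$. Your fan argument needs five distinct linked true neighbours, but the crossed edges close the fan up on only four. A kite edge such as $w_1t_1$ would immediately give the $6$-cycle $vw_1t_1t_4t_3t_2v$, which is exactly why kites cannot be assumed. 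The rotation $t_1,c_1,t_2,c_2,t_3,c_3,t_4$ likewise gives a $7$-vertex on seven $3$-faces. Under your rule these vertices end at $-1$ and $-\frac12$, and your fallbacks (charge from $5^+$-faces, help from $8^+$-vertices) are not shown to repair this. The paper meets exactly such vertices and does not exclude them; see e.g.\ $d(x)=6$, $m_{4^+}(x)=0$ in Case~2.2.2 of the proof of Theorem~\ref{X_2}.

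The paper instead lets each internal $6^+$-vertex split $d(v)-4$ \emph{equally} among its incident $3$-faces, so a $6$-vertex may give only $\frac13$. It then recovers the deficit elsewhere. $5^+$-faces feed adjacent false $3$-faces, and true $3$-faces forward their surplus. The false $3$-faces at each crossing are pooled into one group $T_v$, and saturated groups pass surplus to adjacent unsaturated ones. Nonnegativity of each $T_v$ is proved by a long case analysis on the four true neighbours of the crossing and their surroundings, in which every configuration short of charge yields an explicit $6$-cycle.

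To certify that the six vertices of those cycles are distinct, the paper first passes to an end block and then to the interior of a minimal separating true $4^-$-cycle; this is what gives Claim~\ref{3path}. The resulting low-degree boundary vertices are handled by rule (R0) and the estimate of Lemma~\ref{C*}. Your plan makes every element of $G$ itself nonnegative and so has no such distinctness tool. That is a second missing piece, alongside the missing charge for $6$- and $7$-vertices.
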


Since any subgraph of a 1-planar graph without 6-cycles is a 1-planar graph without 6-cycles,
instead of showing Theorem \ref{thm2}, we prove the following stronger result:

\begin{theorem}\label{thm3}
Every $1$-planar graph without $6$-cycles has $\delta(G)\le 5$.
\end{theorem}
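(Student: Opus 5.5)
We argue by contradiction and use discharging. Suppose $G$ is a $1$-planar graph without $6$-cycles with $\delta(G)\ge 6$. Fix a $1$-plane drawing of $G$ with the minimum number of crossings. Form the associated plane graph $G^\times$ by turning each crossing point into a new \emph{false} vertex of degree $4$; the original vertices are \emph{true} vertices. Minimality of crossings gives the usual facts. No two edges sharing an endpoint cross. In $G^\times$, every face is incident with at least one true vertex. Two false vertices are never adjacent, and a face of size $3$ has at most one false vertex.

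By Euler's formula,
\begin{equation*}
\sum_{v\in V(G^\times)}(d(v)-4)+\sum_{f\in F(G^\times)}(d(f)-4)=-8 .
\end{equation*}
False vertices and $4^+$-faces have nonnegative initial charge. A true vertex $v$ has charge $d(v)-4\ge 2$. Only $3$-faces are negative, each with charge $-1$. The plan is to move charge from true vertices to incident $3$-faces, and from large faces and false vertices where needed, so that every element ends nonnegative. That contradicts the total $-8$.

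The structural input is the absence of $6$-cycles in $G$. A $3$-face of $G^\times$ with three true vertices is a triangle of $G$. A $3$-face with one false vertex $c$ and true vertices $u,v$ comes from two crossing edges $ux$ and $vy$ together with the edge $uv$. I would first prove a list of forbidden configurations using $C_6$-freeness:
\begin{itemize}
\item no two triangles of $G$ share an edge in a way that, together with a further adjacent triangle, yields a $6$-cycle;
\item no three consecutive $3$-faces around a true vertex have all their rim vertices distinct;
\item at a crossing $c$ of $ux$ and $vy$, the $K_4$-like structure on $\{u,v,x,y\}$ limits how many of the four faces at $c$ can be $3$-faces whose third sides are edges of $G$ without creating a $C_6$ using neighbouring triangles.
\end{itemize}
The key counting lemma should be this: among the faces incident with a true vertex $v$ of degree $d$, a long run of consecutive $3$-faces forces a $6$-cycle through $v$, via paths $v,w_1,\dots,w_5$ along the rim of the fan. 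So runs of $3$-faces around $v$ have bounded length, with length at most $4$ in the best case, and are separated by $4^+$-faces.

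The rules I would try are as follows.
\begin{itemize}
\item \textbf{(R1)} Each true vertex sends $\tfrac12$ to each incident $3$-face and $\tfrac13$-type amounts depending on the number of false vertices on the face.
\item \textbf{(R2)} Each $5^+$-face sends its surplus $\frac{d(f)-4}{d(f)}$ to each incident true vertex, which passes it on to adjacent $3$-faces.
\item \textbf{(R3)} Each false vertex, with charge $0$, receives nothing.
\end{itemize}
If fans are bounded, a $6$-vertex has charge $2$ but may be incident with up to six $3$-faces, each needing a total of $1$. A $3$-face with all-true vertices then needs $\tfrac13$ from each vertex, and one with a false vertex needs $\tfrac12$ from each of its two true vertices. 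So the verification reduces to showing the following: a $6$-vertex (and a $7$-vertex) cannot have too many incident $3$-faces of the false-vertex type, and it has enough $4^+$-faces in its rotation. I would carry this out by case analysis on the cyclic sequence of face sizes and false/true neighbours around $v$, using the forbidden configurations.

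I expect the main obstacle to be the $6$- and $7$-vertices whose rotations alternate false vertices and $3$-faces. In the planar setting such a rotation would be impossible, but in $G^\times$ a crossing wheel around $v$ lifts to triangles $vu_iu_{i+1}$ in $G$ through crossing edges. Here I would try first to find explicit $6$-cycles in $G$ of the form $v\,u_1\,x\,u_2\,u_3\,u_4$ using the crossed edges. Failing that, I would strengthen the rules so that $4$-faces containing a false vertex (which are essentially $4$-cycles of $G$) send charge back through the false vertex. The latter needs a careful redistribution at false vertices, where incident $3$-faces may each demand from both endpoints of the crossing pair.
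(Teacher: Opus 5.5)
Your proposal is a plan rather than a proof, and the step you defer is the main difficulty. The run-length lemma you rely on works only when every rim vertex of the fan is true. In that case four consecutive $3$-faces $[vu_1u_2],\dots,[vu_4u_5]$ do give the $6$-cycle $vu_1u_2u_3u_4u_5v$. Each false rim vertex, however, shortens the rim in $G$.

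For example, take a $6$-vertex $v$ whose six incident faces are all $3$-faces, with rim $u_1c_2u_3c_4u_5c_6$ where the $c_i$ are false. This lifts to a $K_4$ on $\{v,u_1,u_3,u_5\}$, whose edges $u_1u_3,u_3u_5,u_5u_1$ are the crossing edges through $c_2,c_4,c_6$, plus three crossed edges $va_2,va_4,va_6$. The $a_i$ are pendant there, so this subgraph has no $6$-cycle; whether one appears depends on the faces at the crossings $c_i$, not on the fan of $v$.

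So nothing local at $v$ prevents six incident false $3$-faces. Your (R1) then makes $v$ send $3>2=d(v)-4$. The reduction you announce, that a $6$- or $7$-vertex ``cannot have too many incident $3$-faces of the false-vertex type'', is not provable vertex by vertex. If you split $d(v)-4$ equally instead, such a face gets only $\frac13+\frac13$ from its two true vertices, and the missing $\frac13$ must come from outside the fan of $v$. Your fallback of having $4$-faces at the crossing send charge cannot supply it, because a $4$-face has charge $0$.

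The paper closes this gap with machinery your outline lacks:
\begin{itemize}
\item It groups false $3$-faces by their crossing into sets $T_v$.
\item It splits cases by how many faces at the crossing are $3$-faces, and whether they are adjacent ($X_1,X_2^n,X_2^a,X_3,X_4$).
\item $5^+$-faces feed adjacent false $3$-faces by (R2).
\item By (r1)--(r2), surplus passes from saturated true $3$-faces, and from saturated groups $T_u$, to adjacent unsaturated groups.
\item Every configuration still short of charge is shown to contain an explicit $6$-cycle.
\end{itemize}

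Those $6$-cycles use second neighbours of the crossing. In an arbitrary counterexample with $\delta\ge 6$, their vertices need not be distinct. The paper secures distinctness in two steps. It first passes to an end block, and then to the interior of a minimal separating true $4^-$-cycle $C^*$. After that, the ends of a short uncrossed trail with neighbours on both sides are distinct (Claim~\ref{3path}). The price is an outer face with low-degree outer vertices. These are handled by (R0) and the bound $\sum_{x\in V^o}w'(x)+w'(f^*)\ge-\frac{239}{30}>-8$. Your outline addresses neither the distinctness problem nor this boundary accounting.
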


\section{Preliminary}

Let $G$ be a plane graph. Let $F(G)$ denote the face set of $G$.
For a face $f\in F(G)$, let $d_G(f)$ denote the degree of $f$ in $G$.
A face $f\in F(G)$ is called a {\em $k$-face}, {\em $k^+$-face}, or {\em $k^-$-face} if $d_G(f)=k$, $d_G(f)\ge k$, or $d_G(f)\le k$.
Usually, we use $\partial(f)$ to denote the boundary walk of $f$ and write $f=[u_1u_2\cdots u_k]$ if $u_1, u_2, \ldots, u_k$ are the vertices of $\partial(f)$ in cyclic order. Set $V(f)=V(\partial(f))$ and $E(f)=E(\partial(f))$.
For a face $f$ and an edge $e\in E(f)$, let $f_e$   denote the face adjacent to $f$ such that $e\in E(f)\cap E(f_e)$.

Let $H$ be a 1-plane graph such that the number of crossings is as few as possible.
Assume that $E_0(H)$ is the set of non-crossed edges in $H$. Let $X(H)$ denote the set of crossings in $H$. The {\em associated plane graph}, denoted $H^{\times}$, of $H$ is a plane graph with
$V(H^{\times})=V(H)\cup X(H)$ and $E(H^{\times})=E_0(H)\cup E_1(H)$,  where

\medskip

$E_1(H)=\{xz, zy \ |\ xy\in E(G'')\setminus E_0(H)$ and $z$ is a crossing  on $xy\}$.

\medskip

Vertices in $V(H)$ are called {\em true vertices} of $H^{\times}$, and vertices in $X(H)$ are called {\em false vertices} of $H^{\times}$.
Note that $d_{H^{\times}}(v)=d_H(v)$ for each  $v\in V(H)$, and $d_{H^{\times}}(v)=4$ for each  $v\in X(H)$.
A face is {\em false} if it is incident with a false vertex and {\em true} otherwise.
Since $H$ is 1-plane,  the following statement holds obviously:

\medskip
\noindent{\bf (P1)}\ Two false vertices are not adjacent in $H^\times$.
\medskip

For $x\in V(H^{\times})\cup F(H^{\times})$ and an integer $i\ge3$, we use $m_i(x)$ and $m_{i^+}(x)$ denote the number of $i$-faces and $i^+$-faces that are adjacent to or incident with $x$.

\section{Proof}

We say that a cycle $C$ is {\em true} if all edges of $C$ are not crossing edges.
For a true cycle $C$, we use $V_{\rm int}(C)$ and $V_{\rm ext}(C)$ to denote the set of vertices inside $C$ and outside $C$, respectively. Similarly, let $E_{\rm int}(C)$ and $E_{\rm ext}(C)$ denote the set of all edges contained inside and outside $C$, respectively. Set $E_{\rm Int}(C)=E_{\rm int}(C)\cup E(C)$, and $G_{\rm Int}(C)=G[E_{\rm Int}(C)]$.
We say that $C$ is {\em separating} if both $V_{\rm int}(C)\ne \emptyset$ and $V_{\rm ext}(C)\ne \emptyset$.

\medskip

\noindent{\bf Proof of Theorem \ref{thm3}.} Suppose that the theorem is false.
Let $G$ be a connected counterexample, which is drawn in the plane  such that the number of crossings is as few as possible.
Then $G$ is a 1-plane graph without $6$-cycles and $\delta(G)\ge 6$.
To complete the proof, we need to carry out the following operations according to their order.
\medskip

\noindent
{\bf Step 1.}\  If $G$ is 2-connected, then let $G'=G$.
Otherwise, we choose $G'$ as an end block

\ \ \ \ \ \ \ \
of $G$ which contains only one cut vertex $z^*$ of $G$.

\medskip

\noindent
{\bf Step 2.}\  If $G'$ has separating true  $4^-$-cycles, then we choose a separating
true  $4^-$-cycle

\ \ \ \ \ \ \ \ \ \
 $C^*$ with $|V_{\rm int}(C^*)|+|E_{\rm int}(C^*)|$ being as small as possible,
and let $G''=G'_{\rm Int}(C^*)$;

\ \ \ \ \ \ \ \
Otherwise,  let $G''=G'$.

\smallskip

Then $G''$ is a 2-connected 1-plane graph satisfying
  (Q1)--(Q2) below:
\medskip

\noindent
{\bf (Q1)}\ $G''$ contains no $6$-cycles;
\smallskip

\noindent
{\bf (Q2)}\ $G''$ contains no  separating true $4^-$-cycles.

\medskip

The following Claim \ref{3path} is easily derived from (Q2):

\begin{claim}\label{3path}
Assume that $P=b_0b_1\cdots b_m$ is a trail of $G''$ with $3\leq m\leq4$ and all edges of $P$ are not crossing edges.
For $i\in [1,m-1]$,   let the neighbors of $b_i$ be $b_{i-1},y_{i_1},\ldots,y_{i_{p_i}},b_{i+1},$ $z_{i_1},\ldots,z_{i_{q_i}}$ in clockwise order,
as shown in {\rm Fig.\,\ref{fig1}}.
Define
$$Y=(\mathop\cup\limits_{i=1}^{m-1}\{y_{i_1},\ldots,y_{i_{p_i}}\})\backslash V(P),$$
$$Z=(\mathop\cup\limits_{i=1}^{m-1}\{z_{i_1},\ldots,z_{i_{q_i}}\})\backslash V(P).$$
If $|Y|,|Z|\geq1$, then $b_0\ne b_m$.
\end{claim}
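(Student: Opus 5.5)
We argue by contradiction. Suppose $b_0=b_m$. Then $P$ is a closed trail of length $m\in\{3,4\}$ consisting only of non-crossing edges. Since $G''$ is simple, a closed trail of length $3$ is a triangle. A closed trail of length $4$ with $b_0=b_4$ either is a $4$-cycle or repeats a vertex, $b_1=b_3$. In the latter case the trail $b_0b_1b_2b_1b_0$ would use the edge $b_0b_1$ twice, which is impossible. Hence $C=b_0b_1\cdots b_{m-1}b_0$ is a true $3$- or $4$-cycle. Being true, it is a Jordan curve in the drawing that no edge of $G''$ crosses. Every edge either lies on $C$, or has its interior entirely inside $C$, or entirely outside $C$.

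Next we place the sets $Y$ and $Z$ relative to $C$. Traverse $C$ in the direction $b_0\to b_1\to\cdots$. At each internal vertex $b_i$ with $1\le i\le m-1$, the edges $b_ib_{i-1}$ and $b_ib_{i+1}$ split the small disk around $b_i$ into two sectors. The neighbors $y_{i_1},\dots,y_{i_{p_i}}$ lie between $b_{i-1}$ and $b_{i+1}$ in clockwise order, so they all leave $b_i$ into one sector, and the $z$'s leave into the other. Because the rotation is taken consistently along the oriented cycle, one fixed side of $C$ (say, the right side) corresponds to the $y$-sectors at every $b_i$, and the other side to the $z$-sectors. So each edge $b_iy$ with $y\in Y$ starts into one region, say the interior of $C$. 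This edge is not an edge of $C$, since $y\notin V(P)$, and it is not crossed by $C$. Hence the whole edge, and in particular $y$, lies inside $C$, and $y\notin V(C)$. Likewise each $z\in Z$ lies outside $C$. Thus $|Y|,|Z|\ge1$ gives $V_{\rm int}(C)\neq\emptyset$ and $V_{\rm ext}(C)\neq\emptyset$, so $C$ is a separating true $4^-$-cycle. This contradicts (Q2), and therefore $b_0\neq b_m$.

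The step most likely to need care is the consistency of sides: we must check that "clockwise from $b_{i-1}$ to $b_{i+1}$" picks the same side of $C$ at every $b_i$. This follows because the cyclic order is taken with respect to the orientation of the trail, so locally the $y$-sector is always the sector to the right of the directed path.

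A second point concerns the vertex $b_0=b_m$ itself. Its neighbors do not enter $Y$ or $Z$, because the definition only ranges over $i\in[1,m-1]$, so no consistency issue arises there.

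A third point is that a neighbor listed among the $y$'s might itself be a vertex of $P$ (a chord of $C$). Such neighbors are excluded by the $\setminus V(P)$ in the definitions of $Y$ and $Z$, so they cause no trouble.
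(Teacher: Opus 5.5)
Your argument is correct and is exactly the elaboration of the paper's one-line justification that the claim is ``easily derived from (Q2)'': if $b_0=b_m$, the trail closes into a true $3$- or $4$-cycle with true vertices strictly on both its $Y$-side and its $Z$-side, contradicting (Q2). There are two cosmetic slips, neither affecting the argument: for $m=4$ you should also exclude $b_0=b_2$, which likewise reuses the edge $b_0b_1$; and with the clockwise convention the $y$-sector lies to the left of the directed trail, not the right, which is harmless since only the consistency of the side matters.
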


    \begin{figure}[H]
 	\centering
 \epsfig{file=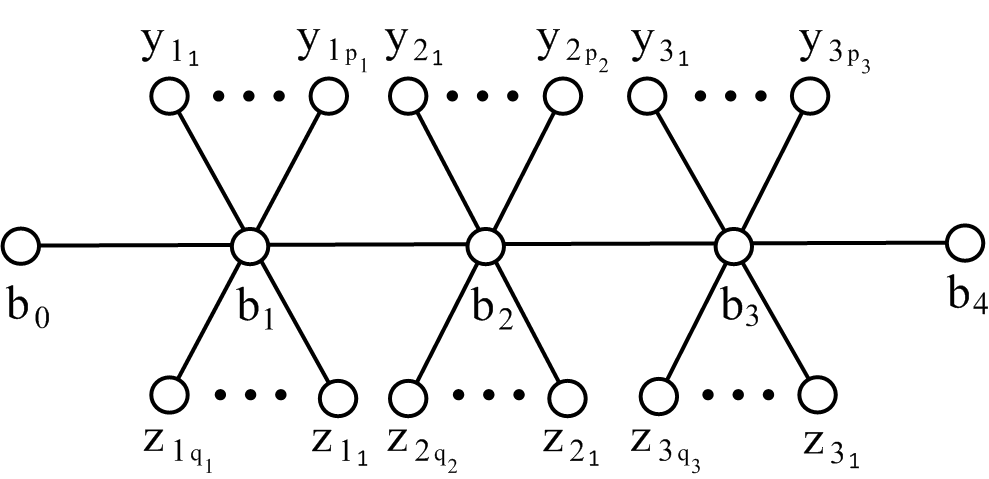, width=9cm}
 	\caption{ A trail $P=b_0b_1b_2b_3b_4$ described  in Claim \ref{3path}.}
 	\label{fig1}
 \end{figure}

In this and following figures, we  use  $\otimes$, $\bullet$,  and $\circ$   to denote a false vertex, a true vertex of known degree, and a vertex of unknown degree,  respectively.

Let $H$  denote the associated plane graph of $G''$.
Let $V^o=V(C^*)$ if $C^*$ exists, and $V^o=\{z^*\}$ if
$z^*$ exists, but $C^*$ does not exist. Let $V^i=V(H)\setminus V^o$.
Vertices in $V^i$ and vertices in $V^o$ are called {\em internal vertices}
and {\em outer vertices} of $H$, respectively.
Let $f^*$ denote the {\em outer face} of $H$ if $C^*$ exists, and let $F^i=F(H)
\setminus \{f^*\}$. The faces in $F^i$ are called {\em internal
faces} of $H$.
It is easy to see that $H$ is a 2-connected plane graph with the following properties (Q3)--(Q4):

\medskip
\noindent
{\bf (Q3)}\ If $C^*$ exists, then $3\leq d_{H}(f^*)\leq 4$ and $d_{H}(v)\geq2$ for each $v\in V(C^*)$.
If $C^*$ does

\ \ \ \ \,not exist, then $d_{H}(z^*)\ge2$.

\smallskip
\noindent
{\bf (Q4)}\ $d_{H}(v) = d_G(v)\geq6$  for a true internal vertex $v$, and $d_{H}(v)=4$ for a false vertex $v$.

\medskip

Let $w$ denote an initial  weight function defined by $w(x)=d_{H}(x)-4$ for
every $v\in V(H)\cup F(H)$.  Then
the total sum of weights is equal to $-8$. We are going to design some
discharging rules and redistribute weights according to them. Once
the discharging is finished, a new weight function $w'$  is
produced. However, the total sum of weights is kept fixed when the
discharging is in process.
On the other hand, we can show that
$w'(x)\geq0$ for all $x\in V^i\cup F^i$ and
$\sum\limits_{x\in V^o}w'(x)+w'(f^*)\geq-\frac{239}{30}$,
which  leads to an obvious contradiction below:
$$
-\frac{239}{30}\leq \sum_{x\in V(H)\cup F(H)}w'(x) = \sum_{x\in V(H)\cup F(H)}w(x)=-8,
$$
and henceforth the proof is complete.

\medskip

For $x,y\in V(H)\cup F(H)$, we  use $\tau(x \to y)$ to denote the sum
of weights discharged from $x$ to $y$ according to the
following rules.
\medskip

{\bf (R0)}\ Let $v\in V^o$ and $f$ be an internal 3-face incident with $v$.
\smallskip

{\bf (R0.1)}\ If $V^o=\{z^*\}$, then $\tau(z^*\rightarrow f)=1$.
\smallskip

{\bf (R0.2)}\ Suppose that $V^o=V(C^*)$. Then $\tau(v\rightarrow f)=\frac{1}{2}$ if $f$ is incident with two outer vertices and $\tau(v\rightarrow f)=\frac{14}{15}$ otherwise.
\smallskip

{\bf (R0.3)}\ Let $g$ be an internal $4^+$-face  which is  adjacent to $f$ and incident with $v$. Then $v$ sends additional $\frac{7}{15}$ to $f$ through $g$.
\smallskip

{\bf (R1)}\ Every internal $6^+$-vertex  $v$ divides equally    $d_{H}(v)-4$ to each incident 3-face.
\smallskip

{\bf (R2)}\ Every $5^+$-face $f$ divides equally $d_{H}(f)-4$ to each adjacent false 3-face

\medskip

For an internal 3-face $f$, let $\sigma(f)$ denote the resultant weight of $f$ after (R0)--(R2) are carried out.
We say that  $f$ is  {\em saturated}  if $\sigma(f)\ge 0$.
Otherwise, $f$ is {\em unsaturated}.

Assume that $v$ is a false vertex of $H$.
Let $T_v$ denote the set of  false 3-faces incident with $v$ in $H$.
The weight of  $T_v$ is defined to be the sum of weights of elements in $T_v$.
Since no two false vertices are adjacent by (P1), every false 3-face belongs to only one $T_v$ for some
false vertex $v$.
Similarly, let $\sigma(T_v)$ denote the resultant weight of $T_v$ after (R0)--(R2) are implemented.
Then we define that $T_v$ is  {\em saturated}  if $\sigma(T_v)\ge 0$. Otherwise, $T_v$ is {\em unsaturated}.

For two false 3-face $f\in T_v$  and $g\in T_u$, we call that $T_v$ and $T_u$ are adjacent if $f$ is adjacent to $g$ in $H$.
Moreover,  we use $\Lambda(T_v)$ to denote the set of $T_u$ which are unsaturated and adjacent to $T_v$.  Let $\lambda(T_v)=|\Lambda(T_v)|$. Obviously, $0\leq\lambda(T_v)\leq4$.

\medskip
Then we carry out the following additional rules:
\medskip

{\bf (r1)}\ For a true 3-face $f\in F^i$ with $\sigma(f)>0$, we devide equally $\sigma(f)$ to each adjacent false 3-face.
\smallskip

{\bf (r2)}\
If $\sigma(T_v)>0$, then we divide equally $\sigma(T_v)$ to the elements in $\Lambda(T_v)$.
\medskip

Let $w'$ denote the  final weight function of $H$ after (R0)--(R2) and  (r1)--(r2) are performed.
From now on, we write simply $d(x)=d_{H}(x)$ for every $x\in V(H)\cup F(H)$.
The following Observation \ref{observation} holds automatically.

\begin{observation}\label{observation}
No $2$-vertex in $V(C^*)$ is incident with an internal $3$-face.
An internal $3$-face  is  incident with at most two outer vertices.
\end{observation}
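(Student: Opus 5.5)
The two assertions of Observation~\ref{observation} only apply when $C^*$ exists, since otherwise there is at most one outer vertex $z^*$ and there are no $2$-vertices on $C^*$ to speak of. So we will assume $V^o=V(C^*)$ with $C^*$ a true cycle of length $3$ or $4$, and use the minimality built into Step~2 together with (Q2) and (Q3).

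\emph{First assertion.} Let $v\in V(C^*)$ with $d_H(v)=2$. Its two neighbours in $H$ are then exactly its two neighbours $u,w$ on $C^*$, and the edges $vu,vw$ are non-crossing. There are two faces of $H$ at $v$: the outer face $f^*$ and one internal face $g$, whose boundary contains the path $uvw$. Suppose $g$ is a $3$-face. Then $g=[uvw]$ and $uw$ is an edge of $H$, hence a true non-crossed edge of $G''$.
\begin{itemize}
\item If $|C^*|=3$, then $uw\in E(C^*)$, so $H$ consists of the triangle alone. Then $V_{\rm int}(C^*)=\emptyset$, contradicting that $C^*$ is separating.
\item If $|C^*|=4$, say $C^*=vuxw$, then $uw$ is a chord. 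The true triangle $uwx$ would then contain all of $V_{\rm int}(C^*)$, which is nonempty, with strictly smaller $|V_{\rm int}|+|E_{\rm int}|$. This contradicts the choice of $C^*$. The same holds if the chord is drawn differently: $uxw$ together with $uw$ gives a separating true $3$-cycle inside $C^*$.
\end{itemize}

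\emph{Second assertion.} Suppose an internal $3$-face $f$ has all three vertices on $C^*$.
\begin{itemize}
\item If $|C^*|=3$, then $f$ is bounded by the three edges of $C^*$ and lies inside $C^*$. So $f$ is the whole interior, again giving $V_{\rm int}(C^*)=\emptyset$, a contradiction.
\item If $|C^*|=4$, then $f$ uses two consecutive edges of $C^*$ plus a chord, say $f=[abc]$ with $C^*=abcd$ and chord $ac$. The triangle $acd$ is a true cycle. Since $f$ is a face, every interior vertex of $C^*$ lies inside $acd$. So $acd$ is a separating true $3$-cycle with smaller interior, contradicting minimality.
\end{itemize}
Note that no vertex of $f$ can be a false vertex lying on $C^*$, since $C^*$ is true. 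Also, $f$ need not consist solely of edges of $C^*$.

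\emph{Main obstacle.} The delicate point is justifying that the chord triangle strictly reduces $|V_{\rm int}|+|E_{\rm int}|$ while remaining separating. This needs two facts:
\begin{itemize}
\item the chord is non-crossed, which holds because it bounds a face of $H$ made of true vertices;
\item its exterior is nonempty, which holds because $C^*$ was separating in $G'$.
\end{itemize}
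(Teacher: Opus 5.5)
Your argument is correct and is the reasoning the paper leaves implicit when it says the observation ``holds automatically''. For $|C^*|=3$, such a face would force $V_{\rm int}(C^*)=\emptyset$. For $|C^*|=4$, the non-crossed chord would cut off a separating true $3$-cycle with smaller $|V_{\rm int}|+|E_{\rm int}|$, contradicting the choice of $C^*$ (equivalently, violating (Q2) inside $G''$).
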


\begin{lemma}\label{C*}
$\eta:=\sum\limits_{x\in V^o}w'(x)+w'(f^*)\geq-\frac{239}{30}$.
\end{lemma}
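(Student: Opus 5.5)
The plan is to use the fact that outer vertices only \emph{send} weight, and only by (R0). Rules (R1), (R2), (r1), (r2) move weight from internal $6^+$-vertices, $5^+$-faces and $3$-faces to internal $3$-faces, and never to or from $V^o$ or $f^*$. Hence $w'(f^*)=w(f^*)=d(f^*)-4$, and for $v\in V^o$ we have $w'(v)=d(v)-4-\tau_0(v)$, where $\tau_0(v)$ is the total that $v$ sends by (R0). The whole proof therefore reduces to a local upper bound on $\tau_0(v)$ in terms of $d(v)$.

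\textbf{Case $V^o=\{z^*\}$.} Here there is no outer face and $\eta=w'(z^*)$. I would charge the sending of $z^*$ to its $d(z^*)$ incident faces. An internal $3$-face receives $1$ by (R0.1). Each transfer of (R0.3) through a $4^+$-face $g$ crosses one of the two edges of $g$ at $z^*$, so $g$ carries at most $2\cdot\frac{7}{15}=\frac{14}{15}$. Thus $\tau_0(z^*)\le d(z^*)$, so $\eta\ge -4>-\frac{239}{30}$.

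\textbf{Case $C^*$ exists, $k=|C^*|\in\{3,4\}$.} Let $v\in V^o$ with $d=d(v)$. Around $v$ list the internal faces $f_1,\dots,f_{d-1}$ in order, where $f_1$ and $f_{d-1}$ contain the two edges of $C^*$ at $v$. If $d=2$, Observation~\ref{observation} shows that $v$ lies on no internal $3$-face. Then (R0.3) cannot fire either, since it requires a $3$-face incident with $v$. So $w'(v)=-2$.

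For $d\ge 3$ I would assign each (R0.3) transfer of $\frac{7}{15}$ to the $4^+$-face it passes through. Such a transfer crosses an edge $vx$ shared with a $3$-face, and the $C^*$-edges border $f^*$, which is not internal. Hence an interior face $f_i$ ($2\le i\le d-2$) is charged at most $\frac{14}{15}$, whether it is a $3$-face or a $4^+$-face. The end faces $f_1$ and $f_{d-1}$ are each charged at most $\max\{\frac12,\frac{7}{15}\}=\frac12$; a $3$-face there is incident with two outer vertices, so it gets $\frac12$ by (R0.2). Therefore $\tau_0(v)\le \frac{14}{15}(d-3)+1$, and
$$w'(v)\ge d-4-\tfrac{14}{15}(d-3)-1=\tfrac{d-33}{15}.$$
This gives $w'(v)\ge -2$ for $d=3$ and $w'(v)\ge -\frac{29}{15}$ for $d\ge4$. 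So every outer vertex has $w'(v)\ge-2$.

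For $k=3$ this yields $\eta\ge -6-1=-7$, which suffices. For $k=4$ it yields $\eta\ge -8$, and any outer vertex of degree at least $4$ improves this to $\eta\ge -8+\frac1{15}>-\frac{239}{30}$.

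\textbf{Main obstacle: $k=4$ with all outer vertices of degree at most $3$.} I would first refine the bound for $d=3$. Equality $w'(v)=-2$ forces a specific pattern: both $f_1$ and $f_2$ are $3$-faces, or one is a $3$-face and the other a $4^+$-face sending $\frac{7}{15}$ through the common edge. Otherwise $w'(v)\ge -2+\frac1{30}$, and we are done.

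The obstacle is to exclude the tight configurations by combining three facts: (Q2), the absence of $6$-cycles (Q1), and $d(u)\ge6$ for internal true vertices (Q4). Take, for instance, $v$ with third neighbour $u$ and two $3$-faces $[c_1vu]$, $[uvc_2]$. Then $c_1uc_2$ together with $C^*$ yields short true cycles through $u$, which by (Q2) cannot separate. The many remaining neighbours of $u$ (since $d(u)\ge6$), combined with the other outer vertices, should then produce a $6$-cycle in $G''$ or a separating true $4^-$-cycle. The degenerate situation where all outer vertices have degree $2$ is impossible, because $G''$ is $2$-connected with internal vertices. Once one outer vertex is shown to gain slack $\ge\frac1{30}$, we obtain $\eta\ge -8+\frac1{30}=-\frac{239}{30}$. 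I expect this final structural case analysis, which Claim~\ref{3path} should help organize, to be the main work.
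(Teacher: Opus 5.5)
Your reduction to local bounds is correct and matches the paper up to the last case. That covers the $z^*$ case, $w'(x_i)\ge-2$ for every outer vertex, $w'(x_i)\ge-\frac{29}{15}$ when $d(x_i)\ge4$, and hence $\eta\ge-7$ for $|C^*|=3$ and $\eta\ge-\frac{119}{15}$ when some outer vertex has degree at least $4$.

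The gap is the remaining case, $d(f^*)=4$ with all $d(x_i)\le3$. It is the only place where the constant $-\frac{239}{30}$ is needed, and you leave it as a heuristic (``should then produce a $6$-cycle or a separating true $4^-$-cycle''). The sketch also aims at the wrong target in two ways.

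First, your list of tight patterns is wrong. A $3$-vertex $x_i$ with one internal $3$-face and one internal $4^+$-face sends at most $\frac12+\frac7{15}=\frac{29}{30}$. So $w'(x_i)\ge-\frac{59}{30}=-2+\frac1{30}$, which is already the slack you want. Equality $w'(x_i)=-2$ occurs only when both internal faces at $x_i$ are $3$-faces. Your plan would try to exclude the mixed pattern, which occurs in legitimate configurations and cannot be excluded.

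Second, you take $d(u)\ge6$ for the common third neighbour $u$, but $u$ may be a false vertex of degree $4$. Then $c_1u$ and $uc_2$ are the two halves of a crossed edge $c_1c_2$, (Q2) says nothing, and no $6$-cycle need exist; $K_4$ drawn with one crossing is the extreme example. When $u$ is true your idea does work: $ux_1x_2x_3$ is a true $4$-cycle with $x_0$ outside and, since $d(u)\ge6$, vertices inside, contradicting (Q2).

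The missing idea is a short propagation argument, with no $6$-cycle search at all.
By $2$-connectivity some $x_0$ has degree $3$. If $m_3(x_0)\le1$, then $w'(x_0)\ge-\frac{59}{30}$ and $\eta\ge-6-\frac{59}{30}=-\frac{239}{30}$.
If $m_3(x_0)=2$, then $x_1$ and $x_3$ lie on internal $3$-faces. By Observation~\ref{observation} they have degree exactly $3$, and by the same dichotomy you may assume $m_3(x_1)=m_3(x_3)=2$.
Then the four internal triangles at $x_0,x_1,x_3$ close up around a single common neighbour $u$ of all four $x_i$, with $d(u)=4$. So $G''$ is $W_4$, contradicting (Q4), or $K_4$ with $u$ a crossing, contradicting the choice of $C^*$. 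This is exactly how the paper finishes the lemma.
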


\proof
If $V^o=\{z^*\}$, then $\eta=w'(z^*)=d(z^*)-4-m_3(z^*)-2\times\frac{7}{15}m_{4^+}(z^*)\ge d(z^*)-4-d(z^*)=-4$ by (R0.1) and (R0.3).
Otherwise, $V^o=V(C^*)$ and $f^*$ exists. Let $C^*=x_0x_1\cdots x_{p-1}x_0$, where $p\in\{3,4\}$.
By (Q3), $3\le d(f^*)\le 4$ and $d(x_i)\geq2$ for  $i\in[0,p-1]$.

Let $i\in [0,p-1]$. If $d(x_i)=2$, then $w'(x_i)=w(x_i)=2-4=-2$ by Observation \ref{observation}.
If $d(x_i)\geq3$, then it is easy to see that both $(f^*)_{x_{i-1}x_i}$ and
$(f^*)_{x_{i}x_{i+1}}$ cannot be internal 3-faces which are incident with only one outer vertex.
Let $g\in\{(f^*)_{x_{i-1}x_i}, (f^*)_{x_{i}x_{i+1}}\}$.
In fact, by (R0.2)--(R0.3), $x_i$ sends $\frac{1}{2}$ to $g$ if $d(g)=3$ and sends $\frac{7}{15}$ at most once through $g$ if $d(g)\ge4$. If $d(x_i)=3$, then $w'(x_i)\ge3-4-\frac{1}{2}\times2=-2$. Otherwise, $d(x_i)\ge4$.
Let $h$ be an internal face, other than $(f^*)_{x_{i-1}x_i}$ and $(f^*)_{x_{i}x_{i+1}}$, which is incident with $x_i$.
According to (R0.2)--(R0.3), it is easy to see that $x_i$ sends at most $\frac{14}{15}$ to $h$ if $d(h)=3$ and sends $\frac{7}{15}$ at most twice through $h$ if $d(h)\ge4$.
Hence, $w'(x_i)\geq d(x_i)-4-\frac{1}{2}\times2-\frac{14}{15}(d(x_i)-3)
=\frac{1}{15}d(x_i)-\frac{11}{5}\ge-\frac{29}{15}$.

If $d(f^*)=3$, then $w'(f^*)=w(f^*)=3-4=-1$. So it follows that $\eta\geq(-2)\times3+(-1)=-7$.
Otherwise, $d(f^*)=4$ and   $w'(f^*)=w(f^*)=4-4=0$. If $d(x_i)\ge 4$ for some $i\in [0,3]$,
then $\eta\geq(-2)\times3+(-\frac{29}{15})=-\frac{119}{15}$.
Otherwise, $2\le d(x_i)\le 3$ for all $i\in[0,3]$.
Since $H$  is 2-connected, $V(C^*)$ contains at least two $3$-vertices, say  $d(x_0)=3$ by symmetry.
If $m_3(x_0)\leq1$, then $w'(x_0)\leq 3-4-\frac{1}{2}-\frac{7}{15}=-\frac{59}{30}$ by (R0.2)--(R0.3).
Therefore, $\eta\geq(-2)\times3+(-\frac{59}{30})=-\frac{239}{30}$.
Otherwise, $m_3(x_0)=2$.
Observation \ref{observation} implies that $x_1$ and $x_3$
are $3$-vertices and hence $m_3(x_1)=m_3(x_3)=2$, similarly to the above proof.
Consequently,  $d(x_i)=3$ and $m_3(x_i)=2$ for all $i\in[0,3]$.
This means that $G''$ is a complete graph $K_4$ or a wheel graph $W_4$, contradicting  the choice of $C^*$ and (Q4).
\qed

\begin{claim}\label{6+v} Let $v$ be an internal $6^+$-vertex  incident  with a $3$-face $f$.

$(1)$\ If $d(v)=6$, then $\tau(v\to f)\ge \frac 13$. Moreover, $\tau(v\to f)\ge \frac 2{5}$ if $m_{4^+}(v)\ge 1$,  $\tau(v\to f)\ge \frac 1{2}$ if $m_{4^+}(v)\ge 2$ and $\tau(v\to f)\ge \frac 2{3}$ if $m_{4^+}(v)\ge 3$.

$(2)$\ If $d(v)\ge7$, then $\tau(v\to f)\ge \frac 37$. Moreover, $\tau(v\to f)\ge \frac 1{2}$ if $m_{4^+}(v)\ge 1$, $\tau(v\to f)\ge \frac 3{5}$ if $m_{4^+}(v)\ge 2$ and $\tau(v\to f)\ge \frac 3{4}$ if $m_{4^+}(v)\ge 3$.
\end{claim}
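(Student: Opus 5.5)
By (R1), an internal $6^+$-vertex $v$ sends exactly $\frac{d(v)-4}{m_3(v)}$ to each incident $3$-face. So the plan is to bound $m_3(v)$ from above and then evaluate this fraction.

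Since $H$ is a plane graph and $v$ is internal, the faces incident with $v$ are $d(v)$ in number, and each is either a $3$-face or a $4^+$-face. Hence $m_3(v)=d(v)-m_{4^+}(v)$, which gives
$$\tau(v\to f)=\frac{d(v)-4}{d(v)-m_{4^+}(v)}.$$
This count needs two facts. First, $v$ is not a cut vertex of $H$, because $H$ is $2$-connected. Second, no face appears twice around $v$, again by $2$-connectivity of the plane graph $H$, so that the faces around $v$ are exactly $d(v)$ distinct faces. I expect this step to be the main point to check, although it is routine.

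The displayed function is increasing in $m_{4^+}(v)$ for $m_{4^+}(v)<d(v)$. It is also increasing in $d(v)$ when $m_{4^+}(v)<4$, since
$$\frac{\partial}{\partial d}\,\frac{d-4}{d-k}=\frac{4-k}{(d-k)^2}>0 \quad\text{for } k<4.$$
Therefore the lower bounds are attained at the smallest allowed value of $m_{4^+}(v)$ in each case, and in part (2) at $d(v)=7$.

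For part (1), with $d(v)=6$, we get $\tau(v\to f)=\frac{2}{6-k}$ where $k=m_{4^+}(v)$. For $k=0,1,2,3$ this equals $\frac13,\frac25,\frac12,\frac23$, respectively.

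For part (2), with $d(v)\ge7$, the value is at least $\frac{3}{7-k}$. For $k=0,1,2,3$ this equals $\frac37,\frac12,\frac35,\frac34$, respectively.

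These are exactly the claimed bounds. The conditions "$m_{4^+}(v)\ge k$" in the statement are handled by the monotonicity in $k$.
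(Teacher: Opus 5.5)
Your proof is correct and is the paper's argument: (R1) gives $\tau(v\to f)=\frac{d(v)-4}{m_3(v)}$, the paper bounds $m_3(v)\le d(v)-m_{4^+}(v)$, and it then evaluates at the thresholds, just as you do. The paper needs only this inequality, which holds in any plane graph because distinct faces at $v$ occupy distinct angles among the $d(v)$ angles at $v$; so the 2-connectivity step you flagged as the main point to check is unnecessary, though it is valid.
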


\proof
Note that $d(v)\ge6$ and  $m_3(v)\le d(v)$.

$(1)$\ Suppose that $d(v)=6$. By (R1), we have that $\tau(v\rightarrow f)=\frac{d(v)-4}{m_3(v)}\ge\frac{d(v)-4}{d(v)}\ge\frac{1}{3}$.
If $m_{4^+}(v)\ge 1$, then $\tau(v\rightarrow f)\ge\frac{d(v)-4}{d(v)-1}\ge\frac{2}{5}$ by (R1).
If $m_{4^+}(v)\ge 2$, then $\tau(v\rightarrow f)\ge\frac{d(v)-4}{d(v)-2}\ge\frac{1}{2}$ by (R1).
If $m_{4^+}(v)\ge 3$, then $\tau(v\rightarrow f)\ge\frac{d(v)-4}{d(v)-3}\ge\frac{2}{3}$ by (R1).

$(2)$\ Similar to the proof of $(1)$.
\qed

\begin{lemma}\label{internal-vertex}
If $v\in V^0$, then $w'(v)\geq0$.
\end{lemma}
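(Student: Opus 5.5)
The statement says $v\in V^0$, but Lemma \ref{C*} already handles outer vertices, so the intended set is clearly $V^i$, the internal vertices. I will prove that every internal vertex $v$ of $H$ (true or false) satisfies $w'(v)\ge 0$. The approach is to list which rules take charge out of an internal vertex and which put charge into it, and then compare with the initial weight $w(v)=d(v)-4$.

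First, no rule ever sends charge to a vertex. Rules (R0), (R1) and (R2) send charge to $3$-faces. Rules (r1) and (r2) move charge between faces, or between the classes $T_v$, which are sets of faces. So $w'(v)\le w(v)$ always, and the only question is how much $v$ gives away.

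Next, the rules that take charge from a vertex are (R0) and (R1). Rule (R0) applies only to $v\in V^o$, so it does not touch an internal vertex. Rule (R1) applies only to internal $6^+$-vertices.

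Now split into the two kinds of internal vertex.

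\emph{False vertex.} By (Q4), $d(v)=4$, so $w(v)=0$. Since (R1) concerns only $6^+$-vertices, $v$ sends nothing under any rule, and $w'(v)=0$.

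\emph{True internal vertex.} By (Q4), $d(v)=d_G(v)\ge 6$, so $w(v)=d(v)-4\ge 2>0$. Rule (R1) splits exactly $d(v)-4$ among the $m_3(v)$ incident $3$-faces. If $m_3(v)\ge 1$, then $v$ gives away exactly $d(v)-4$ and $w'(v)=0$. If $m_3(v)=0$, nothing is sent and $w'(v)=d(v)-4>0$.

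The only point needing care is the bookkeeping that confirms (r1) and (r2) really act only on faces and on the face sets $T_v$, never on a false vertex $v$ itself. The notation $T_v$ is indexed by a vertex but denotes a set of faces, so it carries no weight of $v$. Once that is checked, the lemma is immediate.
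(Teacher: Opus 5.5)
Your proof is correct and follows the same route as the paper. The paper also splits internal vertices into false ones ($d(v)=4$, so $w'(v)=w(v)=0$) and true ones ($d(v)\ge 6$ by (Q4), so (R1) gives away at most $d(v)-4$). Your reading of $V^0$ as $V^i$ is right, and your check that (R0), (R2), (r1), (r2) never take charge from an internal vertex makes explicit what the paper leaves implicit.
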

\proof   If $v$ is   false, then $d(v)=4$ by (Q4) and hence $w'(v)=w(v)=4-4=0$.
 Otherwise,  $d(v)\geq6$ by (Q4).  (R1)  implies that  $w'(v)\geq0$.\qed

\begin{lemma}\label{internal-4-face}
If $f\in F^i$ is not a false $3$-face, then $w'(f)\ge 0$.
\end{lemma}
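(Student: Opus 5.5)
We split according to the degree of $f$. Internal faces that are not false $3$-faces are either $4$-faces, $5^+$-faces, or true $3$-faces, so we treat these three cases in turn.

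\textbf{$4$-faces.} If $d(f)=4$, then $w(f)=0$. None of the rules (R0)--(R2), (r1)--(r2) takes weight from a $4$-face. The charge in (R0.3) is sent by an outer vertex \emph{through} $g$, which we read as passing directly from $v$ to the $3$-face. Hence $w'(f)=0$.

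\textbf{$5^+$-faces.} If $d(f)\ge5$, the only rule that takes weight from $f$ is (R2). Under (R2), $f$ divides exactly $d(f)-4$ among its adjacent false $3$-faces, and nothing at all if there are none. So $w'(f)\ge d(f)-4-(d(f)-4)=0$. Neither (r1) nor (r2) involves $f$.

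\textbf{True $3$-faces.} This is the substantive case. By (r1), $f$ only gives away its surplus $\sigma(f)$ when $\sigma(f)>0$, so $w'(f)=\min\{\sigma(f),0\}$ up to the rule, and it suffices to show $\sigma(f)\ge0$. Write $f=[v_1v_2v_3]$. All three vertices are true, and $w(f)=-1$.

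\emph{No outer vertex.} All three vertices are internal $6^+$-vertices. By Claim~\ref{6+v}, each sends at least $\frac13$ to $f$, so $\sigma(f)\ge -1+3\cdot\frac13=0$.

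\emph{Exactly one outer vertex $v_1$.} By (R0.1), $v_1$ sends $1$ when $V^o=\{z^*\}$. Otherwise, by (R0.2), $v_1$ sends $\frac{14}{15}$, and $v_2,v_3$ each send at least $\frac13$ by Claim~\ref{6+v}. This gives $\sigma(f)\ge -1+\frac{14}{15}+\frac23>0$.

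\emph{Exactly two outer vertices.} By Observation~\ref{observation} there are at most two. Here $V^o=V(C^*)$, and by (R0.2) each outer vertex sends $\frac12$, so $\sigma(f)\ge-1+\frac12+\frac12=0$. The same count works if $z^*$ were involved, but $z^*$ is a single vertex, so only the $C^*$ case arises.

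I expect the only delicate point to be bookkeeping. First, one must confirm that (R0.3) and (r2) never charge non-false-$3$-faces. Second, since $G''$ is 2-connected and has no $6$-cycles, the edges of a true $3$-face are uncrossed, so its vertices really are true. With these checks, each case is a one-line inequality.
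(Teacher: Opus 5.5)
Your proof is correct and follows the paper's own argument: it uses the same split into $4$-faces (untouched), $5^+$-faces (which give away exactly $d(f)-4$ under (R2)), and true $3$-faces, where each incident vertex contributes at least $\frac13$ via (R0) or Claim~\ref{6+v}; your case split on the number of outer vertices only spells this last step out. Two cosmetic points: in general only $w'(f)\ge\min\{\sigma(f),0\}$ holds, since a true $3$-face with no adjacent false $3$-face keeps its surplus; and the vertices of a true $3$-face are true by definition, so no appeal to 2-connectivity or to the absence of $6$-cycles is needed.
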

\proof  Since $H$ is a simple graph,  $d(f)\ge 3$.
If $d(f)=4$, then $w'(f)=w(f)=4-4=0$.
If $d(f)\geq5$, then (R2) implies  that  $w'(f)\ge 0$.
Otherwise, $f$ is a true 3-face.
Then each of the vertices incident with $f$ is
an outer vertex or an internal $6^+$-vertex. By (R0) and Claim \ref{6+v}, $w'(f)\ge3-4+3\times\frac{1}{3}=0$.\qed
\smallskip

Now it remains to prove the following conclusion:

\begin{lemma}\label{false-3-face}
If $f\in F^i$ is a  $3$-face incident with a false vertex $v$, then $w'(f)\ge 0$.
\end{lemma}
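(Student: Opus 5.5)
We prove the statement at the level of the cluster $T_v$ rather than face by face. Every false 3-face lies in exactly one $T_v$ by (P1), and rule (r2) only moves weight from saturated clusters to adjacent unsaturated ones. So it suffices to show two things: (a) every saturated $T_v$ keeps nonnegative weight after (r2); and (b) every unsaturated $T_v$ receives enough through (r1) and (r2) to reach $0$. Within a cluster we then redistribute freely, since by construction the rules are applied to $T_v$ as a unit. Part (a) is immediate, because $T_v$ only gives away its surplus $\sigma(T_v)>0$.

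For (b), we first bound $\sigma(T_v)$. Let $v$ have neighbours $u_1,u_2,u_3,u_4$ in cyclic order, so the four edges $u_iu_{i+1}$ (when present) bound the false 3-faces. Each false 3-face $[vu_iu_{i+1}]$ starts with weight $-1$. Its two true vertices are internal $6^+$-vertices or outer vertices. By Claim \ref{6+v} each internal $6^+$-vertex gives at least $\frac13$, and by (R0) each outer vertex gives at least $\frac12$. So each false 3-face has $\sigma\ge -\frac13$, and it has $\sigma\ge 0$ as soon as its vertices have degree $\ge 7$, lie on many $4^+$-faces, or it is adjacent to a $5^+$-face (via (R2)).

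The key structural input is (Q1), the absence of 6-cycles in $G''$, together with Claim \ref{3path}. Take two adjacent false 3-faces $[vu_1u_2]$ and $[vu_2u_3]$. The crossing edges $u_1u_3$ and $u_2u_4$ together with the triangle edges form short cycles, and if a true 3-face or another false 3-face were attached on the outside of edge $u_iu_{i+1}$, it would be easy to build a 6-cycle, e.g. $u_1 u_2 x u_3 u_4 \cdots$. We will therefore show that around an unsaturated face the neighbouring faces across $u_iu_{i+1}$ are mostly $4^+$-faces. This forces $m_{4^+}(u_i)\ge 1$ or $2$, which raises the contributions in Claim \ref{6+v} to $\frac25$ or $\frac12$.

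The proof then splits into cases by $|T_v|\in\{1,2,3,4\}$, meaning the number of true edges $u_iu_{i+1}$ present, and by the degrees $6$ versus $7^+$ of the $u_i$. In each case we compute $\sigma(T_v)$ from Claim \ref{6+v} and (R2), and wherever it is still negative we identify the adjacent true 3-faces, which give surplus via (r1), and adjacent saturated clusters, which give via (r2). To make (r2) effective we bound $\lambda$ of the donor: we show that a saturated donor has $\sigma(T_u)\ge \lambda(T_u)\cdot\delta$ for a fixed $\delta$ large enough to cover the deficit. The typical worst case is $|T_v|=4$ with all $u_i$ of degree 6, which would give $\sigma(T_v)=-4+8\cdot\frac13=-\frac43$. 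We expect the 6-cycle exclusion to rule this out: with $u_1u_2u_3u_4$ a 4-cycle plus the crossing diagonals, the graph is $K_5$-like, and any extra neighbour path of length two yields a $C_6$. So the realizable configurations should have many $4^+$-faces at the $u_i$. Outer vertices are handled by (R0.2)–(R0.3), whose extra $\frac{7}{15}$ through $4^+$-faces plays the same role.

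The main obstacle is the case analysis of unsaturated clusters where all $u_i$ are 6-vertices and the faces beyond the $u_iu_{i+1}$ are 3- or 4-faces. Here we must carefully use Claim \ref{3path} and (Q2) to exclude separating short cycles, and (Q1) to bound $m_3(u_i)$. We would first try to prove the uniform statement that every 6-vertex on an unsaturated false 3-face has $m_{4^+}\ge 2$, reducing everything to the $\frac12$ bound. Only if that fails would we fall back on explicit transfers via (r1)/(r2).
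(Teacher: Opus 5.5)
Your cluster-level framing matches the paper's (Theorems~\ref{X_1}--\ref{X_4} prove $w'(T_v)\ge 0$ class by class), and your part (a) is right. There is a genuine gap, though: the rest is a plan, and both shortcuts you propose for bypassing the case analysis fail.

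First, you give no argument for the uniform statement that every $6$-vertex on an unsaturated cluster has $m_{4^+}\ge 2$, and it is not what the $6$-cycle exclusion yields. The paper must handle $v\in X_2^a$ whose shared vertex $x$ has $d(x)=6$ and $m_{4^+}(x)\le 1$ (Theorem~\ref{X_2}, Cases~2.1 and 2.2.2). It must also handle $v\in X_4$ with $d(w)=6$, $m_{4^+}(w)=1$ (Theorem~\ref{X_4}, Cases~2.1--2.2). It closes these not with a $6$-cycle but with specific extra charge: e.g.\ a $5^+$-face $f_x^1$ sending $\frac15$, or $f_w^3$ sending $\ge\frac13$ to $T_{x_2}$, which then passes $\ge\frac{7}{30}$ to $T_v$ by (r2).

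Second, the ``fixed $\delta$'' inequality $\sigma(T_u)\ge\lambda(T_u)\,\delta$ for saturated donors is unjustified and is not how (r2) works. A saturated cluster can have a very small surplus: in Theorem~\ref{X_2}, Case~2.1, the donor $T_{x_2}$ is only guaranteed $\frac1{15}$, while single deficits reach $\frac15$. The paper instead bounds $\lambda$ of the particular donor (typically $\lambda\le 1$ or $2$) and computes its surplus in that particular configuration.

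Several local facts you plan to build on are also wrong. Two $7^+$-vertices with $m_{4^+}=0$ give only $\frac67$, and an adjacent $5$-face may send only $\frac15$. So neither saturates a false $3$-face: you get $\sigma\ge-\frac17$, resp.\ $\sigma\ge-\frac2{15}$ with two $6$-vertices. For $|T_v|=4$ the set $U$ induces $K_4$ in $G''$, since the crossing point is not a vertex. One extra triangle on an edge $u_iu_{i+1}$ adds just one vertex, so it gives no $6$-cycle. The paper needs two such attachments (e.g.\ $C_6=xyzw_2wx_2x$ in Claim~\ref{X_4_1}) or an attachment plus a further neighbour.

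Most importantly, you never address why the six vertices of a candidate cycle are distinct, which is where much of the paper's effort goes. Fullness is certified through Claim~\ref{3path}: a short true trail with neighbours on both sides cannot close up, by (Q2). Otherwise the paper splits on coincidences such as $s_2=w$, $u_1=y$, $z_1=w$. Without that configuration-by-configuration analysis, including the distinctness checks and the explicit (r1)/(r2) transfers, the lemma is not proved.
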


We introduce some notation that will be used throughout the proofs below.
Fix such a false vertex $v$, let $x,y,z,w$ be the neighbors of $v$ in $H$ in cyclic order.
For $i\in[0,3]$, let $f_i$ denote the incident face  of $v$ in $H$ such that $vx,vy\in E(f_0), vy,vz\in E(f_1), vz,vw\in E(f_2), vw,vx\in E(f_3)$. Set $U=\{x,y,z,w\}$. For $u\in U$, let $v,u_1,u_2,\cdots,u_{d(u)-1}$ be the neighbors of $u$ in $H$ in cyclic order.
For $i\in[1,d(u)-2]$, let $f_u^i$ denote the incident face of $u$ in $H$ such that $uu_i,uu_{i+1}\in E(f_u^i)$.
For $i\in [1,4]$, let $X_i$ denote the set of false vertices in $H$ which are incident with exactly $i$ 3-faces.
Define $X_2=X^{n}_2\cup X^a_2$, where $X^n_2$ and $X^a_2$ are the sets of vertices in $X_2$ which are incident with two non-adjacent and adjacent 3-faces, respectively.

Instead of proving Lemma~\ref{false-3-face} directly, we prove the following four theorems, which together imply the lemma.

\begin{theorem}\label{X_1}
If $f$ is a false $3$-face incident with a false vertex $v\in X_1\cup X_2^n$, then either $w'(f)\ge 0$ or $G''$ contains a $6$-cycle.
\end{theorem}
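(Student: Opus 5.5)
Throughout, $v\in X_1\cup X_2^n$ is the false vertex of the statement, $U=\{x,y,z,w\}$ its neighbours and $f_0,\dots,f_3$ its incident faces. Without loss of generality $f=f_0=[vxy]$. Since $v\in X_1\cup X_2^n$, both faces $f_1$ and $f_3$ adjacent to $f$ across $vy$ and $vx$ are $4^+$-faces. The 3-faces in $T_v$ are pairwise non-adjacent: $T_v=\{f_0\}$ if $v\in X_1$, and $T_v=\{f_0,f_2\}$ if $v\in X_2^n$. It therefore suffices to prove $\sigma(f)\ge0$ for each such 3-face, perhaps after (r1)--(r2) top-ups. The initial weight is $w(f)=-1$. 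Because $xy$ is a true edge, $x$ and $y$ are true vertices.

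\textbf{Step 1: supply from the endpoints.} Each of $x,y$ is either outer or an internal $6^+$-vertex. An outer endpoint gives at least $\frac12$ directly by (R0.2), and possibly $\frac{7}{15}$ more through the incident $4^+$-face $f_1$ or $f_3$ by (R0.3). This already reaches $1$ unless the other endpoint is weak; when both are outer, (R0.2) gives $\frac12+\frac12=1$. For an internal endpoint, say $x$, note that $x$ is incident with $f_3$, a $4^+$-face, so $m_{4^+}(x)\ge1$. Claim~\ref{6+v} then gives $\tau(x\to f)\ge\frac25$ if $d(x)=6$ and $\ge\frac12$ if $d(x)\ge7$. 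The same holds for $y$ via $f_1$. So $\sigma(f)\ge -1+\frac25+\frac25=-\frac15$, and $\sigma(f)\ge0$ as soon as each of $x,y$ has $m_{4^+}\ge2$, or one of them has degree $\ge7$ with $m_{4^+}\ge2$, and so on.

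\textbf{Step 2: the deficient configurations.} What remains is essentially the case where $x$ and $y$ are both internal 6-vertices with $m_{4^+}(x)=m_{4^+}(y)=1$. Then all other faces around $x$ and $y$ are 3-faces, so $f_3$ and $f_1$ are the unique $4^+$-faces at $x$ and at $y$. Consider the 3-face $f_x^{j}$ adjacent to $f$ across $xy$, with third vertex $t$; the 3-faces fanning around $x$ and $y$ are consecutive. If $t$ is a true vertex, the fans of 3-faces around $x$ and $y$ give many short paths, and I would exhibit a 6-cycle. Possible candidates are $v$-free cycles through $x,y,t$ and fan neighbours, lifted back to $G''$ and replacing a false vertex by the original crossing edges. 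Claim~\ref{3path} guarantees that the vertices so obtained are distinct, so the cycle is genuinely a 6-cycle and contradicts (Q1). Otherwise the adjacent faces are false 3-faces, or true 3-faces with slack. In that case I would use (r1): a true 3-face at two 6-vertices with $m_{4^+}=1$ receives $\ge\frac25$ from each of those two and $\ge\frac13$ from its third vertex, so it has surplus to share. I would also use (R2): if $f_1$ or $f_3$ is a $5^+$-face, then $f$ receives at least $\frac{d-4}{d}\ge\frac15$, since a $d$-face has at most $d$ adjacent false 3-faces. This closes the gap of $\frac15$. If instead $f_1$ and $f_3$ are 4-faces, their boundaries together with the triangles at $x,y$ again produce a 6-cycle, found by combining a 4-face through $v$ with a triangle sharing an edge, and again using Claim~\ref{3path} for distinctness.

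\textbf{Main obstacle.} The delicate step is the final subcase of Step 2, in which $f_1$ and $f_3$ are 4-faces and all remaining faces at $x,y$ are 3-faces. I would first try to build the 6-cycle explicitly. Take the 4-face $f_1=[vyab]$ and a 3-face $[yac]$ at $y$. Since $v$ is a crossing of the edges $xz$ and $yw$, pass from $v$ to its true neighbours: the walk $x\,y\,c\,a\,b\,z\,x$ uses the crossing edge $xz$ of $G''$. Its length is 6 once $x,y,c,a,b,z$ are shown distinct, which is where Claim~\ref{3path} and the absence of separating true $4^-$-cycles (Q2) are needed. Should distinctness fail in some degenerate case, I would fall back on the (r1)/(r2) redistribution from the neighbouring saturated faces.
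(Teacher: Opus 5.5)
\textbf{There is a genuine gap: the case $d(f_1)=d(f_3)=4$ is not proved.} Your Step~1 and the (R2) reduction agree with the paper: if $f_1$ or $f_3$ is a $5^+$-face, $f$ receives at least $\frac15$. But the remaining case $d(f_1)=d(f_3)=4$ is the real content of the theorem, and there your plan is not carried out. The one explicit construction you give also fails. Since $f_1$ is the face at $v$ between $vy$ and $vz$, a 4-face $f_1$ is $[vyaz]$, so your $b$ equals $z$. Then $x\,y\,c\,a\,z\,x$ is a $5$-cycle, which contradicts nothing.

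The cycle that works uses \emph{both} crossing edges $xz,yw$ through $v$ and \emph{both} 4-faces: $x\,x_1\,w\,y\,y_{d(y)-1}\,z\,x$. Here $x_1,y_{d(y)-1}$ are the fourth vertices of $f_3,f_1$, distinct by Claim~\ref{3path} applied to $x_1xyy_{d(y)-1}$. This requires both fourth vertices to be true, and you give no argument when one of them is a crossing. The paper handles that case as follows.
If $x_1$ is false and $f_x^1=[xx_1x_2]$, the crossing at $x_1$ supplies the edge $x_2w$ and the cycle $x\,x_2\,w\,y\,y_{d(y)-1}\,z\,x$.
Otherwise $m_{4^+}(x)\ge2$, so $\tau(x\to f)\ge\frac12$. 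This forces $d(y)=6$ and $m_{4^+}(y)=1$.
A 6-cycle is then found through the fan of $y$, e.g.\ $x\,z\,y_5\,y^*\,y\,y_2\,x$ with $f_y^1=[xy_2y]$ and $y^*\in\{y_3,y_4\}$ chosen true via (P1). Further subcases arise when $y_2$ or $y_4$ is a crossing.
The case where both fourth vertices are crossings is treated in parallel.

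\textbf{Your description of what remains after Step~1 is too narrow.} It is not only ``$x,y$ are 6-vertices with $m_{4^+}=1$''. For example, $d(x)=6$, $m_{4^+}(x)=2$ together with $d(y)=6$, $m_{4^+}(y)=1$ gives only $\sigma(f)=-1+\frac12+\frac25=-\frac1{10}$. This is exactly the configuration that needs the fan-of-$y$ cycles above.

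\textbf{The (r1) fallback cannot replace these cycles.} In the worst case a true 3-face adjacent to $f$ has surplus $-1+\frac25+\frac25+\frac13=\frac2{15}$, or $\frac7{30}$ when one endpoint sends $\frac12$. This is shared among up to three false 3-faces, so only $\frac2{45}$ or $\frac7{90}$ is guaranteed to reach $f$, which is below the deficit $\frac1{10}$. The paper's proof of this theorem closes every deficient case with an explicit 6-cycle, using only (R0)--(R2) and not (r1)--(r2).
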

\proof
Assume, w.l.o.g., that  $f=f_0$ with $d(f_0)=3$, $d(f_2)\ge3$ and $d(f_1), d(f_3)\ge4$.
Note that $w(f)=w(f_0)=3-4=-1$ and $x=y_1$,  $y=x_{d(x)-1}$.
Let $u\in\{x,y\}$.  It is easy to see  that $m_{4^+}(u)\ge1$.
By Claim \ref{6+v} and (R0), we obtain that $\tau(u\rightarrow f)\ge\frac{2}{5}$.
If $x\in V^o$, then $\tau(x\rightarrow f)\ge\frac{1}{2}$ by (R0.1)--(R0.2) and $x$ sends additional $\frac{7}{15}$ to $f$ through $f_3$ by (R0.3).
So, $w'(f)\ge-1+\frac{2}{5}+\frac{1}{2}+\frac{7}{15}=\frac{11}{30}$.
Hence, $x,y\in V^i$ and $d(x),d(y)\ge6$ by (Q4) and symmetry.
If $d(f_i)\ge5$ for some $i\in\{1,3\}$, then  $\tau(f_i\rightarrow f)\ge\frac{d(f_i)-4}{d(f_i)}\ge\frac{1}{5}$ by (R2) and therefore
$w'(f)\ge-1+\frac{2}{5}\times2+\frac{1}{5}=0$.
Thus, $d(f_1)=d(f_3)=4$.
Suppose that $x_1, y_{d(y)-1}$ are true vertices, then let  $S=U\cup\{y_{d(y)-1}\}$. It is easy to check that no two vertices in $S$ are identical, and $d_H(x_1, u)\le 2$ for each $u\in U$.
Moreover, since $d(x)\ge6$ by (Q4), $P=x_1xyy_{d(y)-1}$ is a trail satisfying the conditions of Claim  \ref{3path}.
By Claim  \ref{3path},  $x_1\ne y_{d(y)-1}$.
This implies that $S\cup\{x_1\}$ is full.
We set $C_6=xx_1wyy_{d(y)-1}zx$, which is a 6-cycle.
Then the proof is split into two cases as follows  by symmetry.

\medskip
\noindent{\bf Case 1.} $y_{d(y)-1}$ is  true  and $x_1$ is  false.
\medskip

If $d(f_x^1)=3$, then $x_2\in V(G'')$ and we find a 6-cycle $C_6=xx_2wyy_{d(y)-1}zx$,  similar to the above discussion.
Hence, $d(f_x^1)\ge4$ and therefore $\tau(x\rightarrow f)\ge\frac{1}{2}$ by Claim \ref{6+v}.
Next, we claim that $d(y)=6$ and $m_{4^+}(y)=1$.
Otherwise, either $d(y)\ge7$ and $m_{4^+}(y)\ge1$, or  $d(y)=6$ and $m_{4^+}(y)\ge2$.
In either subcase, Claim \ref{6+v} implies $\tau(y\rightarrow f)\ge\frac{1}{2}$.
Consequently, $w'(f)\ge-1+\frac{1}{2}\times2=0$.
Thus $f_y^1=[xy_2y]$. If $y_2$ is true, then at least one of $y_3$ and $y_4$ is true by (P1).
Define $y^*=y_4$ if $y_4$ is true, otherwise $y^*=y_3$.
Let $S=\{x,y,z,y_2,y^*,y_5\}$.
Clearly $S$ is  full, and we obtain a 6-cycle $C_6=xzy_5y^*yy_2x$, see thick lines in Fig.\,\ref{fig2}.
Otherwise, $y_2$ is false and $y_3$ is true. If $y_4$ is true,  then $C_6=xzy_5y_4yy_3x$ is a 6-cycle.
Now suppose $y_4$ is false.
Then we claim that $d(x)=6$ and $m_{4^+}(x)=2$;
otherwise, by Claim \ref{6+v},  $\tau(x\rightarrow f)\ge\frac{3}{5}$,
and hence $w'(f)\ge-1+\frac{3}{5}+\frac{2}{5}=0$.
So, $x_3$ is true and $xx_3\in E(G'')$. We obtain a 6-cycle $C_6=xzy_5y_3yx_3x$.

\medskip
\noindent{\bf Case 2.} $y_{d(y)-1}$ and $x_1$ are false.
\medskip

If $\tau(u\rightarrow f)\ge\frac{1}{2}$ for each $u\in\{x,y\}$, then $w'(f)\ge-1+\frac{1}{2}\times2=0$.
By Claim \ref{6+v} and symmetry, we may assume that $d(y)=6$ and $m_{4^+}(y)=1$.
Then $y_4$ is true.
If $d(f_x^1)=3$, then as before we obtain a $C_6=xx_2wyy_4zx$, where $x_2\ne y_4$ by Claim \ref{3path} and  $S=U\cup\{x_2,y_4\}$  is full.
Hence, $d(f_x^1)\ge4$.
Next we claim that $d(x)=6$ and $m_{4^+}(x)=2$; otherwise, by Claim \ref{6+v}, $w'(f)\ge-1+\frac{3}{5}+\frac{2}{5}=0$.
Now we consider the following two situations.
If $y_2=x_4$ is true, then we define $x^*=x_3$ if $x_3$ is true, otherwise $x^*=x_2$.
Let  $S=\{x,y,z,y_2,x^*\}$. No two vertices in $S$ are identical, and $d_H(y_4, u)\le 2$ for each $u\in S\setminus\{x^*\}$.
Moreover, $P=y_4yxx^*$ is a trail satisfying the conditions of Claim  \ref{3path} with $z$ and $y_2$ belonging to $Y$ and $Z$, respectively.
By Claim  \ref{3path},  $y_4\ne x^*$, so $S\cup\{y_4\}$ is full.
Then we  find a  $C_6=xzy_4yy_2x^*x$.
If $y_2=x_4$ is false, then $x_3,y_3\in V(G'')$ and $y_4y_3\in E(G'')$.
Obviously, $S=\{x,y,z,x_3,y_3,y_4\}$ is full and we set $C_6=xzy_4y_3yx_3x$.
\qed

    \begin{figure}[H]
 	\centering
 \epsfig{file=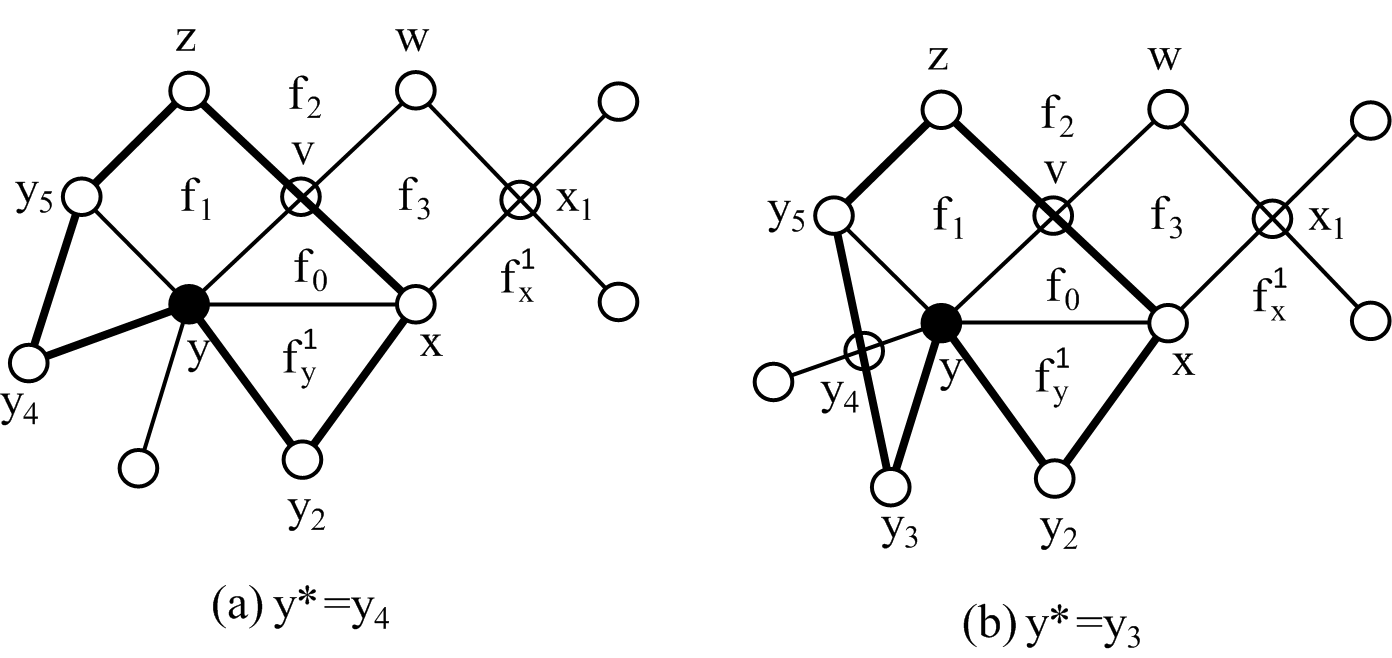, width=12.5cm}
 	\caption{ Two configurations in the proof of Theorem \ref{X_1}.}
 	\label{fig2}
 \end{figure}

In Theorems \ref{X_2}--\ref{X_4}, we do not directly show $w'(f)\ge0$ for a false $3$-face $f$; instead we show that for the incident false vertex $v$, either $w'(T_v)\ge0$ or $G''$ contains a $6$-cycle.

\begin{theorem}\label{X_2}
If $v\in X_2^a$ is a false vertex, then either $w'(T_v)\ge0$ or $G''$ contains a $6$-cycle.
\end{theorem}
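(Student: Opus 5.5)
By symmetry, I would take $T_v=\{f_0,f_1\}$ with $d(f_0)=d(f_1)=3$ and $d(f_2),d(f_3)\ge4$. Then $f_0=[vxy]$, $f_1=[vyz]$, $w(T_v)=-2$, and $y$ lies on both triangles. The plan mirrors the proof of Theorem~\ref{X_1}. We add up the charges that $x,y,z$ send to $T_v$ by (R0)--(R1), together with what the $5^+$-faces $f_2,f_3$ send by (R2) and what (r1)--(r2) supply. We then show that either the total is at least $2$, or enough small faces sit around $x,y,z$ to close a $6$-cycle through $w$ or through the triangles at $y$.

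The first reductions are charge counting.
\begin{enumerate}[(i)]
\item Since $d(f_3)\ge4$ and $d(f_2)\ge4$, we have $m_{4^+}(x)\ge1$ and $m_{4^+}(z)\ge1$. By Claim~\ref{6+v}, each of $x$ and $z$ sends at least $\frac25$ to its triangle.
\item The vertex $y$ sends to both $f_0$ and $f_1$, each time at least $\frac13$.
\item If some vertex of $U$ is outer, (R0.2)--(R0.3) give at least $\frac12+\frac7{15}$ through the adjacent $4^+$-face, as in Theorem~\ref{X_1}, and this finishes that case.
\end{enumerate}
So from now on $x,y,z\in V^i$ have degree at least $6$, and $w$ is either true or an outer vertex.
\begin{enumerate}[(i)]
\setcounter{enumi}{3}
\item If $d(f_2)\ge5$ or $d(f_3)\ge5$, that face sends at least $\frac15$ to the adjacent triangle by (R2). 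This alone does not reach $0$, since $\frac45+\frac23+\frac15<2$. So I would sharpen the bounds by showing that $y$ must have $m_{4^+}(y)\ge1$ unless a $6$-cycle appears.
\end{enumerate}

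The key structural step concerns the neighbours of $y$. Let $f_y^1$ and $f_y^{d(y)-2}$ be the faces of $y$ adjacent to $f_0$ and $f_1$ across $yx$ and $yz$. Suppose all faces at $y$ are $3$-faces, say $y_1=x$ and $y_2$ true. Then the paths $x\,y_2\,y\ldots$ together with $z\,w\,x$ give $6$-cycles such as $x y_2 y' y z w x$. The reason is that the true vertices among $y_2,\dots,y_{d(y)-1}$ are joined consecutively, with the false ones skipped via (P1). This uses only $d(y)\ge6$. Fullness of the vertex sets would be checked using Claim~\ref{3path}, applied to trails like $x\,y\,z\,z_1$ whose two sides each contain at least one vertex.

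This argument yields $m_{4^+}(y)\ge1$, hence $\tau(y\to f_i)\ge\frac25$. With it, the bound becomes $-2+\frac25\cdot2+\frac25\cdot2=-\frac25$. So further case analysis is needed.
\begin{enumerate}[(i)]
\setcounter{enumi}{4}
\item Exclude $d(x)=6$ with $m_{4^+}(x)=1$ by producing a $6$-cycle $x x_2 \cdots$ through $w$, as in Case~1 of Theorem~\ref{X_1}. Do the same for $z$.
\item Likewise exclude the case where $y$ has exactly one $4^+$-face.
\end{enumerate}
After these two exclusions every vertex sends at least $\frac12$, so the total is at least $-2+\frac12\cdot4=0$.

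The remaining leftover configurations, where $x$ or $z$ is incident to false vertices $x_1$ and $z_{d(z)-1}$ so that the cycle is blocked, would be handled with (r1)--(r2). Adjacent true triangles with positive $\sigma$, or saturated neighbouring $T_u$, cover the small deficit. The main obstacle is this last step. One must either show that in these blocked configurations some adjacent true $3$-face $f_x^1$ or some saturated $T_u$ has surplus at least the deficit, or find the $6$-cycle anyway. The difficulty is bounding $\lambda(T_u)$ so that the surplus is not diluted. I would first try to prove that a neighbouring $T_u$ across a false vertex $x_1$ has $\lambda(T_u)\le2$ and surplus at least $\frac15$ in those cases.
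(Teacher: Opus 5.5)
\textbf{There is a genuine gap.} Apart from the routine counting in (i)--(iv), every reduction you rely on is justified either by a cycle that need not exist or by the postponed final step. That final step is in fact the bulk of the proof.

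\textbf{The cycle in your structural step does not exist.} In your labelling $f_2,f_3$ are $4^+$-faces, so the only edges of $G''$ guaranteed among $U$ are $xy$, $yz$ and the two crossing edges $xz$, $yw$ through $v$. The edges $zw$ and $wx$ need not exist, so $xy_2y'yzwx$ is not a cycle. The only known ways to continue from $w$ are $yw$ and the boundary edges of $f_2,f_3$. Those routes are blocked exactly when the boundary vertices are false.

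\textbf{The claim $m_{4^+}(y)\ge1$ for the shared vertex cannot be proved locally.} Take $d(y)=6$ with all faces at $y$ triangles, $y_2,y_4$ false, $y_3$ true, and $f_2,f_3$ $4$-faces whose fourth vertices are false. The crossing edges through $y_2$ and $y_4$ are $xy_3$, $y_3z$ and two edges from $y$ to outside vertices. So $\{x,y,y_3,z\}$ spans a $K_4$ in $G''$, everything else known attaches to it as pendant trees, and nothing in this local picture forces a $6$-cycle. This is precisely the paper's Case~2.2.2 in the proof of this theorem; there the shared vertex is called $x$, with $x_2,x_4$ false. The paper does not exhibit a $6$-cycle there in general. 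Instead it uses $6$-cycle exclusions to show two things: $x_2\in X_2^a$ receives at least $\frac13$ from an adjacent $5^+$-face, and $y_4\in X_1$ receives at least $\frac23$ from $f_y^4$. The resulting surplus is then moved to $T_v$ by (r2).

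\textbf{Your step (vi) also fails by cycles alone.} A shared vertex of degree $6$ with exactly one $4^+$-face is the paper's Case~2.2.1. Several of its subcases are closed only by charge, not by a cycle: either by (r1) from a true triangle $f_u^1$, worth $\frac7{90}$, or by (r2) from $T_{x_2}$ after analysing $T_{w_2}$. So the configurations you call leftover are the core of the argument.

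\textbf{Your proposed remedy is not available in general.} You suggest a uniform surplus of at least $\frac15$ in a neighbouring $T_u$. In the paper's Case~2.1, however, $x_2\in X_1$ only guarantees $\sigma(T_{x_2})\ge-1+\frac25+\frac23=\frac1{15}$. The paper compensates by first using $6$-cycles to force sharper bounds on the other vertices. Examples are $m_{4^+}(w)\ge3$, hence $\tau(w\to T_v)\ge\frac23$, and $m_{4^+}(y)\ge2$; after that it needs only $\frac1{15}$.

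\textbf{What is missing.} Your plan lacks this interlocking of cycle exclusions with exact (R2)/(r1)/(r2) amounts. The paper organises it by two things: the sizes of the two $4^+$-faces at $v$ ($5^+/5^+$, $5^+/4$, $4/4$), and whether their fourth vertices are false. That organisation is what lets both the $6$-cycles and the charge estimates close.
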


\begin{theorem}\label{X_3}
If $v\in X_3$ is a false vertex, then either $w'(T_v)\ge0$ or $G''$ contains a $6$-cycle.
\end{theorem}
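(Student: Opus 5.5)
Let $v\in X_3$, and label so that $f_0,f_1,f_2$ are the three false $3$-faces at $v$ and $d(f_3)\ge4$. Then $T_v=\{f_0,f_1,f_2\}$ has initial weight $-3$, and the faces are $f_0=[xvy]$, $f_1=[yvz]$, $f_2=[zvw]$. Since $v$ is a crossing, $xz$ and $yw$ are the two crossed edges of $G''$. Also $xy,yz,zw\in E(G'')$ are non-crossed, so $x,y,z,w$ are true vertices and $xyzw$ is a path in $G''$. The strategy follows Theorem~\ref{X_1}: first bound from below the charge that $T_v$ receives under (R0)--(R2) and (r1)--(r2), then show that every configuration where this charge is below $3$ yields a $6$-cycle in $G''$, which contradicts (Q1).

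\textbf{Outer vertices.} If some vertex of $U$ lies in $V^o$, I will use (R0.2)--(R0.3) as in Theorem~\ref{X_1}. An outer vertex gives at least $\frac12$ to each incident face of $T_v$. An end vertex $x$ or $w$ adds $\frac7{15}$ through $f_3$. Together with Claim~\ref{6+v} for the remaining vertices, this should give $\sigma(T_v)\ge0$ directly. So I assume $U\subseteq V^i$, and by (Q4) every vertex of $U$ has degree at least $6$.

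\textbf{Baseline count.} In $T_v$, $x$ and $w$ each lie in one face, $y$ and $z$ lie in two faces each, and $f_3$ is a $4^+$-face at $x$ and $w$. Claim~\ref{6+v} then gives a baseline of
\[
\tfrac25\cdot2+\tfrac13\cdot4=\tfrac{32}{15}.
\]
This falls short of $3$, so I need about $\frac{13}{15}$ more. It can come from three sources:
\begin{itemize}
\item a $5^+$-face $f_3$, or $5^+$-faces adjacent to $f_0,f_1,f_2$ across $xy$, $yz$, $zw$, which contribute via (R2);
\item further $4^+$-faces at $y$ or $z$, which raise their shares to $\frac12$ or $\frac23$;
\item surplus from adjacent true $3$-faces via (r1), or from saturated neighbouring $T_u$ via (r2).
\end{itemize}

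\textbf{Cycle hunting.} The core step is to show that if $y$ (and symmetrically $z$) has $m_{4^+}=0$ and degree $6$, then a $6$-cycle appears. The available short paths are the $4$-path $xyzw$ and the chords $xz$, $yw$ in $G''$. For example, a true common neighbour $y_i$ of $y$ on a true $3$-face $[yy_iy_{i+1}]$ yields cycles such as $x\,z\,w\,y\,y_i\,y_{i+1}$ (when $y_{i+1}$ is adjacent to $x$) or $y_iy\,x\,z\,w\,\ldots$. Distinctness of the six vertices will be guaranteed by Claim~\ref{3path}, applied to trails such as $y_iyzw$ or $xyzz_j$, in the same way as in Theorem~\ref{X_1}.

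\textbf{Case split.} I will split into cases according to whether the faces $(f_0)_{xy}$, $(f_1)_{yz}$, $(f_2)_{zw}$ are true $3$-faces, false $3$-faces, or $4^+$-faces.
\begin{itemize}
\item \emph{True $3$-faces.} The third vertex $t$ is a common neighbour. Using $t$ together with the chords, a $6$-cycle such as $t\,y\,w\,z\,x\,\ldots$ of length $6$ should be forced, typically after adding one more neighbour of $x$ or $w$ taken from the $4$-face $f_3$.
\item \emph{False $3$-faces.} These lie in another $T_u$. If $T_u$ is saturated it passes surplus by (r2); otherwise both $T$'s are unsaturated, and I expect adjacent unsaturated crossings to produce a $6$-cycle.
\item \emph{$4^+$-faces.} These increase $m_{4^+}$ at the corresponding vertices, and the charge gain follows from Claim~\ref{6+v}.
\end{itemize}

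The main obstacle is the (r1)/(r2) bookkeeping. Unlike Theorem~\ref{X_1}, charge must be counted as received from neighbouring true $3$-faces and neighbouring $T_u$'s, and I must ensure that surplus is not double-counted while still showing enough of it reaches $T_v$. I would first try to avoid (r2) entirely: I would aim to prove that any configuration needing (r2) already contains a $6$-cycle built from the crossing chords of both $v$ and $u$, and fall back on (r2) only where that fails.
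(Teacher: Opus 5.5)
Your setup is sound. The crossing edges at $v$ are $xz$ and $yw$, the outer-vertex cases close by direct computation from (R0), and the baseline $\frac{32}{15}$ leaves a deficit of $\frac{13}{15}$. After that, however, the proposal is a plan rather than a proof. The heuristics meant to fill it also fail exactly in the hard configurations.

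Your ``core step'' is false as a local statement. Take $d(y)=6$, $m_{4^+}(y)=0$, with $y_2,y_4$ false and $y_3$ true. The crossings at $y_2$ and $y_4$ only add the edges $xy_3$ and $y_3z$, plus edges from $y$ to two vertices about which nothing is known. Among $x,y,z,w,y_3$ this gives $5$-cycles such as $xy_3zwyx$, but no $6$-cycle. The paper (Case 1, $d(f_3)\ge5$) closes this only by mixing in charge:
\begin{itemize}
\item It first proves $\tau(f_3\to T_v)\ge\frac23$; for $d(f_3)=5$ this comes after excluding $3$-faces on $(f_3)_{sx_1}$ and $(f_3)_{xx_1}$ via $6$-cycles.
\item Then one of $x,w$ must be a $6$-vertex with exactly one $4^+$-face. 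If it is $x$, this supplies $x_3$ with $x_3y\in E(G'')$ and the cycle $xx_3ywzy_3x$.
\end{itemize}

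Likewise, a true $3$-face across $zw$ need not close a $6$-cycle. Take $f_3=[xvwx_1]$ with $x_1$ false, $(f_3)_{xx_1}$ and $(f_3)_{wx_1}$ both $4^+$-faces, $f_w^1=[wzw_2]$ true and $d(f_y^1)\ge4$. Then no common neighbour of $x$ and $w$ is available. If moreover $y$ is not a $6$-vertex with $m_{4^+}(y)=1$ and $z$ is not a $6$-vertex with $m_{4^+}(z)=0$, the paper finds no $6$-cycle. It uses (r1) instead: $\sigma(f_w^1)\ge-1+\frac25+\frac12+\frac13=\frac7{30}$, so $\tau(f_w^1\to T_v)\ge\frac7{90}$. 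This is exactly what lifts the bound from $-\frac1{30}$ to $\frac2{45}$. So your plan to avoid (r1)/(r2) cannot work.

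The organizing idea you are missing is the primary split on $f_3$; you treat $f_3$ only as a possible charge source.
\begin{itemize}
\item Suppose $f_3=[xvwx_1]$ with $x_1$ true, or with $x_1$ false but $(f_3)_{xx_1}$ or $(f_3)_{wx_1}$ a $3$-face. Then $x$ and $w$ have a common neighbour. Every true $3$-face across $xy$, $yz$, $zw$ closes a $6$-cycle such as $xyzw_2wx_1x$. Every false one must have its crossing in $X_1\cup X_2^n$, else there is a $6$-cycle. This forces $m_{4^+}(u)\ge2$ for all $u\in U$ and a total of $\frac12\cdot2+1\cdot2=3$, with no (r1)/(r2) at all.
\item Only when $x_1$ is false and both $(f_3)_{xx_1}$ and $(f_3)_{wx_1}$ are $4^+$-faces does the real work begin. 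There the baseline is only $\frac12\cdot2+\frac23\cdot2=\frac73$.
\end{itemize}

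In that last case your treatment of false $3$-faces is too coarse. Knowing that a neighbouring $T_u$ is saturated is not enough, because (r2) splits $\sigma(T_u)$ among up to four unsaturated neighbours. The paper has to show, for example, that $y_2\in X_1$ (so $\lambda(T_{y_2})\le1$) and that $f_y^2$ is a $5^+$-face (else a $6$-cycle). Only then does it obtain $\sigma(T_{y_2})\ge-1+\frac25+\frac23+\frac14=\frac{19}{60}$. Your fallback, that two adjacent unsaturated crossings yield a $6$-cycle, is asserted without argument. Without these configuration-specific $6$-cycle constructions and quantitative transfer bounds (Cases 3.1--3.3 of the paper), the proposal does not prove the theorem.
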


\begin{theorem}\label{X_4}
If $v\in X_4$ is a false vertex, then either $w'(T_v)\ge0$ or $G''$ contains a $6$-cycle.
\end{theorem}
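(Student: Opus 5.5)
We prove Theorem~\ref{X_4} with the same method as Theorem~\ref{X_1}. We assume that $G''$ has no $6$-cycle and show $w'(T_v)\ge 0$. Since $v\in X_4$, all four faces $f_0,\dots,f_3$ are false $3$-faces: $f_0=[vxy]$, $f_1=[vyz]$, $f_2=[vzw]$, $f_3=[vwx]$. So $w(T_v)=-4$, and $x,y,z,w$ are true vertices by (P1). In $G''$ the edges $xz$ and $yw$ cross at $v$, and $xy,yz,zw,wx$ are edges of $G''$. So $U$ induces a $K_4$ in $G''$.

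\textbf{Step 1: easy cases.} If some $u\in U$ is outer, then (R0) gives it $\tau(u\to f_i)\ge\frac12$ for each of its two faces in $T_v$. The further bonuses from (R0.3) through adjacent $4^+$-faces, together with Claim~\ref{6+v} for the other vertices, should then give $\sigma(T_v)\ge0$. So we may assume every $u\in U$ is internal with $d(u)\ge6$. Each $u\in U$ sends at least $\frac13$ to each of its two faces in $T_v$, which gives $\sigma(T_v)\ge -4+8\cdot\frac13=-\frac43$. The deficit must come from the faces adjacent across the edges $xy,yz,zw,wx$ (call these \emph{outer neighbours}), and from extra $4^+$-faces at the vertices of $U$.

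\textbf{Step 2: the outer neighbours.} Let $g$ be the outer neighbour across $xy$. If $d(g)\ge5$, then $g$ sends at least $\frac15$ by (R2). If $d(g)=4$, then $m_{4^+}(x)$ and $m_{4^+}(y)$ both increase, and Claim~\ref{6+v} raises their contributions to at least $\frac25$ each. If $g$ is a true $3$-face $[xyt]$, then $t\notin U$, and $t$ together with the $K_4$ on $U$ gives a $5$-vertex configuration. If a second such apex $t'$ lies across another edge, then a cycle such as $xtyzt'wx$ (or a similar one) is a $6$-cycle once the vertices are shown distinct. Distinctness follows from Claim~\ref{3path} in the same way as in Theorem~\ref{X_1}. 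The same holds if $g$ is a false $3$-face whose false vertex has a true neighbour outside $U$ adjacent to $x$ or $y$. We conclude that at most one outer neighbour can be a $3$-face with a true apex. Also, a true $3$-face $g$ with positive $\sigma(g)$ passes weight to $T_v$ by (r1), and false neighbours are handled by (r2).

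\textbf{Step 3: counting, and the main obstacle.} With at most one outer neighbour being a $3$-face, at least three of the edges $xy,yz,zw,wx$ border $4^+$-faces or faces that send weight. Each such edge contributes at least $\frac15$ via (R2), or raises the share of its endpoints to at least $\frac25$ via Claim~\ref{6+v}. This should lift $\sigma(T_v)$ to at least $0$ except when all degrees equal $6$ and the non-$3$-faces around $U$ are all $4$-faces. In that tight configuration we examine the neighbours $u_1,u_2,\dots$ of the vertices of $U$, as in Cases~1--2 of Theorem~\ref{X_1}. We expect to find a $6$-cycle through three vertices of $U$ and true second neighbours, for example $xu_1\cdots$ routed around a $4$-face. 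The main obstacle is the remaining case where the outer neighbour is a false $3$-face belonging to an unsaturated $T_u$, because then (r2) passes no weight to $T_v$. Here the first attempt will be to show that two adjacent unsaturated $X_4$-type structures force a $6$-cycle through the shared true vertices. If instead $T_u$ is saturated, then $\lambda(T_u)$ is small and (r2) delivers enough weight. The delicate part of the argument is this bookkeeping of $\Lambda(T_u)$ together with distinctness via Claim~\ref{3path}.
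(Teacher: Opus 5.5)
Your outline has the right ingredients: the base $\frac13$ per face from Claim~\ref{6+v}, $6$-cycles through $U$ and second neighbours, and distinctness via Claim~\ref{3path}. But it is a plan rather than a proof, and two of its load-bearing steps fail as stated.

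First, Step~1 is false. An outer vertex in $U$ does not by itself give $\sigma(T_v)\ge0$. Suppose only $z$ is outer and $x,y,w$ are internal $6$-vertices with $m_{4^+}=0$. Then (R0.2) gives $\frac{14}{15}$ to each of $f_1,f_2$. (R0.3) gives nothing, because the faces across $yz$ and $zw$ are $3$-faces. You only reach $-4+\frac{28}{15}+3\cdot\frac23=-\frac2{15}$. The paper closes this case with a $6$-cycle: $xyzwx_2x^*x$ if $x_2$ is true; otherwise $x_2,x_4$ (and symmetrically $w_2,w_4$) are false, and $xx_3yzww_3x$ works.

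Second, and more seriously, Step~3 starts from ``at most one outer neighbour is a $3$-face''. Step~2 only excludes two outer $3$-faces with \emph{true} apices. Outer $3$-faces with false apices can sit across several, even all four, of $xy,yz,zw,wx$. They send nothing via (R2), they do not raise any $m_{4^+}(u)$, and they are exactly where the deficit lives. You name this as the main obstacle but give only an intended first attempt, so the core of the theorem is unproved.

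The missing idea is the structural reduction of Claims~\ref{X_4_1} and~\ref{X_4_2}:
\begin{itemize}
\item If all four outer neighbours are $4^+$-faces, then $m_{4^+}(u)\ge2$ and $\tau(u\to T_v)\ge1$ for every $u\in U$. So you may fix a $3$-face $f_w^1=[wzw_2]$.
\item Any other outer $3$-face $f_u^1$ then has a false apex $u_2\in X_1$. The one exception is $u=x$ with $d(w)=6$, where $x_2\in X_2^a$ is possible but forces $d(f_w^3)=3$.
\item A false $w_2$ lies in $X_1\cup X_2^a\cup X_3$.
\end{itemize}
Every other option yields an explicit $6$-cycle.

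This gives $m_{4^+}\ge2$ at the vertices away from $w$ and localizes the deficit at $w$ (and possibly $z$). There it is covered in one of three ways:
\begin{itemize}
\item by a $5^+$-face via (R2), contributing at least $\frac15$;
\item by (r2) from $T_{x_2}$ with $x_2\in X_1$;
\item by a $6$-cycle such as $xyzw_2w^*wx$.
\end{itemize}
For the (r2) route, $T_{x_2}$ has $\lambda(T_{x_2})\le1$ and is saturated. Indeed $\sigma(T_{x_2})\ge-1+\frac12+\frac25+\frac13=\frac7{30}$, once $6$-cycles are used to show $\tau(f_w^3\to T_{x_2})\ge\frac13$.

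So the unsaturated-neighbour issue is not resolved by showing that adjacent unsaturated $X_4$-structures force a $6$-cycle. Instead, one never draws weight from $T_{w_2}$ when $w_2\in X_2^a\cup X_3$, and one proves $T_{x_2}$ saturated when $x_2\in X_1$.
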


\section{Proof of Theorem \ref{X_2}}
We assume, w.l.o.g., that $d(f_0)=d(f_3)=3$ and $d(f_1), d(f_2)\ge4$.
Note that $w(T_v)=w(f_0)+w(f_3)=(3-4)\times2=-2$.
In this case, $x=y_1$, $y=x_{d(x)-1}$ and $w=x_1$.
By Claim \ref{6+v} and (R0), $\tau(u\rightarrow T_v)\ge \frac{2}{5}$ for $u\in\{y,w\}$ and $\tau(x\rightarrow T_v)\ge \frac{1}{3}\times2=\frac{2}{3}$.
If $y\in V^o$, then $\tau(y\rightarrow f_0)\ge\frac{1}{2}+\frac{7}{15}=\frac{29}{30}$ by (R0).
Hence, $w'(T_v)\ge-2+\frac{29}{30}+\frac{2}{5}+\frac{2}{3}=\frac{1}{30}$.
So, $y,w\in V^i$  by symmetry.
If $x\in V^o$, then $\tau(x\rightarrow T_v)\ge\frac{14}{15}\times2=\frac{28}{15}$ by (R0).
Therefore, we have that $w'(T_v)\ge-2+\frac{28}{15}+\frac{2}{5}\times2=\frac{2}{3}$.
So, $x\in V^i$.

Suppose that $d(f_i)\ge5$ for $i=1,2$, then $\tau(f_i\rightarrow T_v)\ge\frac{d(f_i)-4}{d(f_i)-1}\ge\frac{1}{4}$  by (R2).
We first claim that $d(x)=6$ and $m_{4^+}(x)=0$; otherwise $\tau(x\rightarrow T_v)\ge \frac{2}{5}\times2=\frac{4}{5}$ by Claim \ref{6+v}, and then $w'(T_v)\ge-2+\frac{4}{5}+\frac{2}{5}\times2+\frac{1}{4}\times2=\frac{1}{10}$.
Next we claim that $d(y)=6$ and $m_{4^+}(y)=1$; otherwise $\tau(y\rightarrow T_v)\ge\frac{1}{2}$, which would give $w'(T_v)\ge-2+\frac{1}{2}+\frac{2}{5}+\frac{2}{3}+\frac{1}{4}\times2=\frac{1}{15}$.
By symmetry, $d(w)=6$ and $m_{4^+}(w)=1$.
If $y_2=x_4$ is a true vertex, then define $x^*=x_3$ if $x_3$ is true and $x^*=x_2$ otherwise; similarly, define $y^*=y_3$ if $y_3$ is true and $y^*=y_4$ otherwise.
It is easy to check that  $S=\{x,y,w,x_4,x^*,y^*\}$ is full and   we set $C_6=xwyy^*x_4x^*x$.
Thus, $y_2=x_4$ and $x_2=w_4$ are false  by symmetry.
By (P1), $x_3, y_3, w_3$ are true.
Obviously, $S=\{x,y,w,x_3,y_3,w_3\}$ is full and we set $C_6=xy_3yx_3ww_3x$.
Hence, either $f_1$ or $f_2$ is a 4-face. Then the proof is split into two cases as follows  by symmetry.

\medskip
\noindent{\bf Case 1.} $d(f_1)\ge5$ and $d(f_2)=4$.
\medskip

Note that $f_2=[wvzw_1]$ and $\tau(f_1\rightarrow T_v)\ge\frac{d(f_1)-4}{d(f_1)-1}\ge\frac{1}{4}$ by (R2).
Suppose that $w_1$ is a true vertex, then $S=U\cup\{w_1\}$ is full.
If $d(f_y^1)=3$, then $f_y^1=[xy_2y]$.
When $y_2$ is a true vertex, we have $d_H(y_2, u)\le 2$ for each $u\in U$.
Moreover, since $d(w)\ge6$ by (Q4), $P=y_2xww_1$ is a trail satisfying the conditions of Claim \ref{3path}.
By Claim \ref{3path}, $y_2\ne w_1$, which implies that $S\cup\{y_2\}$ is full.
We then set $C_6=xzw_1wyy_2x$.
When $y_2$ is false, we claim that $d(f_y^1)_{xy_2}, d(f_y^1)_{yy_2}\ge4$.
Otherwise, $P_1=x_{d(x)-3}xww_1$ and $P_2=y_3yxww_1$ satisfy the conditions of Claim \ref{3path}.
Clearly, $S\cup\{x_{d(x)-3}\}$ and $S\cup\{y_3\}$ are full, and we set $C_6=xzw_1wyx_{d(x)-3}x$ and $C_6=xzw_1wyy_3x$.
Consequently, $m_{4^+}(x)\ge1$ and $m_{4^+}(y)\ge2$.
By Claim \ref{6+v}, $\tau(x\rightarrow T_v)\ge\frac{2}{5}\times2=\frac{4}{5}$ and $\tau(y\rightarrow T_v)\ge\frac{1}{2}$.
Then we claim that $d(w)=6$ and $m_{4^+}(w)=1$.
Otherwise, $\tau(w\rightarrow T_v)\ge\frac{1}{2}$ and then $w'(T_v)\ge-2+\frac{1}{2}\times2+\frac{4}{5}+\frac{1}{4}=\frac{1}{20}$ by Claim \ref{6+v}. We define  $w^*=w_2$ if $w_2$ is true, and $w^*=w_3$  otherwise.
It is easy to check that $S\cup\{w^*\}$ is full, and so $C_6=xzw_1w^*wyx$ is a 6-cycle.

Suppose that $w_1$ is a false vertex.
If $d(f_w^1)=3$, then the proof can be given similarly to the above discussion.
Hence  $d(f_w^1)\ge4$. By Claim \ref{6+v}, $\tau(w\rightarrow T_v)\ge\frac{1}{2}$.

\begin{claim}\label{xyz}
At least two of the following statements hold:
$(1)$\  $d(x)=6$ and $m_{4^+}(x)=0$;\
$(2)$\  $d(y)=6$ and $m_{4^+}(y)=1$;\
$(3)$\  $d(w)=6$ and $m_{4^+}(w)=2$.
\end{claim}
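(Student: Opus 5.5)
The argument is by contradiction through the weight count, in the style of the earlier steps. Suppose at most one of (1)--(3) holds. Then at least two of the three vertices $x,y,w$ are ``rich'', and I would show that this already forces $w'(T_v)\ge 0$. So I first record the baseline contributions in the present situation: $d(f_1)\ge5$, $d(f_2)=4$, $w_1$ false and $d(f_w^1)\ge4$. Here $\tau(f_1\to T_v)\ge\frac14$ by (R2) and $\tau(w\to T_v)\ge\frac12$, using $m_{4^+}(w)\ge2$ (both $f_2$ and $f_w^1$ are $4^+$-faces) together with Claim \ref{6+v}. Also $\tau(x\to T_v)\ge\frac23$, because $x$ is incident with both $f_0$ and $f_3$. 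Finally $\tau(y\to T_v)\ge\frac25$, because $f_1$ is a $4^+$-face at $y$. The baseline total is therefore $-2+\frac23+\frac25+\frac12+\frac14=-\frac{11}{60}$, so the gain from rich vertices must reach $\frac{11}{60}$.

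Next I quantify the gain when each statement fails, using Claim \ref{6+v}; recall that $x,y,w\in V^i$.

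If (1) fails, then either $d(x)\ge7$, giving $2\cdot\frac37=\frac67$, or $d(x)=6$ with $m_{4^+}(x)\ge1$, giving $2\cdot\frac25=\frac45$. The gain is at least $\frac{2}{15}$.

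If (2) fails, then since $m_{4^+}(y)\ge1$ already, either $d(y)\ge7$ (giving at least $\frac12$) or $d(y)=6$ with $m_{4^+}(y)\ge2$ (giving $\frac12$). The gain is at least $\frac1{10}$.

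If (3) fails, then either $d(w)\ge7$ with $m_{4^+}(w)\ge2$ (giving $\frac35$) or $d(w)=6$ with $m_{4^+}(w)\ge3$ (giving $\frac23$). The gain is at least $\frac1{10}$.

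Any two of these gains sum to at least $\frac15=\frac{12}{60}>\frac{11}{60}$. Hence if two of the statements fail, $w'(T_v)\ge -\frac{11}{60}+\frac15=\frac1{60}>0$. This holds because (r1)--(r2) only add weight to unsaturated $T_v$, and $T_v$ never sends weight away when it is unsaturated. This contradicts the standing assumption that we are in the unsaturated situation, so at least two of (1)--(3) hold.

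The main thing to double-check is the baseline bookkeeping. First, $f_w^1\ne f_2$, since $w_1$ is false and $f_w^1$ is the next face around $w$; so $m_{4^+}(w)\ge2$ is genuine. Second, $y$'s $4^+$-face $f_1$ is counted correctly. Third, the (R2) share from $f_1$ is indeed $\ge\frac{d(f_1)-4}{d(f_1)-1}$, because $f_1$ has at least one non-false-3-face neighbour, namely through the edge shared with $f_2$ or an edge at a true vertex. If some of these margins turn out tighter, I would refine the cases using the exact values $\frac47$ ($d=7$, $m_{4^+}=1$) and similar, but the slack of $\frac1{60}$ suggests the plain estimates suffice.
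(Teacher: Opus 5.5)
Your proposal is correct and follows essentially the same route as the paper. Assuming at most one of (1)--(3) holds, the paper treats the three pairs of failing statements separately: it combines the Claim~\ref{6+v} bounds $\tau(x\to T_v)\ge\frac45$, $\tau(y\to T_v)\ge\frac12$ and $\tau(w\to T_v)\ge\frac35$ (each when the respective statement fails) with the baseline values and $\tau(f_1\to T_v)\ge\frac14$, obtaining $w'(T_v)\ge\frac1{20},\frac1{60},\frac1{20}$, so your baseline of $-\frac{11}{60}$ plus pairwise gains of at least $\frac15$ is a compact repackaging of exactly these three computations.
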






\proof Suppose that at most one of  (1)--(3) holds.  We  consider three cases  as follows.
If (1) and (2) are false, then  $\tau(x\rightarrow T_v)\ge\frac{2}{5}\times2=\frac{4}{5}$ and  $\tau(y\rightarrow T_v)\ge\frac{1}{2}$ by Claim \ref{6+v}.
Hence $w'(T_v)\ge-2+\frac{4}{5}+\frac{1}{2}+\frac{1}{2}+\frac{1}{4}=\frac{1}{20}$.
If (2) and (3) are false, then  $\tau(y\rightarrow T_v)\ge\frac{1}{2}$ and $\tau(w\rightarrow T_v)\ge\frac{3}{5}$ by Claim \ref{6+v}.
Therefore $w'(T_v)\ge-2+\frac{2}{3}+\frac{1}{2}+\frac{3}{5}+\frac{1}{4}=\frac{1}{60}$.
If (1) and (3) are false, then  $\tau(x\rightarrow T_v)\ge\frac{2}{5}\times2=\frac{4}{5}$ and  $\tau(w\rightarrow T_v)\ge\frac{3}{5}$ by Claim \ref{6+v}.
Thus $w'(T_v)\ge-2+\frac{4}{5}+\frac{2}{5}+\frac{3}{5}+\frac{1}{4}=\frac{1}{20}$.
\qed

\medskip

Now we have to  discuss the following  two subcases.

\medskip
\noindent{\bf Case 1.1.} (2) and (3) are true.
\medskip

First, define $w^*=w_4$ if $w_4$ is true, otherwise $w^*=w_3$. Set $S=\{x,y,w,w^*\}$, which is full.
If $y_2$ is true, define $y^*=y_3$ if $y_3$ is true, otherwise $y^*=y_4$. Taking $P=y^*yxww^*$ with $z\in Y$ and $y_2\in Z$, Claim \ref{3path} gives $y^*\ne w^*$; thus $S\cup\{y_2,y^*\}$ is full and we set $C_6=xy_2y^*yww^*x$.
If $y_2$ is false, then $y_3$ is true. Define $y^*=y_4$ if $y_4$ is true,  otherwise $y^*=y_5$. By the same argument as above, $S\cup\{y_3,y^*\}$ is full and we set $C_6=xy_3y^*yww^*x$.

\medskip
\noindent{\bf Case 1.2.} (1) is true.
\medskip

This implies that exactly one of (2) and (3) holds. If $x_i$ are true vertices for all $i\in[2,4]$, then we have $C_6=xwx_2x_3x_4yx$.
Now we consider the remaining possibilities, depending on which of $x_2,x_3,x_4$ are false.

$\bullet$ If $x_2$ is false while $x_3,x_4$ are true, then two subcases arise.
If (2) holds, define $y^*=y_3$ if $y_3$ is true and $y^*=y_4$ otherwise.
Then $S=\{x,y,w,x_3,x_4,y^*\}$ is full and we set $C_6=xwx_3x_4y^*yx$.
If (3) holds, then $w_3$ is true and $G''$ contains $C_6=xx_3x_4yww_3x$.
The case where $x_4$ is false while $x_2,x_3$ are true is symmetric and is handled similarly.

$\bullet$ If $x_3$ is false while $x_2,x_4$ are true, we may assume without loss of generality that (2) holds.
Define $y^*=y_3$ if $y_3$ is true and $y^*=y_4$ otherwise.
Then $S=\{x,y,w,x_2,x_4,y^*\}$ is full and $G''$ contains $C_6=xx_4y^*ywx_2x$.

$\bullet$ If $x_2$ and $x_4$ are false while $x_3$ is true, then two subcases occur.
If (2) holds, then $y_3$ is true. Define $y^*=y_4$ if $y_4$ is true,  otherwise $y^*=y_5$.
Then $S=\{x,y,w,x_3,y_3,y^*\}$ is full and we set $C_6=xx_3wyy^*y_3x$.
Otherwise (3) holds, so $w_3$ is true. The above discussion implies $d(f_y^2)\ge4$.
By Claim \ref{6+v}, $\tau(y\rightarrow T_v)\ge \frac{1}{2}$.
If  $\tau(f_1\rightarrow T_v)\ge \frac{1}{3}$,  then $w'(T_v)\ge -2+\frac{2}{3}+\frac{1}{2}\times2+\frac{1}{3}=0$, and we are done.
This implies $d(f_1)=5$ and $m_{4^+}(f_1)=1$ by (R2).
Write $f_1=[vyy_5z_1z]$.
If $y_5$ and $z_1$ are true, then Claim \ref{3path} applied to $P=z_1y_5yxw$ gives $z_1\ne w$, so $U\cup\{y_5,z_1\}$ is full and we set $C_6=xwyy_5z_1zx$.
If $y_5$ is true and $z_1$ is false, then $d((f_1)_{z_1y_5})=3$.
Let $(f_1)_{z_1y_5}=[z_1y_5s]$ with a true vertex $s$,  then $U\cup\{y_5,s\}$ is full, yielding $C_6=xwyy_5szx$ similarly.
If $y_5$ is false and $z_1$ is true, then $y_4$ is true. If $U\cup\{y_4,z_1\}$ is full, we set $C_6=xwyy_4z_1zx$, see Fig.\,\ref{fig3}(a). Otherwise $z_1=w$, and then $S=\{x,y,w,w_3,x_3,y_4\}$ is full, giving $C_6=ww_3xx_3yy_4w$, see Fig.\,\ref{fig3}(b).

    \begin{figure}[H]
 	\centering
 \epsfig{file=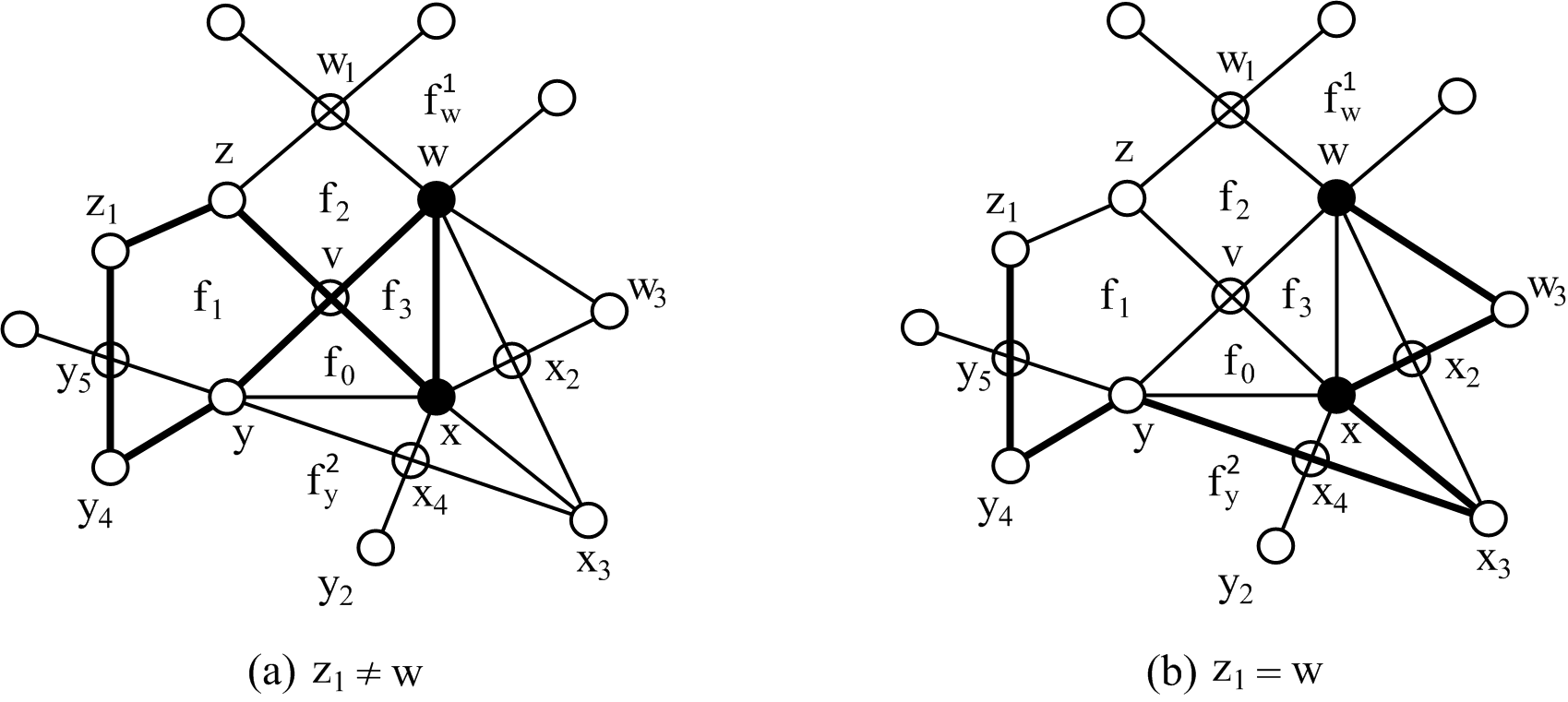, width=12.5cm}
 	\caption{ Two configurations in the proof of Case 1.2.}
 	\label{fig3}
 \end{figure}

\medskip
\noindent{\bf Case 2.} $d(f_1)=d(f_2)=4$.
\medskip

Let $f_1=[vyy_{d(y)-1}z]$ and $f_2=[vzw_1w]$.
If $w_1$ and $y_{d(y)-1}$ are true vertices, then
it is easy to check that $U\cup\{w_1,y_{d(y)-1}\}$ is full, and we set $C_6=xyy_{d(y)-1}zw_1wx$.
By symmetry, we  consider  following two subcases.

\medskip
\noindent\textbf{Case 2.1.} $w_1$ is false and $y_{d(y)-1}$ is true.
\medskip

Note that $S=U\cup\{y_{d(y)-1}\}$ is full.
If $d(f_w^1)=3$, then $w_2$ is true and we set $C_6=xyy_{d(y)-1}zw_2wx$.
Thus $d(f_w^1)\ge4$.
Now consider $f_x^1$.
Suppose $d(f_x^1)=3$, i.e., $f_x^1=[xwx_2]$.
If $x_2$ is true, then Claim \ref{3path} applied to $P=x_2xyy_{d(y)-1}$ gives $x_2\ne y_{d(y)-1}$; hence $S\cup\{x_2\}$ is full and we set $C_6=xzy_{d(y)-1}ywx_2x$.
If $x_2$ is false, then $(f_x^1)_{x_2x}$ and $(f_x^1)_{x_2w}$ are $4^+$-faces, otherwise $G''$ contains a $6$-cycle by Claim \ref{3path}.
Consequently $m_{4^+}(x)\ge1$ and $m_{4^+}(w)\ge3$.
By Claim \ref{6+v}, $\tau(x\to T_v)\ge\frac{2}{5}\times2=\frac{4}{5}$ and $\tau(w\to T_v)\ge\frac{2}{3}$.

Suppose $d(f_y^{d(y)-3})=d(f_y^{d(y)-2})=3$, define $y^*=y_{d(y)-2}$ if $y_{d(y)-2}$ is true and $y^*=y_{d(y)-3}$ otherwise. Then $S\cup\{y^*\}$ is full, and we set $C_6=xzy_{d(y)-1}y^*ywx$.
Hence $d(f_y^{d(y)-3})\ge4$ or $d(f_y^{d(y)-2})\ge4$, so $m_{4^+}(y)\ge2$.
We claim that $d(y)=6$ and $m_{4^+}(y)=2$; otherwise $\tau(y\to T_v)\ge\frac{3}{5}$ by Claim \ref{6+v}, and then $w'(T_v)\ge-2+\frac{4}{5}+\frac{3}{5}+\frac{2}{3}=\frac{1}{15}$.
Thus $d(f_y^1)=d(f_y^2)=3$ and $\tau(y\to T_v)=\frac{1}{2}$ by Claim \ref{6+v}.
If $y_2$ is true, then $f_y^1$ is an internal true $3$-face with
$\sigma(f_y^1)\ge 3-4+\frac{1}{2}+\frac{2}{5}+\frac{1}{3}=\frac{7}{30}$ after (R0)--(R2) are carried out.
By (r1), $\tau(f_y^1\to f_0)\ge\frac{7}{30}\times\frac{1}{3}=\frac{7}{90}$.
Consequently $w'(T_v)\ge-2+\frac{4}{5}+\frac{1}{2}+\frac{2}{3}+\frac{7}{90}=\frac{2}{45}$.
If $y_2$ is false, then $y_3$ is true. We claim that $d(x)=6$ and $m_{4^+}(x)=1$; otherwise $\tau(x\to T_v)\ge\frac{1}{2}\times2=1$ by Claim \ref{6+v}, and then $w'(T_v)\ge-2+1+\frac{1}{2}+\frac{2}{3}=\frac{1}{6}$.
Thus $x_3$ is true.
We now consider two possibilities.

$\bullet$ Suppose $f_x^1$ is a $4^+$-face.
If $d(f_x^1)\ge5$, then $\tau(f_x^1\to f_3)\ge\frac{d(f)-4}{d(f)}\ge\frac{1}{5}$ by (R2),  and hence $w'(T_v)\ge-2+\frac{4}{5}+\frac{1}{2}+\frac{2}{3}+\frac{1}{5}=\frac{1}{6}$.
Otherwise $d(f_x^1)=4$.
If $x_2$ is true, then Claim \ref{3path} applied to $P=x_2xyy_5$ gives $x_2\ne y_5$, so $S'=\{x,y,z,x_2,x_3,y_5\}$ is full and we set $C_6=xx_2x_3yy_5zx$.
If $x_2$ is false, then $f_x^1=[xww_{d(w)-2}x_2]$. By (P1), $w_{d(w)-2}$ is true. Claim \ref{3path} applied to $P=w_{d(w)-2}wxyy_3$ gives $w_{d(w)-2}\ne y_3$, so $S'=\{x,y,w,x_3,y_3,w_{d(w)-2}\}$ is full and we set $C_6=xy_3yww_{d(w)-2}x_3x$.

$\bullet$ Suppose $f_x^1$ is a $3$-face. Then by the previous discussion $d(f_x^2)\ge4$.
Let $x,w,s_1,s_2$ be the neighbors of $x_2$ and let $g=(f_x^2)_{x_2s_2}$.
If $d(g)=3$, then $S'=\{x,y,w,x_3,s_1,s_2\}$ is  full obviously and we set $C_6=xx_3yws_2s_1x$.
Hence $d(g)\ge4$, so $x_2\in X_1$.
It is not difficult to see that $\lambda(T_{x_2})\le1$ and $\sigma(T_{x_2})\ge-1+\frac{2}{5}+\frac{2}{3}=\frac{1}{15}$ after (R0)--(R2) are implemented.
By (r2), $\tau(T_{x_2}\to T_v)\ge\frac{1}{15}$.
Consequently $w'(T_v)\ge-2+\frac{4}{5}+\frac{1}{2}+\frac{2}{3}+\frac{1}{15}=\frac{1}{30}$.

\medskip
\noindent\noindent{\bf Case 2.2.} $w_1$  and $y_{d(y)-1}$ are  false.
\medskip

If $d(f_w^1)=3$ or $d(f_y^{d(y)-2})=3$, then the proof is similar to that in Case 2.1.
Hence we may assume $d(f_w^1)\ge4$ and $d(f_y^{d(y)-2})\ge4$.
Note that by Claim \ref{6+v}, $\tau(u\rightarrow T_v)\ge\frac{1}{2}$ for $u\in\{y,w\}$.
If $d(x)\ge8$, or $d(x)=7$ with $m_{4^+}(x)\ge1$, or $d(x)=6$ with $m_{4^+}(x)\ge2$, then $\tau(x\to T_v)\ge\frac{1}{2}\times2=1$ by (R1).
Therefore, $w'(T_v)\ge-2+1+\frac{1}{2}\times2=0$.

If $d(x)=7$ with $m_{4^+}(x)=0$, then $\tau(x\to T_v)\ge\frac{3}{7}\times2=\frac{6}{7}$ by Claim \ref{6+v}.
If in addition  $m_{4^+}(y)\ge3$, then $w'(T_v)\ge-2+\frac{6}{7}+\frac{2}{3}+\frac{1}{2}=\frac{1}{42}$ by Claim \ref{6+v}.
Hence, $m_{4^+}(y)=m_{4^+}(w)=2$.
Suppose that  $x_2$ and $x_5$ are true vertices, then we define $x^*=x_3$ if $x_3$ is true, otherwise $x^*=x_4$ .
Obviously, $S=\{x,y,w,x_2,x_5,x^*\}$ is full and we set $C_6=xx_5ywx_2x^*x$.
Suppose that  $x_2$ is a true vertex and $x_5$ is a false vertex, then $y_3$ is true and we define $x^*$ in the same way.
It is easy to check that $S=\{x,y,w,x_2,y_3,x^*\}$ is full and we set $C_6=xy_3ywx_2x^*x$.
Suppose that  $x_2$ and $x_5$ are false vertices,
then $x_3, x_4, y_3$ are true vertices by (P1).
Obviously, $S=\{x,y,w,x_3,x_4,y_3\}$ is full and we set $C_6=xx_4x_3wyy_3x$.
Now it remains to consider the case $d(x)=6$, which falls into two subcases.

\medskip
\noindent{\bf Case 2.2.1.} $d(x)=6$ and $m_{4^+}(x)=1$.
\medskip

Suppose that $d(f_x^1)\ge4$, then $\tau(x\rightarrow T_v)\ge \frac{2}{5}\times2=\frac{4}{5}$ and $\tau(w\rightarrow T_v)\ge \frac{2}{3}$ by Claim \ref{6+v}.
If $d(f_x^1)\ge5$, then $\tau(f_x^1\rightarrow f_3)\ge \frac{1}{5}$ by (R2) and hence $w'(T_v)\ge-2+\frac{4}{5}+\frac{1}{2}+\frac{2}{3}+\frac{1}{5}=\frac{1}{6}$.
Thus $d(f_x^1)=4$, i.e., $f_x^1=[xww_{d(w)-2}x_2]$.
By (P1), at most one  of $x_2$ and $w_{d(w)-2}$ is false.
Define $x^*=x_4$ if $x_4$ is true, and $x^*=x_3$ otherwise.
We claim that   $m_{4^+}(w)=3$;
otherwise $\tau(w\rightarrow T_v)\ge\frac{d(w)-4}{d(w)-4}=1$ by (R1) and then $w'(T_v)\ge-2+\frac{4}{5}+\frac{1}{2}+1=\frac{3}{10}$.
If $x_2$ and $w_{d(w)-2}$ are true, then Claim \ref{3path} asserts  that $w_{d(w)-2}\ne x^*$ by taking $P=w_{d(w)-2}wxx^*$ and hence $S=\{x,y,w,x_2,x^*,w_{d(w)-2}\}$ is  full.
We set $C_6=xx^*yww_{d(w)-2}x_2x$.
If $x_2$ is true and $w_{d(w)-2}$ is false, then $w_{d(w)-3}$ is true.
Then $S=\{x,y,w,x_2,x^*,w_{d(w)-3}\}$ is full, and we set $C_6=xx^*yww_{d(w)-3}x_2x$.
If $x_2$ is false and $w_{d(w)-2}$ is true, then $x_3$ is true.
Define $w^*=w_{d(w)-3}$ if $w_{d(w)-3}$ is true, and $w^*=w_{d(w)-4}$ otherwise.
Then $S=\{x,y,w,x_3,w_{d(w)-2},w^*\}$ is full,
and we set $C_6=xyww^*w_{d(w)-2}x_3x$.

Next,  by symmetry we consider the case where $d(f_x^2)\ge4$ and $d(f_x^i)=3$ for $i\in\{1,3,4\}$.
For some $u\in\{x,y\}$. If  $u_2$ is true,  then $f_u^1$ is a true 3-face with $\sigma(f_u^1)\ge \frac{1}{2}+\frac{2}{5}+\frac{1}{3}=\frac{7}{30}$ after (R0)--(R2) are implemented. By (r1), $\tau(f_u^1\rightarrow T_v)\ge\frac{7}{30}\times\frac{1}{3}=\frac{7}{90}$.
Then we claim that $m_{4^+}(y)=m_{4^+}(w)=2$; otherwise  $w'(T_v)\ge-2+\frac{4}{5}+\frac{1}{2}+\frac{2}{3}+\frac{7}{90}=\frac{2}{45}$ by Claim \ref{6+v}.
Suppose that $x_2$ is true, then $f_x^1$ is a true 3-face.
Define $x^*=x_4$ if $x_4$ is true, otherwise $x^*=x_3$;  define $w^*=w_{d(w)-3}$ if $w_{d(w)-3}$ is true, otherwise $w^*=w_{d(w)-4}$.
By taking $P=w^*wxx^*$, Claim  \ref{3path} implies that $w^*\ne x^*$, so $S=\{x,y,w,x_2,w^*,x^*\}$ is full, and we set $C_6=xx^*yww^*x_2x$.
Hence, $x_2$ is false.
Suppose that $y_2$ is true, then $f_y^1$ is a true 3-face.
According to the above discussion, $w_{d(w)-3}$ is  true.
Define $y^*=y_3$ if $y_3$ is true, otherwise $y^*=y_4$.
By taking $P=y^*yxww_{d(w)-3}$, Claim  \ref{3path}
implies that $y^*\ne w_{d(w)-3}$ and hence $S=\{x,y,w,x_4,y^*,w_{d(w)-3}\}$ is full.
It suffices to set $C_6=xx_4y^*yww_{d(w)-3}x$.

Now suppose that $x_2$ and  $x_4$ are both false.
By (P1), $x_3$ is true.
Let $x,w,s_1,s_2$ be the neighbors of $x_2$ in $H$ and let $g=(f_x^2)_{x_2s_2}$.
Since $d(f_x^2)\ge4$, we have $s_2\ne x_3$.
If $d(g)=3$, then  $C_6=xx_3yws_2s_1x$ is a 6-cycle; thus $d(g)\ge4$.
If $d(f_w^{d(w)-3})\ge4$, then $x_2\in X_1$.
As in Case 3.2.1, we obtain that $\tau(T_{x_2}\rightarrow T_v)\ge\frac{1}{15}$ and hence  $w'(T_v)\ge-2+\frac{4}{5}+\frac{1}{2}+\frac{2}{3}+\frac{1}{15}=\frac{1}{30}$ by Claim \ref{6+v}  and (r2).
Thus $d(f_w^{d(w)-3})=3$.
If $d(f_x^2)=4$, then  $C_6=xyx_3s_2ws_1x$  is a 6-cycle.
If $d(f_x^2)=5$, write $f_x^2=[xx_2s_2s_3x_3x]$.
Suppose that $s_3$ is true, then $S=\{x,y,w,x_3,s_2,s_3\}$ is full  and we set $C_6=xyx_3s_3s_2wx$.
Otherwise, $s_3$ is a false vertex.
Similarly, we have that $(f_x^2)_{x_3s_3}$ is a $4^+$-face.
Thus, $m_{4^+}(f_x^2)\ge2$  and then $\tau(f_x^2\rightarrow f_x^1)\ge\frac{5-4}{5-2}=\frac{1}{3}$ by (R2).
If $d(f_x^2)\ge6$, then $\tau(f_x^2\rightarrow f_x^1)\ge\frac{d(f_x^2)-4}{d(f_x^2)-1}=\frac{2}{5}$ by (R2).
Consequently, $d(f_x^2)\ge5$ and $\tau(f_x^2\rightarrow f_x^1)\ge\frac{1}{3}$.

    \begin{figure}[H]
 	\centering
 \epsfig{file=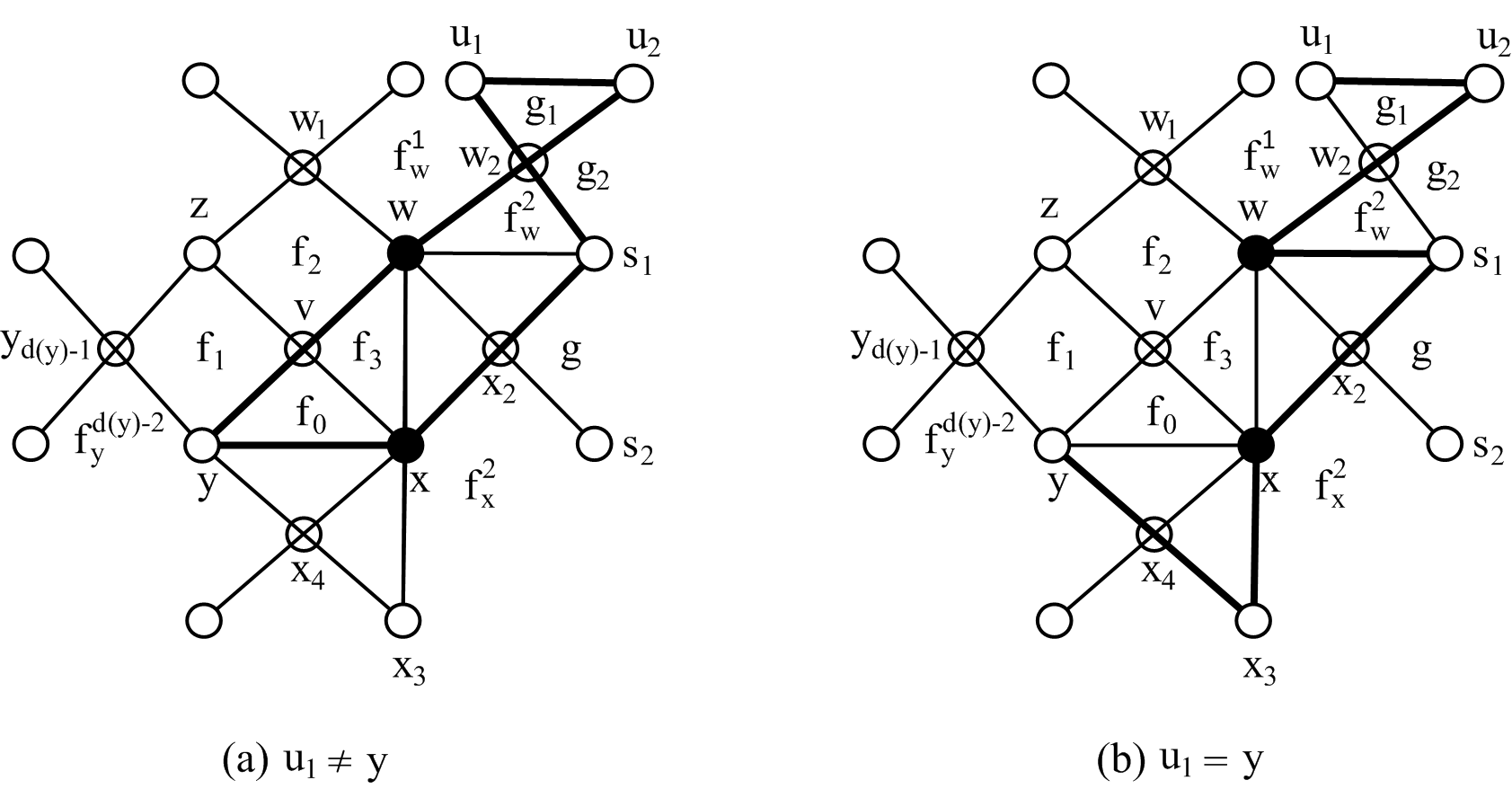, width=12.5cm}
 	\caption{ Two configurations in the proof of Case 2.2.1.}
 	\label{fig4}
 \end{figure}

We claim that $d(w)=6$ and $m_{4^+}(w)=2$.
Otherwise, $\tau(w\rightarrow T_v)\ge\frac{3}{5}$ and $\tau(w\rightarrow T_{x_2})\ge\frac{3}{5}\times2=\frac{6}{5}$ by Claim \ref{6+v}. By (R0)--(R1),   $\tau(s_1\rightarrow T_{x_2})\ge\frac{2}{5}$.
Thus after (R0)--(R2) are carried out,   $\sigma(T_{x_2})\ge -2+\frac{6}{5}+\frac{2}{5}\times2+\frac{1}{3}=\frac{1}{3}$, so $T_{x_2}$ is saturated.
Since $\lambda(T_{x_2})\le2$,  (r2) gives $\tau(T_{x_2}\rightarrow T_v)\ge\frac{1}{3}\times\frac{1}{2}=\frac{1}{6}$, and then $w'(T_v)\ge-2+\frac{4}{5}+\frac{1}{2}+\frac{3}{5}+\frac{1}{6}=\frac{1}{15}$.
If $w_2$ is true, then Claim \ref{3path} applied to  $P=w_2s_1wxx_3$ yields $w_2\ne x_3$,  so $S=\{x,y,w,x_3,s_1,w_2\}$ is full and $C_6=xx_3yww_2s_1x$  is a $6$-cycle.
Hence $w_2$ is false.
Let  $s_1,w,u_1,u_2$ be the neighbors  of   $w_2$, and let $f_w^2,f_w^1,g_1,g_2$ be its  incident faces in $H$.
If $d(g_2)=3$, then we  set $C_6=xx_3ywu_2s_1x$.
If $d(g_1)=3$, then $S=\{x,y,w,s_1,u_1,u_2\}$ is full  when $u_1\ne y$ and  we set $C_6=xywu_2u_1s_1x$, see Fig.\,\ref{fig4}(a).
When $u_1=y$, $S=\{x,y,w,x_3,s_1,u_2\}$ is full and  we set $C_6=yu_2ws_1xx_3y$, see Fig.\,\ref{fig4}(b).
Thus $d(g_1)\ge4$, $d(g_2)\ge4$,  so $w_2\in X_1$.
By (R0)--(R1), $\tau(s_1\rightarrow T_{x_2})\ge \frac{1}{2}$ and $\tau(s_1\rightarrow T_{w_2})\ge \frac{1}{2}$.
After (R0)--(R2) are implemented, $\sigma(T_{w_2})\ge-1+\frac{1}{2}\times2=0$ and $\sigma(T_{x_2})\ge-2+\frac{1}{2}\times3+\frac{2}{5}+\frac{1}{3}=\frac{7}{30}$.
Therefore, $\lambda(T_{x_2})=1$ and $\tau(T_{x_2}\rightarrow T_v)\ge\frac{7}{30}$ by (r2).
Consequently, $w'(T_v)\ge-2+\frac{4}{5}+\frac{1}{2}\times2+\frac{7}{30}=\frac{1}{30}$.

\medskip
\noindent{\bf Case 2.2.2.} $d(x)=6$ and $m_{4^+}(x)=0$.
\medskip

If $m_{4^+}(y),m_{4^+}(w)\ge3$, then $w'(T_v)\ge-2+\frac{2}{3}\times3=0$.
Hence, w.l.o.g.,  assume that $m_{4^+}(y)=2$.
Suppose  $x_4$ is  true.
Define $y^*=y_3$ if $y_3$ is true, otherwise $y^*=y_4$; define $x^*=x_3$ if $x_3$ is true, otherwise $x^*=x_2$.
Obviously, $S=\{x,y,w,x_4,x^*,y^*\}$ is full and we set $C_6=xx^*x_4y^*ywx$.
Otherwise, $x_4$ is  false and hence $x_3, y_3$ are true.
If $x_2$ is true, then  $C_6=xy_3ywx_2x_3x$ is a $6$-cycle;
hence we may assume $x_2$ is false.

\begin{claim}\label{T_{x_2}}
$x_2\in X_2^a$ and $\tau(f_w^{d(w)-3}\rightarrow T_{x_2})\ge \frac{1}{3}$.
\end{claim}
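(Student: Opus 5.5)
The argument stays inside the configuration of Case 2.2.2 and works by contradiction: every time the claimed structure fails, we exhibit a $6$-cycle in $G''$, contradicting (Q1).

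\textbf{The local configuration.} Since $d(x)=6$ and $m_{4^+}(x)=0$, all faces at $x$ are $3$-faces. So $f_x^1=[xwx_2]$, $f_x^2=[xx_2x_3]$, $f_x^3=[xx_3x_4]$ and $f_x^4=[xx_4y]$, with $x_2,x_4$ false and $x_3,y_3$ true.

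\begin{itemize}
\item Reading the cyclic order at the false vertex $x_4$ as $x,x_3,y_3,y$, the crossing edges through $x_4$ give $xy_3,\,x_3y\in E(G'')$.
\item Let the neighbours of the false vertex $x_2$ be $x,w,s,x_3$ in cyclic order, so $s=w_{d(w)-3}$ and $x_2=w_{d(w)-2}$. The crossing through $x_2$ gives $xs,\,wx_3\in E(G'')$, and $s$ is true by (P1).
\item Through $v$ we have $wy\in E(G'')$.
\item The true edges $xw$, $xy$, $xx_3$, $yy_3$ are also available.
\end{itemize}

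\textbf{Step 1: $x_2\in X_2^a$.} The faces $f_x^1$ and $f_x^2$ are $3$-faces at $x_2$ sharing the edge $xx_2$. It remains to show that the other two faces at $x_2$ have length at least $4$.

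\begin{itemize}
\item If the face containing $wx_2s$ is a $3$-face, then $ws\in E(G'')$. This gives the $6$-cycle $C_6=xy_3yx_3wsx$.
\item If the face containing $sx_2x_3$ is a $3$-face, then $sx_3\in E(G'')$. This gives $C_6=xy_3ywx_3sx$.
\end{itemize}

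In both cases, fullness of the six vertices is checked as in the earlier cases. The only possible coincidences are $s=y_3$ or $s=y$, and these are excluded by Claim \ref{3path} applied to trails such as $sxyy_3$, whose two sides are populated by $v$- and $x_3$-side neighbours. Hence $x_2\in X_2^a$, and $T_{x_2}$ consists of $f_x^1$ and $f_x^2$.

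\textbf{Step 2: $d(g)\ge5$, where $g=f_w^{d(w)-3}$.} The face $g$ contains the boundary path $s\,x_2\,w$. Suppose $d(g)=4$, say $g=[wx_2st]$.

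\begin{itemize}
\item If $t$ is true, we get $C_6=xx_3ywtsx$.
\item If $t$ is false, then the crossing at $t$ gives $ws\in E(G'')$, and we get $C_6=xy_3yx_3wsx$ as in Step 1.
\end{itemize}

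\textbf{Step 3: $\tau(g\to T_{x_2})\ge\frac13$.} By (R2), the face $g$ sends $\frac{d(g)-4}{k}$ to each adjacent false $3$-face, where $k$ is the number of such faces.

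\begin{itemize}
\item If $d(g)\ge6$, then $k\le d(g)-1$. This is because the face across $x_2s$ is a $4^+$-face by Step 1. So $\tau\ge\frac{2}{5}$.
\item If $d(g)=5$, write $g=[wx_2sab]$. We need one more boundary edge of $g$ (namely $sa$, $ab$ or $bw$) whose neighbouring face is not a false $3$-face. Then $k\le3$ and $\tau\ge\frac13$.
\end{itemize}

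For the case $d(g)=5$, (P1) makes at most one of $a,b$ false.

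\begin{itemize}
\item If both $a,b$ are true, the face $g$ yields the path $wbas$. Then $C_6=xywbasx$ is a $6$-cycle, with distinctness again from Claim \ref{3path}.
\item If exactly one of $a,b$ is false, its crossing supplies a chord ($wa$ or $bs$). A false $3$-face across a remaining edge supplies a true third vertex. In every subcase this produces a $w$–$s$ path of length $2$ or $3$, which closes with $x$ and $y$ (or $x_3$, $y_3$) to a $6$-cycle.
\end{itemize}

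I expect this $d(g)=5$ subcase to be the main obstacle. It needs a careful enumeration of which of $a,b$ and which adjacent faces are false, each time checking fullness via Claim \ref{3path}, much as in Case 1.2.
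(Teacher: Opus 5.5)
Your Step~2 breaks down when $t$ is false, and the same error recurs in Step~3. Step~1 is the paper's own argument, using the same two cycles $xy_3yx_3wsx$ and $xy_3ywx_3sx$, with two small corrections. First, $s\ne y,y_3$ follows from simplicity of $G''$: $xs$, $xy$, $xy_3$ are distinct edges at $x$ (through $x_2$, uncrossed, through $x_4$). It does not follow from Claim~\ref{3path}, which cannot be applied to a trail containing the crossing edge $sx$. Second, $s=w_{d(w)-3}$ holds only when $f_w^{d(w)-3}$ is a $3$-face.

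\textbf{The case $g=[wx_2st]$ with $t$ false.} Here $w$ and $s$ are the two neighbours of $t$ along $g$, so they are consecutive in the rotation at $t$. The crossing at $t$ joins opposite neighbours: it joins $w$ and $s$ to the other two neighbours of $t$ and never yields $ws$. A purely structural argument does not close this case. If the other three faces at $t$ are $4^+$-faces, nothing local forces a $6$-cycle. Yet $\tau(g\to T_{x_2})=0$, because a $4$-face sends nothing under (R2).

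\textbf{What is missing.} The paper uses a discharging escape. Since $w$ already lies on the $4^+$-faces $f_2$, $f_w^1$ and $g$, if $m_{4^+}(w)\ge 4$ then (R1) gives $\tau(w\to T_v)\ge 1$. Hence $w'(T_v)\ge -2+\frac23+\frac12+1=\frac16$, and the case is finished without the claim. So one may assume $m_{4^+}(w)=3$. Then $f_w^{d(w)-4}=[w\,w_{d(w)-4}\,t]$ is a $3$-face, $w_{d(w)-4}$ is true and opposite to $s$ at $t$, and so $w_{d(w)-4}s\in E(G'')$. This gives the $6$-cycle $xx_3yww_{d(w)-4}sx$.

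\textbf{The case $d(g)=5$.} Your statement ``its crossing supplies a chord ($wa$ or $bs$)'' is false for the same reason. Write $g=[wx_2sab]$.
\begin{itemize}
\item If $b$ is false, the paper again uses $m_{4^+}(w)=3$ to get $w_{d(w)-4}a\in E(G'')$. This yields the cycle $xyww_{d(w)-4}asx$, or $yw_{d(w)-4}wx_3xy_3y$ when $a=y$.
\item If $a$ is false, look at the face across $ab$. If it is a $3$-face $[abs']$, then $s'$ is opposite $s$ at $a$, the crossing gives $s's$, and $xywbs'sx$ is a $6$-cycle. Otherwise this face is the second edge-neighbour of $g$ that is not a false $3$-face, besides the one across $x_2s$. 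That gives $\tau(g\to T_{x_2})\ge\frac{5-4}{5-2}=\frac13$.
\end{itemize}
Your $d(g)\ge 6$ bound is correct as written.
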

\proof
If $d(f_w^{d(w)-3})=3$, then $w_{d(w)-3}$ is true and we set $C_6=xy_3yx_3ww^{d(w)-3}x$.
If  $(f_x^2)_{x_2x_3}=[x_3x_2s]$ with $s$  true,
then $S=\{x,y,w,x_3,y_3,s\}$ is full and we set $C_6=xy_3ywx_3sx$.
Thus $d(f_w^{d(w)-3}), d((f_x^2)_{x_2x_3})\ge4$, and hence $x_2\in X_2^a$.

If $m_{4^+}(w)\ge4$, then $\tau(w\rightarrow T_v)\ge\frac{d(w)-4}{d(w)-4}=1$ by (R1),
and consequently $w'(T_v)\ge-2+\frac{2}{3}+\frac{1}{2}+1=\frac{1}{6}$.
Hence $m_{4^+}(w)=3$.
Write $f_w^{d(w)-3}=[ww_{d(w)-3}s_1\cdots s_{t-3}x_2]$ with $d(f_w^{d(w)-3})=t$.
If $t=4$, then $f_w^{d(w)-3}=[ww_{d(w)-3}s_1x_2]$.
Assume  $w_{d(w)-3}$  is true, then $S=\{x,y,w,x_3,s_1,w_{d(w)-3}\}$ is full and  we set $C_6=xx_3yww_{d(w)-3}s_1x$;
otherwise $w_{d(w)-3}$ is false,  $w_{d(w)-4}$ is true, and we set $C_6=xx_3yww_{d(w)-4}s_1x$ similarly.
Hence $t\ge5$.

If $t=5$, then $f_w^{d(w)-3}=[ww_{d(w)-3}s_1s_2x_2]$.
Assume  $s_1$ and $w_{d(w)-3}$ are true,
then Claim \ref{3path} applied to $P=s_1w_{d(w)-3}wxy$ gives $s_1\ne y$; thus $S=\{x,y,w,s_1,s_2,w_{d(w)-3}\}$ is full and $C_6=xyww_{d(w)-3}s_1s_2x$.
Assume  $s_1$ is true and $w_{d(w)-3}$ is false, then $w_{d(w)-4}$ is true;
we set $C_6=xyww_{d(w)-4}s_1s_2x$ if  $s_1\ne y$, and $C_6=yw_{d(w)-4}wx_3xy_3y$ if  $s_1= y$.
Assume $s_1$ is false and $w_{d(w)-3}$ is true, let $g=(f_w^{d(w)-3})_{s_1w_{d(w)-3}}$.
If $d(g)=3$, write $g=[s_1w_{d(w)-3}s']$, then $s'\ne y$ and we set $C_6=xyww_{d(w)-3}s's_2x$.
Hence $d(g)\ge4$, so  $m_{4^+}(f_w^{d(w)-3})\ge2$.
By (R2), $\tau(f_w^{d(w)-3}\rightarrow T_{x_2})\ge \frac{5-4}{5-2}=\frac{1}{3}$.
If $t\ge6$, then $m_{4^+}(f_w^{d(w)-3})\ge1$ and  $\tau(f_w^{d(w)-3}\rightarrow T_{x_2})\ge \frac{d(f_w^{d(w)-3})-4}{d(f_w^{d(w)-3})-1}=\frac{2}{5}$ by (R2).
Consequently, $\tau(f_w^{d(w)-3}\rightarrow T_{x_2})\ge \frac{1}{3}$.\qed

\begin{claim}\label{f_y^4}
$y_4\in X_1$ and $\tau(f_y^4\rightarrow T_{y_4})\ge\frac{2}{3}$.
\end{claim}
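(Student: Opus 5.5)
Throughout, work in the configuration of Case 2.2.2. We have $d(x)=6$, $m_{4^+}(x)=0$ and $m_{4^+}(y)=2$. The vertices $w_1$ and $y_{d(y)-1}$ are false, $x_4=y_2$ is false, $x_3,y_3$ are true, and $d(f_y^{d(y)-2})\ge 4$. First I fix $d(y)=6$: if $d(y)\ge 7$ with $m_{4^+}(y)\ge2$, Claim~\ref{6+v} gives $\tau(y\to T_v)\ge\frac35$. Together with $\tau(x\to T_v)\ge\frac23$, $\tau(w\to T_v)\ge\frac12$ and Claim~\ref{T_{x_2}}, this already makes $w'(T_v)\ge0$ (or nearly so; the charge of at least $\frac13$ entering $T_{x_2}$ is what covers any deficit). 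So $d(y)=6$, hence $y_5=y_{d(y)-1}$ is false and $f_y^4=f_y^{d(y)-2}$ is a $4^+$-face. Since $m_{4^+}(y)=2$, the $4^+$-faces at $y$ are exactly $f_1$ and $f_y^4$. Thus $f_y^1=[yxy_2]$, $f_y^2=[yy_2y_3]$ and $f_y^3=[yy_3y_4]$ are $3$-faces.

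\textbf{Step 1: $y_4$ is false.} Suppose $y_4$ is true. Since $y_2=x_4$ is false with neighbours $x,x_3,y_3,y$ in order, $G''$ contains the crossing edges $xy_3$ and $x_3y$. We also have the true edges $xx_3$, $yy_4$, $y_3y_4$ and the crossing edge $yw$ through $v$. From these, together with $w$ (and, if needed, $y^*$ or a neighbour of $x_2$ as in Claim~\ref{T_{x_2}}), I aim to close a $6$-cycle such as $xwy y_4y_3 \ast x$. Distinctness of the six vertices would come from Claim~\ref{3path} applied to a path like $y_4yxw$, exactly as in the earlier subcases. This contradicts (Q1).

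\textbf{Step 2: $y_4\in X_1$.} The false vertex $y_4$ has four incident faces: $f_y^3$ (a $3$-face), $f_y^4$ (a $4^+$-face) and two further faces $g_1,g_2$ at the two other neighbours $t_1,t_2$ of $y_4$. If $g_i$ were a $3$-face, its third vertex $t$ would be true by (P1). Then $t$ together with $y_3$ or $y$, the vertex $x$, and the crossing edges through $y_2$ or $v$ gives a $6$-cycle, for example $x y_3 t_i t \cdots$ or $y\,t_i\,t\cdots x w$. Fullness is again checked via Claim~\ref{3path}. Hence $g_1,g_2$ are $4^+$-faces, $m_3(y_4)=1$, and $y_4\in X_1$.

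\textbf{Step 3: the charge bound, the main obstacle.} The face $f_y^4$ contains the two false vertices $y_4,y_5$ on either side of $y$. Neither is adjacent to a false vertex, so $f_y^4=[y\,y_4\,a_1\cdots a_k\,y_5]$ with $a_1,a_k$ true, and $d(f_y^4)\ge5$. By (R2), $\tau(f_y^4\to T_{y_4})=\frac{d(f_y^4)-4}{N}$, where $N$ is the number of false $3$-faces adjacent to $f_y^4$. The edge $yy_5$ borders $f_1$, a $4$-face, and the edge $y_5a_k$ or $y_4a_1$ borders $g_i$, a $4^+$-face by Step~2, so these edges do not count toward $N$. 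For $d(f_y^4)=5$, with $f_y^4=[yy_4a_1a_2y_5]$, I will show that every edge other than $yy_4$ borders a $4^+$-face. Otherwise a triangle on $a_1a_2$, $a_2y_5$ or $y_4a_1$ yields a $6$-cycle through $y,y_3,x,w$ using the crossing edges. This gives $N=1$ and a charge of $1\ge\frac23$. For $d(f_y^4)=6$ I need $N\le3$, which again follows from excluding $6$-cycles around the face. For $d(f_y^4)\ge7$ the bound $N\le d(f_y^4)-2$ gives $\frac{d-4}{d-2}\ge\frac35$, which is too weak, so here I expect to argue that a long face through $y_4,y,y_5$ with many adjacent triangles again produces a $6$-cycle. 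This last count is the step I expect to be hardest.
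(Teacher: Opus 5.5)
Your skeleton matches the paper's: exclude configurations by $6$-cycles, then count the $4^+$-neighbours of $f_y^4$ for (R2). But there are real gaps.

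\textbf{The reduction to $d(y)=6$ fails.} For $d(y)=7$ and $m_{4^+}(y)=2$ you only get $\tau(y\to f_0)=\frac35$. Even using $\tau(w\to T_v)\ge\frac23$ (from $m_{4^+}(w)=3$ in Claim~\ref{T_{x_2}}), the total is $-2+\frac23+\frac35+\frac23=-\frac1{15}$. The charge entering $T_{x_2}$ does not cover this. The best available bound is $\sigma(T_{x_2})\ge-2+\frac23+\frac23+\frac13+\frac13=0$, so (r2) need not pass anything to $T_v$.

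\textbf{The missing ingredient} also closes your Steps~1 and~2. It is the crossing edge $wx_3$ through the false vertex $x_2$, whose neighbours are $x,w,s,x_3$. Together with $xy_3$ (through $x_4$) and $yw$ (through $v$) it gives the path $y\,w\,x_3\,x\,y_3$. Hence any common neighbour $t\notin\{x,w,x_3\}$ of $y$ and $y_3$ yields the $6$-cycle $xy_3tywx_3x$. The paper takes:
\begin{itemize}
\item $t=y_4$ if $y_4$ is true;
\item $t=y_5$ if $d(y)\ge7$ and $y_4$ is false, since then $y_3y_5$ crosses at $y_4$;
\item $t=s_1$ if the face of $y_4$ across $y_3y_4$ is a triangle, since $ys_1$ crosses at $y_4$.
\end{itemize}
This gives at once that $d(y)=6$, that $y_4$ is false, and that this face is a $4^+$-face. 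Your cycle $xwyy_4y_3\ast x$ cannot be closed from the edges you listed. Also, a triangle at $y_4$ is spanned by two of $y_4$'s own neighbours; there is no ``third vertex $t$''.

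\textbf{Step~3 is wrong at the start.} (P1) does not give $d(f_y^4)\ge5$. The face $f_y^4=[yy_4s_2y_5]$ with $s_2$ true is consistent with (P1), and then (R2) sends nothing, so the claim fails in that case. It must be excluded by a $6$-cycle: $xzs_2y_3ywx$, or $wx_3yy_3xzw$ if $s_2=w$. These use $zs_2$ through $y_5$ and $y_3s_2$ through $y_4$. The $5$-face is excluded outright by $xyy_3s_2u_1zx$, so there is no need to analyse its neighbouring faces.

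\textbf{Step~3 is incomplete at the end.} For $d(f_y^4)\ge6$ you need three $4^+$-neighbours, not two. Your Step~2 only examines faces at $y_4$. The edge $y_5a_k$ does not border one of those faces, so you have only $f_1$ and the face $g_1$ of $y_4$ across $y_4s_2$. Two $4^+$-neighbours suffice only for $d(f_y^4)\ge8$. You must also treat the face $g_2$ at $y_5$ across $y_5u_1$, where $y_5$ has neighbours $z,y,u_1,u_2$. A triangle there gives $xy_3yu_2u_1zx$, or $y_3u_2ywx_3xy_3$ if $u_1=y_3$. With $f_1,g_1,g_2$ you get $N\le d(f_y^4)-3$, hence $\frac{d(f_y^4)-4}{d(f_y^4)-3}\ge\frac23$ for every $d(f_y^4)\ge6$. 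The case you expected to be hardest disappears.
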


\proof By Claims \ref{6+v}  and \ref{T_{x_2}}, $\tau(w\rightarrow T_v)\ge\frac{2}{3}$.
We claim  that $d(y)=6$ and $y_4$ is false.
Otherwise, define $y^*=y_4$ if $y_4$ is true, and $y^*=y_5$ if $y_4$ is false.
Then $S=\{x,y,w,x_3,y_3,y^*\}$ is  full, and we  set $C_6=xy_3y^*ywx_3x$.
Let $y,y_3,s_1,s_2$ be the neighbors of $y_4$ and $z,y,u_1,u_2$ be the neighbors of $y_5$ in $H$, respectively.
By (P1), $s_i$ and $u_i$ are true for $i=1,2$.
Suppose that $(f_y^3)_{y_3y_4}=[y_4y_3s_1]$.
Then  $S=\{x,y,w,x_3,y_3,s_1\}$ is full, and  we set $C_6=xy_3s_1ywx_3x$.
Hence  $d((f_y^3)_{y_3y_4})\ge4$.

If  $s_2=u_1$, i.e., $f_y^4=[yy_4s_2y_5]$.
It is easy to check that $S=U\cup\{y_3,s_2\}$ is full  when $s_2\ne w$,
hence we  set $C_6=xzs_2y_3ywx$.
Otherwise, $s_2=w$, then   $U\cup\{x_3,y_3\}$ is  full and we set $C_6=wx_3yy_3xzs_2$.
If $d(f_y^4)=5$, then $f_y^4=[yy_4s_2u_1y_5]$.
Obviously, $S=\{x,y,z,y_3,u_1,s_2\}$ is full and hence $C_6=xyy_3s_2u_1zx$.
Thus, $d(f_y^4)\ge6$.

Assume $g_1=(f_y^4)_{y_4s_2}$ is a $3$-face. If $s_2\ne w$, then $S=\{x,y,w,y_3,s_1,s_2\}$ is full and set $C_6=xy_3s_2s_1ywx$; if $s_2=w$, then $S=\{x,y,w,x_3,y_3,s_1\}$ is full and set $C_6=ws_1yy_3xx_3w$. Both give a $6$-cycle, so $d(g_1)\ge4$.
Assume $g_2=(f_y^4)_{y_5u_1}$ is a $3$-face. If $u_1\ne y_3$, then $S=\{x,y,z,y_3,u_1,u_2\}$ is full and set $C_6=xy_3yu_2u_1zx$; if $u_1=y_3$, then $S=\{x,y,w,x_3,y_3,u_2\}$ is full and set $C_6=y_3u_2ywx_3xy_3$. Hence $d(g_2)\ge4$.
Consequently $y_4\in X_1$ and $m_{4^+}(f_y^4)\ge3$.
By (R2),  $\tau(f_y^4\rightarrow T_{y_4})\ge \frac{d(f_y^4)-4}{d(f_y^4)-3}\ge\frac{2}{3}$.
\qed

\medskip

Let $h=(f_y^2)_{x_4y_3}$.
For each $u\in\{x_3,y_3\}$, if $u\in V^o$ or $u\in V^i$ with $m_{4^+}(u)\ge2$, then by Claims \ref{T_{x_2}}--\ref{f_y^4} and (R0)--(R1), we have $\sigma(T_{x_2})\ge-2+\frac{2}{3}\times2+\frac{1}{2}+\frac{1}{3}=\frac{1}{6}$ and $\sigma(T_{y_4})\ge-1+\frac{1}{2}\times2+\frac{2}{3}=\frac{2}{3}$.
After (R0)--(R2) are implemented,  if $T_{x_4}$ is unsaturated, then by (r2)  $\tau(T_{y_4}\rightarrow T_{x_4})\ge\frac{2}{3}$.
Hence,  by (r2), $\tau(T_{x_4}\rightarrow T_v)\ge-3+1+\frac{2}{3}\times2+\frac{1}{2}\times2=\frac{1}{3}$ when $d(h)\ge4$, and $\tau(T_{x_4}\rightarrow T_v)\ge\frac{1}{2}(-4+1+\frac{2}{3}\times2+1\times2)=\frac{1}{6}$ when $d(h)=3$.
 If $T_{x_4}$ is saturated, then $\tau(T_{x_2}\rightarrow T_v)\ge\frac{1}{6}$ by (r2). In both subcases, $w'(T_v)\ge-2+\frac{2}{3}\times2+\frac{1}{2}+\frac{1}{6}=0$.
 Hence $d(h)=d(h_{y_3x_3})=3$; write $h_{y_3x_3}=[y_3x_3u']$.
If $u'$ is true, then $S=\{x,y,w,x_3,y_3,u'\}$ is full and  we set $C_6=xwyy_3u'x_3x$.
If $u'$ is false, let $y_3,x_3,t_1,t_2$ be the neighbors of  $u'$ in $H$, and  let  $h_{y_3x_3},h_1,h',h_2$ be its incident faces.
Then one of $h_1$ and $h_2$ must be a $3$-face.
Let $i\in \{1,2\}$ and assume $d(h_i)=3$.
Then $S=\{x,y,w,x_3,y_3,t_i\}$ is full and we set $C_6=xyy_3t_ix_3wx$, see Fig.\,\ref{fig5}.

    \begin{figure}[H]
 	\centering
 \epsfig{file=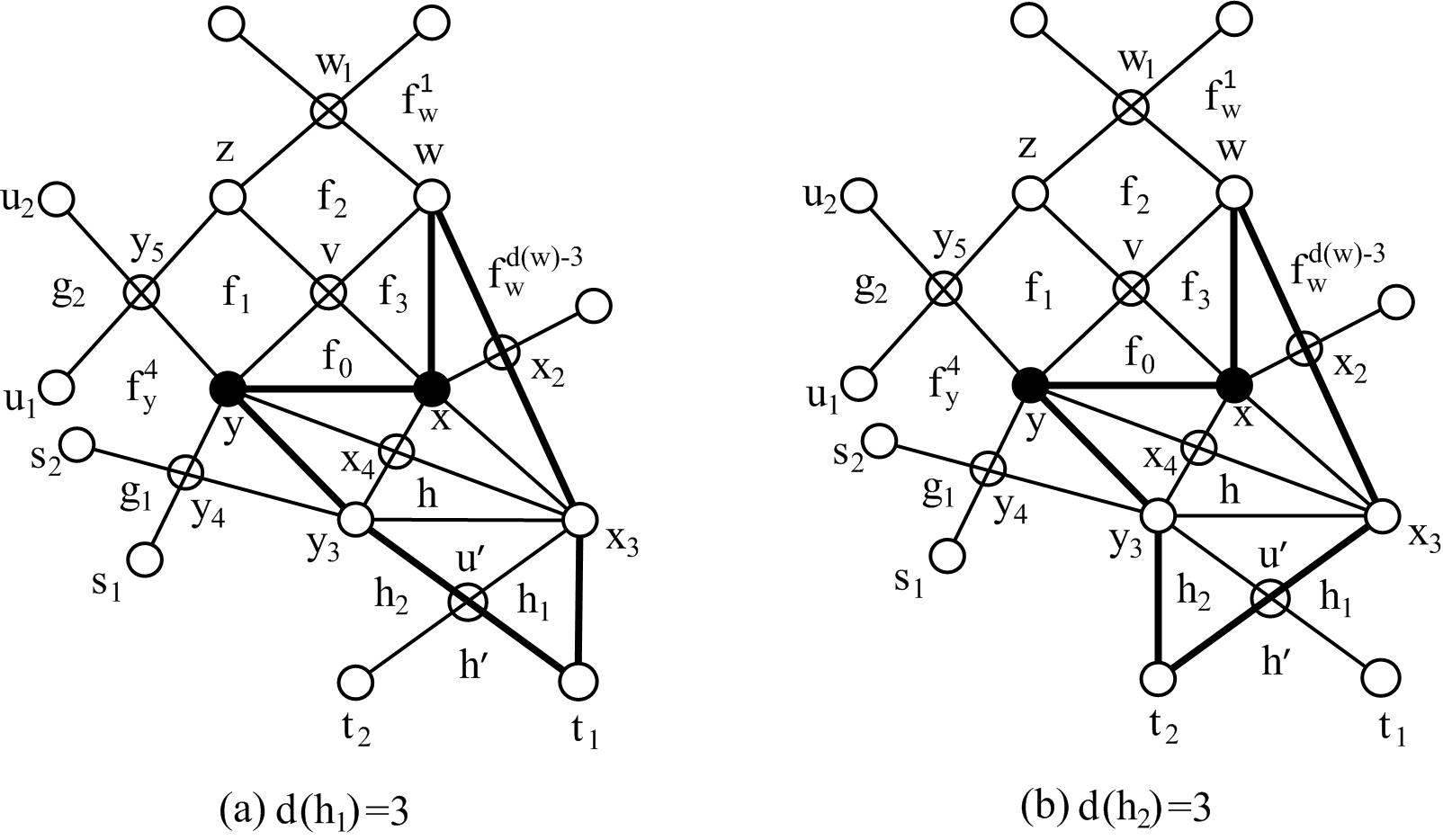, width=12.5cm}
 	\caption{ Two configurations in the proof of Case 2.2.2.}
 	\label{fig5}
 \end{figure}


\section{Proof of Theorem \ref{X_3}}
We assume, w.l.o.g., that $d(f_0)=d(f_1)=d(f_2)=3$ and $d(f_3)\ge4$.
Note that $w(T_v)=w(f_0)+w(f_1)+w(f_2)=(3-4)\times3=-3$.
In this case, $x=y_1$, $y=z_1$ and $z=w_1$.
By Claim \ref{6+v} and (R0), $\tau(u\rightarrow T_v)\ge \frac{2}{5}$ for $u\in\{x,w\}$ and $\tau(u\rightarrow T_v)\ge \frac{1}{3}\times2=\frac{2}{3}$ for $u\in\{y,z\}$.
If $w,x\in V^o$, then $\tau(u\rightarrow T_v)\ge \frac{1}{2}+\frac{7}{15}=\frac{29}{30}$ for  $u\in\{x,w\}$, and therefore $w'(T_v)\ge-3+\frac{29}{30}\times2+\frac{2}{3}\times2=\frac{4}{15}$ by (R0).
Hence, w.l.o.g., we may assume that $w\in V^i$.

Suppose that $x\in V^o$, then $\tau(x\rightarrow T_v)\ge \frac{1}{2}+\frac{7}{15}=\frac{29}{30}$ by (R0).
If $z\in V^o$, then $\tau(z\rightarrow T_v)\ge \frac{1}{2}+\frac{14}{15}=\frac{43}{30}$ by (R0) and Claim \ref{6+v}.
Therefore, $w'(T_v)\ge-3+\frac{29}{30}+\frac{2}{3}+\frac{43}{30}+\frac{2}{5}=\frac{7}{15}$.
Thus $z\in V^i$.
If $y\in V^o$, then $\tau(y\rightarrow T_v)\ge \frac{1}{2}+\frac{14}{15}=\frac{43}{30}$  by (R0).
Therefore, $w'(T_v)\ge-3+\frac{29}{30}+\frac{43}{30}+\frac{2}{3}+\frac{2}{5}=\frac{7}{15}$.
Hence $y\in V^i$.
By (R0), $\tau(x\rightarrow T_v)\ge \frac{14}{15}+\frac{7}{15}=\frac{21}{15}$.
Consequently, $w'(T_v)\ge-3+\frac{21}{15}+\frac{2}{3}\times2+\frac{2}{5}=\frac{2}{15}$.
Suppose that $x\in V^i$, then $\tau(x\rightarrow T_v)\ge \frac{2}{5}$ by Claim \ref{6+v}.
If $y,z\in V^o$, then $\tau(u\rightarrow T_v)\ge\frac{1}{2}+\frac{14}{15}=\frac{43}{30}$ for $u\in\{y,z\}$  by (R0).
Hence, $w'(T_v)\ge-3+\frac{2}{5}\times2+\frac{43}{30}\times2=\frac{2}{3}$.
If $y\in V^o$ and $z\in V^i$, then $\tau(y\rightarrow T_v)\ge\frac{14}{15}\times2=\frac{28}{15}$ by (R0) and $\tau(z\rightarrow T_v)\ge\frac{1}{3}\times2=\frac{2}{3}$ by Claim  \ref{6+v}. It follows that $w'(T_v)\ge-3+\frac{2}{5}\times2+\frac{28}{15}+\frac{2}{3}=\frac{1}{3}$.
By symmetry,  we may assume $x,y,z,w\in V^i$.

\medskip
\noindent{\bf Case 1.} $d(f_3)\ge5$.
\medskip

Suppose that $d(f_3)=5$ and write $f_3=[xvwsx_1]$.
If $s$ and $x_1$ are  true,  then $U\cup\{s,x_1\}$ is  full and we set $C_6=xyzwsx_1x$.
Otherwise, w.l.o.g., we assume that $s$ is true and $x_1$ is false by (P1).
Let $x,s,u_1,u_2$ be the neighbors of $x_1$ in $H$.
If $su_1\in E(G'')$,  then $U\cup\{s,u_1\}$ is  full and we set $C_6=xyzwsu_1x$.
If $xu_2\in E(G'')$,  then we set $C_6=xyzwsu_2x$ similarly.
Thus, $(f_3)_{sx_1}$ and $(f_3)_{xx_1}$ are $4^+$-faces.
By (R2), $\tau(f_3\rightarrow T_v)\ge \frac{5-4}{5-2}\times2=\frac{2}{3}$.
Suppose that $d(f_3)\ge6$, then  $\tau(f_3\rightarrow T_v)\ge \frac{d(f_3)-4}{d(f_3)}\times2=\frac{2}{3}$ by (R2).
Consequently, $\tau(f_3\rightarrow T_v)\ge \frac{2}{3}$.
If $d(u)\ge7$, or $d(u)=6$ with $m_{4^+}(u)\ge1$ for all $u\in\{y,z\}$, then $\tau(u\rightarrow T_v)\ge\frac{2}{5}\times2=\frac{4}{5}$ by Claim \ref{6+v}, and therefore $w'(T_v)\ge -3+\frac{4}{5}\times2+\frac{2}{5}\times2+\frac{2}{3}=\frac{1}{15}$.
Hence, by symmetry, we may assume $d(y)=6$ and $m_{4^+}(y)=0$.

Suppose  $y_2$ is true. Define $y^*=y_3$ if $y_3$   is true, otherwise $y^*=y_4$. Obviously, $U\cup\{y_2,y^*\}$ is full and  $C_6=xzwyy^*y_2x$ is a 6-cycle.
Thus $y_2$ is false, and $y_3$ is true  by (P1).
If $y_4$ is  true,  then  $C_6=xzwyy_4y_3x$ is a 6-cycle. Otherwise, $y_4$ is  false.
For each $u\in\{x,w\}$, if $\tau(u\rightarrow T_v)\ge\frac{1}{2}$, then $w'(T_v)\ge -3+\frac{1}{2}\times2+\frac{2}{3}\times3=0$.
Therefore, by Claim \ref{6+v}, at least one of $x$ and $w$ is a $6$-vertex with exactly one $4^+$-face.
If $d(x)=6$ and $m_{4^+}(x)=1$, then $U\cup\{y_3,x_3\}$ is full  and   we set  $C_6=xx_3ywzy_3x$.
If $d(w)=6$ and $m_{4^+}(w)=1$, then define $w^*=w_2$ if $w_2$   is true, otherwise $w^*=w_3$.
Thus $U\cup\{y_3,w^*\}$ is full and we  set $C_6=xzw^*wyy_3x$.

\medskip
\noindent{\bf Case 2.} $d(f_3)=4$ with $x_1$ is  true.
\medskip

It is easy to check  that $S=U\cup\{x_1\}$ is full.
If $f_w^1$ is a true $3$-face, then $S\cup\{w_2\}$ is full  and we may take $C_6=xyzw_2wx_1x$.
If $f_w^1$ is a false $3$-face, then $w_2$ is false and let $w,z,s_1,s_2$ be the neighbors of $w_2$ in $H$.
When $s_1z\in E(G'')$,  $S\cup\{s_1\}$ is  full and we set $C_6=xyzs_1wx_1x$.
Similarly, we set $C_6=xyzs_2wx_1x$ when $s_2w\in E(G'')$.
Hence for $u\in\{y,w\}$, by symmetry, $f_u^1$ is  either a $4^+$-face, or a false $3$-face with $u_2\in X_1\cup X_2^n$.

We claim that $f_z^1$ is also either a $4^+$-face, or a false $3$-face with $z_2\in X_1\cup X_2^n$.
If $f_z^1$ is a true $3$-face,
Claim \ref{3path} implies  that $z_2\ne x_1$ by taking $P=z_2yxx_1$ and hence $S\cup\{z_2\}$ is full. Then we set $C_6=xyz_2zwx_1x$.
If  $f_z^1$ is a false $3$-face, then  $z_2$ is a false vertex.
Let $z,y,u_1,u_2$ be the neighbors of $z_2$ in $H$.
If $u_2z\in E(G'')$, then Claim \ref{3path} applied to $P=u_2zwx_1$ yields $u_2\ne x_1$, so $S\cup\{u_2\}$ is full and we set $C_6=xyu_2zwx_1x$.
By symmetry, the same argument works if $u_1y\in E(G'')$.

Consequently, $m_{4^+}(u)\ge2$ for all $u\in U$.
By Claim \ref{6+v}, $\tau(u\rightarrow T_v)\ge\frac{1}{2}$ for $u\in\{x,w\}$, and $\tau(u\rightarrow T_v)\ge\frac{1}{2}\times2=1$ for $u\in\{y,z\}$.
Hence $w'(T_v)\ge-3+\frac{1}{2}\times2+1\times2=0$.

\medskip
\noindent{\bf Case 3.} $d(f_3)=4$ with $x_1$  is false.
\medskip

If at least one of $(f_3)_{xx_1}$ and $(f_3)_{wx_1}$ is a 3-face, then the discussion is similar to Case 2.
Otherwise, $(f_3)_{xx_1}$ and $(f_3)_{wx_1}$ are $4^+$-faces.
By Claim \ref{6+v}, $\tau(u\rightarrow T_v)\ge\frac{1}{2}$ for  each $u\in\{x,w\}$.
By symmetry, the proof is split into three subcases.

\medskip
\noindent{\bf Case 3.1.} $f_w^1$ is a true $3$-face.
\medskip

In this case, $f_w^1=[wzw_2]$ with $w_2$  true.
We claim that either $d(f_y^1)\ge4$,  or $d(f_y^1)=3$ with $y_2\in X_1$.
Otherwise, $d(f_y^1)=3$ and  $y_2\in V(G'')\cup X_2\cup X_3\cup X_4$.
If $y_2$ is true, then Claim \ref{3path} applied to $P=y_2yzw_2$ gives $y_2\ne w_2$; thus $U\cup\{w_2,y_2\}$ is full and we set $C_6=xzw_2wyy_2x$.
If $y_2$ is false, let $x,s_1,s_2,y$ be the neighbors of $y_2$ in $H$.
Assume $xs_1\in E(G'')$. Then Claim \ref{3path} applied to $P=s_1xyzw_2$ yields $s_1\ne w_2$, so $U\cup\{w_2,s_1\}$ is full and we set $C_6=xzw_2wys_1x$.
Assume $ys_2\in E(G'')$. Then similarly $U\cup\{w_2,s_2\}$ is full and we set $C_6=xzw_2wys_2x$.
Assume $s_1s_2\in E(G'')$. If $s_2\ne w$, then $U\cup\{s_1,s_2\}$ is full and we set $C_6=xzwys_1s_2x$; if $s_2=w$, then $U\cup\{w_2,s_1\}$ is full and we set $C_6=ww_2zxys_1w$.
Then we consider two possibilities as follows.

\medskip
\noindent{\bf Case 3.1.1.} $d(f_y^1)=3$ .
\medskip

Note that $y_2\in X_1$, hence $m_{4^+}(x)\ge3$ and $m_{4^+}(y)\ge1$.
By Claim  \ref{6+v}, $\tau(x\rightarrow T_v)\ge\frac{2}{3}$  and  $\tau(y\rightarrow T_v)\ge\frac{2}{5}\times2=\frac{4}{5}$.
We claim that $d(y)=6$ and $m_{4^+}(y)=1$. Otherwise, $\tau(y\rightarrow T_v)\ge\frac{1}{2}\times2=1$ and $\tau(y\rightarrow T_{y_2})\ge\frac{1}{2}$ by Claim \ref{6+v}.
After  (R0)--(R2)  are carried out, $\sigma(T_{y_2})\ge-1+\frac{1}{2}+\frac{2}{3}=\frac{1}{6}$ and hence $T_{y_2}$ is saturated.
By (r2), $\tau(T_{y_2}\rightarrow T_v)\ge\frac{1}{6}$.
Therefore, $w'(T_v)\ge-3+\frac{2}{3}\times2+1+\frac{1}{2}+\frac{1}{6}=0$.
If  $y_3$ and $y_4$ are  true,
then Claim \ref{3path} applied to $P=y_3yzw_2$ yields $y_3\ne w_2$. Hence $S=\{y,z,w,y_3,y_4,w_2\}$ is full and we set $C_6=yy_3y_4zw_2wy$.
By (P1), we consider the following two situations.

$\bullet$ Suppose that $y_3$ is  false  and $y_4$ is   true.
We claim that $d(f_y^2)\ge6$; otherwise  $4\le d(f_y^2)\le5$.
If $d(f_y^2)=4$, then  $f_y^2=[yy_2s_2y_3]$ with $s_2$  true.
If $s_2\ne w$, then $U\cup\{s_2,y_4\}$ is full and we set $C_6=xs_2y_4zwyx$; if $s_2=w$, then $U\cup\{y_4,w_2\}$ is full and we set $C_6=ww_2zxyy_4w$.
If $d(f_y^2)=5$, write $f_y^2=[yy_2s_2s_3y_3]$ with $s_2, s_3$ true.
Obviously, $S=\{x,y,z,s_2,s_3,y_4\}$ is full and we set $C_6=xs_2s_3y_4yzx$.
Hence $m_{4^+}(f_y^2)\ge1$,   and $\tau(f_y^2\rightarrow T_{y_2})\ge\frac{d(f_y^2)-4}{d(f_y^2)-1}\ge\frac{2}{5}$ by (R2).
After  (R0)--(R2) are   implemented, $\sigma(T_{y_2})\ge-1+\frac{2}{5}\times2+\frac{2}{3}=\frac{7}{15}$.
By (r1), $\tau(T_{y_2}\rightarrow T_v)\ge \frac{7}{15}$.
Consequently $w'(T_v)\ge-3+\frac{2}{3}\times2+\frac{4}{5}+\frac{1}{2}+\frac{7}{15}=\frac{1}{10}$.

$\bullet$ Suppose that $y_3$ is a true  and $y_4$ is false.
We  claim that $d(f_y^2)\ge5$; otherwise $d(f_y^2)=4$, and then $f_y^2=[yy_2s_2y_3]$.
Claim \ref{3path} applied to $P=s_2y_3yzw$ gives $s_2\ne w$, so $U\cup\{s_2,y_3\}$ is full and we set $C_6=xs_2y_3zwyx$.
Hence, $\tau(f_y^2\rightarrow T_{y_2})\ge\frac{d(f_y^2)-4}{d(f_y^2)-1}\ge\frac{1}{4}$ by (R2).
By (r2), $\tau(T_{y_2}\rightarrow T_v)\ge-1+\frac{2}{5}+\frac{2}{3}+\frac{1}{4}=\frac{19}{60}$.
Next, we claim that $d(z)=6$ and $m_{4^+}(z)=0$;
otherwise  $\tau(z\rightarrow T_v)\ge \frac{2}{5}\times2=\frac{4}{5}$ by Claim \ref{6+v}.
Hence, $w'(T_v)\ge-3+\frac{2}{3}+\frac{4}{5}\times2+\frac{1}{2}+\frac{19}{60}=\frac{1}{12}$.
Finally, $U\cup\{z_3,w_2\}$ is full and we set $C_6=xyz_3w_2wzx$.

\medskip
\noindent{\bf Case 3.1.2.} $d(f_y^1)\ge4$.
\medskip

Firstly, we claim that either $d(y)=6$ and $m_{4^+}(y)=1$, or $d(z)=6$ and $m_{4^+}(z)=0$.
Otherwise, by Claim \ref{6+v} we have $\tau(z\to f_w^1)\ge\frac25$, $\tau(z\to T_v)\ge\frac25\times2=\frac45$ and $\tau(y\to T_v)\ge\frac12\times2=1$.
Then, by (R0) and Claim \ref{6+v}, $\sigma(f_w^1)\ge-1+\frac25+\frac12+\frac13=\frac7{30}$, and by (r1) $\tau(f_w^1\to T_v)\ge\frac7{30}\times\frac13=\frac7{90}$.
Consequently, $w'(T_v)\ge-3+\frac23+1+\frac45+\frac12+\frac7{90}=\frac2{45}$, which completes the proof.
Thus we may assume that one of the two conditions holds.

$\bullet$ Suppose that $d(z)=6$ and $m_{4^+}(z)=0$.
Define $z^*=z_3$ if $z_3$ is true, otherwise $z^*=z_2$.
Then $U\cup\{w_2,z^*\}$ is full, and we obtain the $6$-cycle $C_6=xyww_2z^*zx$.

$\bullet$ Suppose that $d(y)=6$ and $m_{4^+}(y)=1$.
In this case, we have $d(z)\ge7$, or $d(z)=6$ with $m_{4^+}(z)\ge1$; otherwise we are back to the previous situation.
Hence, by the same reasoning as before we  have $\tau(z\to T_v)\ge\frac45$  and $\tau(f_w^1\to T_v)\ge\frac7{90}$.
Assume $y_4$ is true. Define $y^*=y_3$ if $y_3$ is true, otherwise $y^*=y_2$.
Applying Claim \ref{3path} to $P=w_2zyy^*$ yields $w_2\ne y^*$; thus $S=\{y,z,w,w_2,y_4,y^*\}$ is full and we set $C_6=yy^*y_4zw_2wy$.
Otherwise, $y_4$ is false and  then $y_3$ is true.
If $y_2$ is true, then $P=w_2zyy_2$ gives $w_2\ne y_2$, and we set $C_6=yy_2y_3zw_2wy$.
If $y_2$ is false, we claim that $d(f_y^1)\ge5$; otherwise $d(f_y^1)=4$ and $f_y^1=[xx_{d(x)-2}y_2y]$.
Taking $P=wzyxx_{d(x)-2}$, we obtain that $U\cup\{y_3,x_{d(x)-2}\}$ is full and we set $C_6=xywzy_3x_{d(x)-2}x$.
Hence $d(f_y^1)\ge5$, and by (R2) $\tau(f_y^1\to T_v)\ge\frac15$.
Consequently, $w'(T_v)\ge-3+\frac23+\frac45\times2+\frac12+\frac7{90}+\frac15=\frac2{45}$.


\medskip
\noindent{\bf Case 3.2.} $f_w^1$ is a false  $3$-face.
\medskip

By symmetry, $f_y^1$ is not a true 3-face.
Let $w,z,s_1,s_2$ be the neighbors of $w_2$ in $H$, and let $f_w^1,g_1,g_2,g_3$ be its incident faces.
We discuss the following subcases according to the degrees of $g_1$ and $g_3$.

\medskip
\noindent{\bf Case 3.2.1.} $d(g_1)=3$.
\medskip

If $d(f_y^1)=3$, i.e., $f_y^1=[xy_2y]$ with $y_2$ a false vertex.
Let $y,x,u_1,u_2$ be the neighbors of $y_2$ in $H$.
The analysis of $y_2$ is identical to that in Case 3.1, thus $y_2\in X_1$. Consequently $m_{4^+}(x)\ge3$ and $m_{4^+}(y)\ge1$.
By Claim \ref{6+v}, $\tau(x\to T_v)\ge\frac23$ and $\tau(y\to T_v)\ge\frac25\times2=\frac45$.

Suppose $d(f_z^1)\ge4$. Then by Claim \ref{6+v}, $\tau(y\to T_v)\ge\frac12\times2=1$ and $\tau(z\to T_v)\ge\frac25\times2=\frac45$.
We claim that $d(z)=6$ and $m_{4^+}(z)=1$; otherwise $\tau(z\to T_v)\ge\frac12\times2=1$ by Claim \ref{6+v}, and then $w'(T_v)\ge-3+\frac23+1\times2+\frac12=\frac16$.
If $z_2$ is true, then $U\cup\{s_1,z_2\}$ is full and we set $C_6=xyws_1z_2zx$.
If $z_2$ is false and $f_z^1=[yy_{d(y)-2}z_2z]$, then $U\cup\{s_1,y_{d(y)-2}\}$ is full and $C_6=xyy_{d(y)-2}s_1wzx$.
Hence $d(f_z^1)\ge5$, so by (R2) $\tau(f_z^1\to T_v)\ge\frac15$, and consequently $w'(T_v)\ge-3+\frac23+1+\frac45+\frac12+\frac15=\frac16$.
Thus we may assume $d(f_z^1)=3$, i.e., $f_z^1=[zyz_2]$.
Observing $z_2$, we have to handle two subcases.

{\bf (a)}   $z_2$ is true.

If $d(z)=6$ and $m_{4^+}(z)=0$, then $z_2s_1\in E(G'')$,  so $U\cup\{s_1,z_2\}$ is full and we set $C_6=xzws_1z_2yx$.
Hence,  by Claim \ref{6+v}, $\tau(z\rightarrow T_v)\ge\frac{2}{5}\times2=\frac{4}{5}$  and $\tau(z\rightarrow f_z^1)\ge\frac{2}{5}$.
We claim that $d(y)=6$ and $m_{4^+}(y)=1$;
otherwise $\tau(y\rightarrow T_v)\ge\frac{1}{2}\times2=1$  and $\tau(y\rightarrow f_z^1)\ge\frac{1}{2}$ by Claim \ref{6+v}.
After (R0)--(R2) are implemented, $\sigma(f_z^1)\ge-1+\frac{1}{2}+\frac{2}{5}+\frac{1}{3}=\frac{7}{30}$ by (R0) and Claim \ref{6+v}.
By (r1), $\tau(f_z^1\rightarrow T_v)\ge\frac{7}{30}\times\frac{1}{3}=\frac{7}{90}$.
Consequently, $w'(T_v)\ge-3+\frac{2}{3}+1+\frac{4}{5}+\frac{1}{2}+\frac{7}{90}=\frac{2}{45}$.

If $y_3$ is true, then applying Claim \ref{3path} to $P=y_3yzs_1$ gives $y_3\ne s_1$; thus $S=\{y,z,w,z_2,y_3,s_1\}$ is full and we set $C_6=yy_3z_2zs_1wy$.
If $y_3$ is false, first assume $d(f_y^1)=3$, so $f_y^1=[xy_2y]$ with $y_2\in X_1$.
As in Case 3.1.1, if $d(f_y^2)=4$ we obtain a $6$-cycle; hence we may assume $d(f_y^2)\ge5$, so by (R2) $\tau(f_y^2\to T_{y_2})\ge\frac{d(f_y^2)-4}{d(f_y^2)-1}\ge\frac14$, and then $\tau(T_{y_2}\to T_v)\ge\sigma(T_{y_2})\ge-1+\frac23+\frac25+\frac14=\frac{19}{60}$ by (r2).
Consequently $w'(T_v)\ge-3+\frac23+\frac45\times2+\frac12+\frac{19}{60}=\frac1{12}$.
If $d(f_y^1)\ge4$, then $y_2$ is true; applying Claim \ref{3path} to $P=y_2yzs_1$ yields $y_2\ne s_1$, so $S=\{y,z,w,s_1,z_2,y_2\}$ is full and  set $C_6=yy_2z_2zs_1wy$.

{\bf (b)}  $z_2$ is  false.

Let $z,y,t_1,t_2$ be the neighbors of $z_2$ in $H$.
Note that if $d(g_3)\ge4$ then $w'(T_v)\ge-3+\frac23\times3+\frac45=-\frac15$, and if $d(z)\ge7$ then $w'(T_v)\ge-3+\frac23+\frac45+\frac67+\frac12=-\frac{37}{210}>-\frac15$.
In either case, we claim that $d(y)=6$ with $m_{4^+}(y)=1$; otherwise $\tau(y\to T_v)\ge\frac12\times2=1$ by Claim \ref{6+v}, and then $w'(T_v)\ge-3+\frac23\times3+1=0$ when $d(g_3)\ge4$, and $w'(T_v)\ge-3+\frac23+1+\frac67+\frac12=\frac1{42}$ when $d(z)\ge7$.
Suppose  $d(f_y^1)=3$.
If $d(f_y^2)=4$, then Claim \ref{3path} applied to $P=u_2t_1yzw$ gives $u_2\ne w$, so $U\cup\{u_2,t_1\}$ is full and we set $C_6=xywzt_1u_2x$.
Thus $d(f_y^2)\ge5$, so $\tau(f_y^2\to T_{y_2})\ge\frac{d(f_y^2)-4}{d(f_y^2)-1}\ge\frac14$ by (R2).
After (R0)--(R2) are implemented, $\tau(T_{y_2}\to T_v)\ge\sigma(T_{y_2})\ge-1+\frac25+\frac23+\frac14=\frac{19}{60}>\frac15$ by (r2), hence $w'(T_v)\ge0$.
Suppose $d(f_y^1)\ge4$.
If $y_2$ is true, then similarly we set $C_6=yy_2t_1zs_1wy$.
If $y_2$ is false, we claim $d(f_y^1)\ge5$; otherwise $d(f_y^1)=4$ and $f_y^1=[xx_{d(x)-2}y_2y]$. As in Case 3.1.2, we set $C_6=xywzt_1x_{d(x)-2}x$ and $\tau(f_y^1\to T_v)\ge\frac15$ by (R2), which gives $w'(T_v)\ge0$.
Thus we must have $d(g_3)=3$ and $d(z)=6$.

If $d(f_z^2)=3$, then $t_2=s_1$ and we set $C_6=xys_1ws_2zx$.
If $d(f_z^2)=4$, then $f_z^2=[zz_2t_2s_1]$ and $U\cup\{s_1,t_2\}$ is full, so we set $C_6=xyt_2s_1wzx$.
Hence $d(f_z^2)\ge5$.
Suppose $t_1t_2\in E(G'')$. If $t_2\ne s_1$, set $C_6=yt_2t_1zs_1wy$; if $t_2=s_1$, set $C_6=s_1yxzs_2ws_1$. Thus $t_1t_2\notin E(G'')$.
By (R2), $\tau(f_z^2\to T_{z_2})\ge\frac{d(f_z^2)-4}{d(f_z^2)-1}\ge\frac14$.
If $(f_z^1)_{yz_2}$ is a $4^+$-face, then $\sigma(T_{z_2})\ge-1+\frac25+\frac12+\frac14=\frac3{20}$, so $\tau(T_{z_2}\to T_v)\ge\frac3{20}$ by (r2), and consequently
$w'(T_v)\ge-3+\frac23+1+\frac45+\frac12+\frac3{20}=\frac7{60}$.
Thus $(f_z^1)_{yz_2}$ is a $3$-face.

$\bullet$ Suppose $d(f_y^1)=3$. We claim $d(y)=6$ and $m_{4^+}(y)=1$; otherwise $\tau(y\to T_v)\ge1$ and $\tau(y\to T_{y_2})\ge\frac12$ by Claim \ref{6+v}. After (R0)--(R2) are carried  out, we get $\tau(T_{y_2}\to T_v)\ge\sigma(T_{y_2})\ge-1+\frac23+\frac12=\frac16$ by (r2), hence $w'(T_v)\ge -3+\frac23+1+\frac45+\frac12+\frac16=\frac2{15}$.
If $d(f_y^2)=4$, we obtain a $6$-cycle as before; hence $d(f_y^2)\ge5$ and $\tau(T_{y_2}\to T_v)\ge\frac{19}{60}$ by (r2), giving $w'(T_v)\ge-3+\frac23+\frac45\times2+\frac12+\frac{19}{60}=\frac1{12}$.

$\bullet$ Suppose $d(f_y^1)\ge4$. We discuss according to the degree of $y$.
If $d(y)\ge7$, we claim $m_{4^+}(y)=1$; otherwise $\tau(y\to T_v)\ge\frac35\times2=\frac65$ by Claim \ref{6+v}, and then $w'(T_v)\ge-3+\frac23+\frac65+\frac45+\frac12=\frac16$.
Define $y^*=y_{d(y)-4}$ if $y_{d(y)-4}$  is true, otherwise $y^*=y_{d(y)-5}$. Applying Claim \ref{3path} to $P=y^*yzs_1$ gives $y^*\ne s_1$; hence $S=\{y,z,w,t_1,s_1,y^*\}$ is full and we set $C_6=yy^*t_1zs_1wy$.
Thus $d(y)=6$. First suppose $d(f_y^2)=3$, i.e., $f_y^2=[t_1yy_2]$.
If $y_2$ is true, or $y_2$ is false and $d(f_y^1)=4$, we find a $6$-cycle as before.
Hence we may assume $y_2$ is false and $d(f_y^1)\ge5$.
Let $y,r_1,r_2,t_1$ be the neighbors of $y_2$ in $H$.
If $r_1r_2\in E(G'')$, then $S=\{x,y,z,t_1,r_1,r_2\}$ is full and we set $C_6=xyr_2r_1t_1zx$; thus $r_1r_2\notin E(G'')$.
By (R2), $\tau(f_y^1\to T_v)\ge\frac{d(f_y^1)-4}{d(f_y^1)-1}\ge\frac14$, and consequently $w'(T_v)\ge-3+\frac23+\frac45\times2+\frac12+\frac14=\frac1{60}$.
Otherwise $d(f_y^2)\ge4$.
By Claim \ref{6+v} and (R0), $\tau(y\to T)\ge1$ for $T\in\{T_v,T_{z_2}\}$, and $\tau(t_1\to T_{z_2})\ge\frac12$. After (R0)--(R2) are implemented, $\tau(T_{z_2}\to T_v)\ge\sigma(T_{z_2})\ge-2+1+\frac12+\frac25+\frac14=\frac3{20}$.
Hence $w'(T_v)\ge-3+\frac23+1+\frac45+\frac12+\frac3{20}=\frac7{60}$.


\medskip
\noindent{\bf Case 3.2.2.} $d(g_1)\ge4$ and $d(g_3)=3$.
\medskip

Assume $d(f_y^1)=3$, i.e., $f_y^1=[xy_2y]$ with $y_2$  a false vertex.
Let $y,x,u_1,u_2$ be the neighbors of $y_2$ in $H$.
By the preceding discussion, we may assume $u_1u_2\notin E(G'')$ and $d(f_y^2)\ge4$.
Hence, if $d(f_y^1)=3$, then $y_2\in X_1\cup X_2^a$.

First suppose $d(f_y^1)=3$ and $y_2\in X_2^a$.
If $u_1\ne s_2$, then $U\cup\{u_1,s_2\}$ is full and we set $C_6=xu_1yws_2zx$; hence $u_1=s_2$.
If $m_{4^+}(u)\ge2$ for all $u\in U$, then by Claim \ref{6+v} we have $w'(T_v)\ge-3+1\times2+\frac12\times2=0$.
Thus we consider the case $d(f_z^1)=3$, i.e., $f_z^1=[zyz_2]$.
If $z_2$ is true, then $U\cup\{s_2,z_2\}$ is full and we set $C_6=s_2xyz_2zws_2$.
If $z_2$ is false, then at least one of $(f_z^1)_{zz_2}$ and $(f_z^1)_{yz_2}$ is a $3$-face.
Let $z,y,t_1,t_2$ be the neighbors of $z_2$ in $H$.
If $yt_1\in E(G'')$, then $U\cup\{s_2,t_1\}$ is full and we set $C_6=s_2xyt_1zws_2$.
If $zt_2\in E(G'')$, then $U\cup\{s_2,t_2\}$ is full and we set $C_6=s_2xyt_2zws_2$.
Thus either $d(f_y^1)\ge4$, or $d(f_y^1)=3$ with $y_2\in X_1$.
Consequently $m_{4^+}(x)\ge3$ and $m_{4^+}(y)\ge1$.
By Claim \ref{6+v},
$\tau(x\rightarrow T_v)\ge\frac{2}{3}$ and $\tau(u\rightarrow T_v)\ge\frac{2}{5}\times2=\frac{4}{5}$ for $u\in\{y,z\}$.

Suppose $d(f_y^1)=3$ with $y_2\in X_1$.
If $d(f_y^2)\ge5$, then by (R2) $\tau(f_y^2\to T_{y_2})\ge\frac{d(f_y^2)-4}{d(f_y^2)-1}\ge\frac14$, so by (r1) we have
$\tau(T_{y_2}\to T_v)\ge\sigma(T_{y_2})\ge-1+\frac23+\frac25+\frac14=\frac{19}{60}$.
Consequently $w'(T_v)\ge-3+\frac23+\frac45\times2+\frac12+\frac{19}{60}=\frac1{12}$.
Hence $d(f_y^2)=4$, i.e., $f_y^2=[yy_2u_2y_3]$.
If $y_3$ is true, then Claim \ref{3path} applied to $P=u_2y_3yzw$ gives $u_2\ne w$, so $U\cup\{u_2,y_3\}$ is full and we set $C_6=xu_2y_3ywzx$.
Assume $y_3$ is false. We claim that $d(y)=6$ and $m_{4^+}(y)=1$; otherwise $\tau(y\to T_v)\ge\frac12\times2=1$ and $\tau(y\to T_{y_2})\ge\frac12$ by Claim \ref{6+v}.
After (R0)--(R2) are implemented, $\tau(T_{y_2}\to T_v)\ge\sigma(T_{y_2})\ge-1+\frac23+\frac12=\frac16$, hence $w'(T_v)\ge-3+\frac23+1+\frac45+\frac12+\frac16=\frac2{15}$.
Now $y_4$ is true. If $u_2\ne w$, then $U\cup\{u_2,y_4\}$ is full and we set $C_6=xywzy_4u_2x$; if $u_2=w$, then $U\cup\{s_2,y_4\}$ is full and $C_6=wxyy_4zs_2w$.

Finally  consider the case $d(f_y^1)\ge4$.
If $d(u)\ge7$, or $d(u)=6$ with $m_{4^+}(u)\ge2$ for  all $u\in\{y,z\}$, then $\tau(u\rightarrow T_v)\ge\frac{1}{2}\times2=1$ by Claim  \ref{6+v}.
Therefore, $w'(T_v)\ge-3+\frac{2}{3}+1\times2+\frac{1}{2}=\frac{1}{6}$.
Thus we discuss the following two possibilities.

$\bullet$ Suppose $d(y)=6$ and $m_{4^+}(y)=1$.
If $y_4$ is true, define $y^*=y_3$ if $y_3$ true, otherwise $y^*=y_2$.
Applying Claim \ref{3path} to $P=y^*yzws_2$ yields $y^*\ne s_2$, hence $S=\{y,z,w,y_4,s_2,y^*\}$ is full and we set $C_6=yy^*y_4zs_2wy$.
If $y_4$ is false, then $y_3$ is true.
If $y_2$ is true, then Claim \ref{3path} gives $C_6=yy_2y_3zs_2wy$, hence $y_2$ is false.
As in Case 3.2.1, we obtain $d(f_y^1)\ge5$ and $m_{4^+}(f_y^1)\ge1$, so $\tau(f_y^1\to T_v)\ge\frac14$  by (R2).
Thus $w'(T_v)\ge-3+\frac23+\frac45\times2+\frac12+\frac14=\frac1{60}$.

$\bullet$ Suppose $d(z)=6$ and $m_{4^+}(z)=1$.
Then by Claim \ref{6+v}, $\tau(y\to T_v)\ge\frac12\times2=1$.
If $z_2$ is true, then after (R0)--(R2) we have $\sigma(f_z^1)\ge-1+\frac12+\frac25+\frac13=\frac7{30}$, and $\tau(f_z^1\to T_v)\ge\frac7{30}\times\frac13=\frac7{90}$ by (r1); hence $w'(T_v)\ge-3+\frac23+1+\frac45+\frac12+\frac7{90}=\frac2{45}$.
Thus $z_2$ is false, so $z_3$ is true.
If $d(g_2)=3$, then $S=\{y,z,w,z_3,s_1,s_2\}$ is full and we set $C_6=yz_3zs_2s_1wy$.
Otherwise $d(g_2)\ge4$.
We claim that $d(g_1)\ge5$ and $\tau(g_1\to T_{w_2})\ge\frac13$; if $d(g_1)\ge6$ this is immediate by (R2).
So assume $4\le d(g_1)\le5$.
If $d(g_1)=4$, write $g_1=[zz_3s_1w_2]$.
Applying Claim \ref{3path} to $P=s_1z_3zyx$ yields $s_1\ne x$, so $U\cup\{s_1,z_3\}$ is full and we set $C_6=xyz_3s_1wzx$.
If $d(g_1)=5$, let $g_1=[zz_3qs_1w_2]$.
If $q$ is true, then $S=\{y,z,w,z_3,s_1,q\}$ is full and we set $C_6=zyz_3qs_1wz$.
If $q$ is false, let $s_1,z_3,q_1,q_2$ be the neighbors of $q$ in $H$.
If $(g_1)_{qz_3}$ is a $3$-face, then Claim \ref{3path} gives $C_6=zyz_3q_1s_1wz$; hence $(g_1)_{qz_3}$ is a $4^+$-face and $m_{4^+}(g_1)\ge2$.
By (R2), $\tau(g_1\to T_{w_2})\ge\frac{5-4}{5-2}=\frac13$.
After (R0)--(R2) are carried  out, $\sigma(T_{w_2})\ge-2+1+\frac25\times2+\frac13=\frac2{15}$, and  $\tau(T_{w_2}\to T_v)\ge\frac2{15}\cdot\frac12=\frac1{15}$ by (r2).
Consequently $w'(T_v)\ge-3+\frac23+1+\frac45+\frac12+\frac1{15}=\frac1{30}$.

\medskip
\noindent{\bf Case 3.2.3.} $d(g_1), d(g_3)\ge4$.
\medskip

In this case, either $d(f_y^1)\ge4$, or $d(f_y^1)=3$ with $y_2\in X_1\cup X_2^n$.
By Claim \ref{6+v}, $\tau(u\rightarrow T_v)\ge\frac{2}{3}$ for $u\in\{x,w\}$, and $\tau(u\rightarrow T_v)\ge\frac{2}{5}\times2=\frac{4}{5}$ for $u\in\{y,z\}$.
If $d(g_2)\ge4$, then $w_2\in X_1$ and after (R0)--(R2) we have $\sigma(T_{w_2})\ge-1+\frac{2}{3}+\frac{2}{5}=\frac{1}{15}$.
By (r2), $\tau(T_{w_2}\rightarrow T_v)\ge\frac{1}{15}$, hence $w'(T_v)\ge-3+\frac{2}{3}\times2+\frac{4}{5}\times2+\frac{1}{15}=0$.
Thus we may assume $d(g_2)=3$.
We claim that $d(z)=6$ and $m_{4^+}(z)=1$;
otherwise $\tau(z\rightarrow T_v)\ge\frac{1}{2}\times2=1$ by Claim \ref{6+v},
and then  $w'(T_v)\ge-3+\frac{2}{3}\times2+1+\frac{4}{5}=\frac{2}{15}$.
Define $z^*=z_2$ if $z_2$ is true, otherwise $z^*=z_3$.
Then $S=\{y,z,w,s_1,s_2,z^*\}$ is full and we set $C_6=yz^*zs_2s_1wy$.

\medskip
\noindent{\bf Case 3.3.} $f_w^1$ and $f_y^1$ are $4^+$-faces.
\medskip

By Claim \ref{6+v}, $\tau(u\rightarrow T_v)\ge\frac{2}{3}$ for $u\in\{x,w\}$, and $\tau(u\rightarrow T_v)\ge\frac{2}{5}\times2=\frac{4}{5}$ for $u\in\{y,z\}$.
As in Case 3.2.3, we obtain $d(y)=d(z)=6$ and $m_{4^+}(y)=m_{4^+}(z)=1$.
Suppose first that $z_2=y_4$ is true.
Define $y^*=y_3$ if $y_3$ is true, otherwise $y^*=y_2$; define $z^*=z_3$ if $z_3$ is true, otherwise $z^*=z_4$.
Then $S=\{x,y,z,z_2,z^*,y^*\}$ is full, and we set $C_6=xyy^*z_2z^*zx$.
Now suppose $z_2=y_4$ is false. Then $y_3$ and $z_3$ are true.
If $y_2$ and $z_4$ are true, then Claim \ref{3path} applied to $P=y_2yzz_4$ gives $y_2\ne z_4$; hence $S=\{y,z,y_2,y_3,z_3,z_4\}$ is full and we set $C_6=yy_2y_3zz_4z_3y$.
Thus, without loss of generality, we may assume $z_4$ is false.
If $d(f_w^1)=4$, i.e., $f_w^1=[wzz_4w_2]$, then $w_2$ is true by (P1). Applying Claim \ref{3path} to $P=w_2wzyy_3$ yields $w_2\ne y_3$; hence $S=\{y,z,w,y_3,z_3,w_2\}$ is full and we set $C_6=yy_3zz_3w_2wy$.
If $d(f_w^1)\ge5$, then $\tau(f_w^1\to T_v)\ge\frac15$ by (R2), and consequently
$w'(T_v)\ge-3+\frac23\times2+\frac45\times2+\frac15=\frac2{15}$.

\section{Proof of Theorem \ref{X_4}}

Note that $d(f_i)=3$ for all $i\in[0,3]$, hence $w(T_v)=w(f_0)+w(f_1)+w(f_2)+w(f_3)=(3-4)\times4=-4$.
In this case, $x=y_1$, $y=z_1$, $z=w_1$ and $w=x_1$.
By Claim \ref{6+v},  $\tau(u\rightarrow T_v)\ge \frac{1}{3}\times2=\frac{2}{3}$ for each $u\in U$.
If $U\subseteq V^o$, then $G''=K_4$, which contradicts the definition of $G''$.
Thus, without loss of generality, we may assume $x\in V^i$.
If $y,w\in V^o$, then $\tau(u\rightarrow T_v)\ge\frac{1}{2}+\frac{14}{15}=\frac{43}{30}$ by (R0) for $u\in\{y,w\}$, and it follows that $w'(T_v)\ge-4+\frac{2}{3}\times2+\frac{43}{30}\times2=\frac{1}{5}$.
Hence $y\in V^i$ by symmetry.
Similarly, if $z,w\in V^o$, then the same calculation gives $w'(T_v)\ge\frac15$; so $w\in V^i$ by symmetry.

Now suppose $z\in V^o$.
By (R0), $\tau(z\to T_v)\ge 2\times\frac{14}{15}=\frac{28}{15}$.
We claim that $d(u)=6$ and $m_{4^+}(u)=0$ for all $u\in\{x,y,w\}$; otherwise, by Claim \ref{6+v}, we  have $w'(T_v)\ge -4+\frac{28}{15}+\frac45+\frac23\times2=0$.
Assume $x_2$ is true. Define $x^*=x_3$ if $x_3$ is true, otherwise $x^*=x_4$.
Then $U\cup\{x_2,x^*\}$ is full, and we set $C_6=xyzwx_2x^*x$.
Thus $x_2$ is false; by symmetry, $x_4$ is also false.
By (P1), $x_3$ and $w_3$ are true, and $U\cup\{x_3,w_3\}$ is full, so we set $C_6=xx_3yzww_3x$.

It remains to consider the case $U\subseteq V^i$.
If $m_{4^+}(u)\ge2$ for all $u\in U$, then $\tau(u\to T_v)\ge \frac12\times2=1$ by Claim \ref{6+v}, so $w'(T_v)\ge -4+1\times4=0$.
By symmetry, we may assume that $d(f_w^1)=3$, i.e., $f_w^1=[wzw_2]$.
We then consider the following two subcases.

\medskip
\noindent{\bf Case 1.} $w_2$ is  true.
\medskip

First, we give a key claim.

\begin{claim}\label{X_4_1}
If $d(f_u^1)=3$ for $u\in\{x,y,z\}$, then $u_2\in X_1$.
\end{claim}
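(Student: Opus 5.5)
We argue by contradiction, in the style of the earlier case analyses. The setting is $v\in X_4$, so $x=y_1$, $y=z_1$, $z=w_1$, $w=x_1$, with $U\subseteq V^i$ and $f_w^1=[wzw_2]$ where $w_2$ is true. Suppose $d(f_u^1)=3$ for some $u\in\{x,y,z\}$ but $u_2\notin X_1$. We will exhibit a $6$-cycle in $G''$, contradicting (Q1).

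The basic building block is the path $P_0=x\,y\,z\,w_2\,w$, which together with the false vertex $v$ forms the $4$-cycle $xyzw$ of $G''$ via the two crossing edges $xz$ and $yw$. Concretely, the true edges $xy$, $yz$, $zw$, $wx$ and the two crossing edges $xz$, $yw$ give $K_4$ on $U$, and $zw_2$, $w_2w$ are true edges. So $U\cup\{w_2\}$ induces a $K_4$ plus a vertex adjacent to $z$ and $w$, and it contains $5$-cycles such as $xyzw_2wx$ through every vertex of $U$. Any vertex $s\notin U\cup\{w_2\}$ adjacent in $G''$ to two consecutive vertices of such a $5$-cycle lets us extend it to a $6$-cycle.

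\textbf{Case $u_2$ true.} Then $f_u^1$ is a true $3$-face $[u\,u^-\,u_2]$, where $u^-$ is the predecessor of $u$ in the cyclic order $x,y,z,w$; that is, $f_x^1=[xwx_2]$, $f_y^1=[yxy_2]$, $f_z^1=[zyz_2]$. We insert $u_2$ into a $5$-cycle through the edge $uu^-$. For $u=x$ use $C_6=x\,x_2\,w\,w_2\,z\,y\,x$. For $u=y$ use $C_6=y\,y_2\,x\,w\,w_2\,z\,y$. For $u=z$ use $C_6=z\,z_2\,y\,x\,w\,w_2\,z$. In each case we must check that the set is full. The vertices of $U$ are distinct, and $u_2\notin U$ because $f_u^1$ is a face distinct from the four faces around $v$. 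The only possible coincidence is $u_2=w_2$, and we exclude it with Claim \ref{3path}. For instance, for $u=y$ we take the trail $P=y_2\,y\,z\,w_2$ (or $y_2xww_2$, whichever is non-crossing), whose interior vertices have neighbours on both sides, e.g.\ $v$-side and $y_{d(y)-1}$-side. Since $d\ge6$ at internal vertices, the hypotheses of Claim \ref{3path} hold, so $y_2\ne w_2$.

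\textbf{Case $u_2$ false.} Let its four neighbours be $u,u^-,s_1,s_2$, in order. Since $u_2\notin X_1$, one of the faces $(f_u^1)_{u^-u_2}$, $(f_u^1)_{uu_2}$ or the opposite face at $u_2$ is a $3$-face. In each subcase we obtain a true edge among $u^-s_1$, $s_1s_2$, $s_2u$, and we reroute. If $u^-s_1\in E(G'')$, replace $u$ by $s_1$ between $u^-$ and the rest, keeping $u$ elsewhere. Concretely, we use the crossing edge $uu^-$ as a genuine edge of $G''$: $u_2$ is the crossing of $uu'$ with $u^-u''$ for true vertices $u',u''\in\{s_1,s_2\}$. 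This gives $6$-cycles such as $x\,s_i\,w\,w_2\,z\,y\,x$, exactly as in Case 2 of Theorem \ref{X_3}. If instead $s_1s_2\in E(G'')$, we get a $6$-cycle of the form $u^-\,s_1\,s_2\,u\,\cdots$ closed through two further vertices of $U\cup\{w_2\}$. As before, we must verify fullness. The potential coincidences are $s_i=w_2$ or $s_i\in U$. Coincidence with $U$ is excluded by planarity of $H$ and (P1). Coincidence with $w_2$ is handled either by Claim \ref{3path} or by switching to an alternative cycle that avoids $w_2$, e.g.\ $w_2\,x\,y\,z\,s_j\,w\,w_2$, as was done repeatedly when some $s=w$.

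I expect the main obstacle to be the false subcase for $u=x$ and $u=z$, whose faces $f_x^1$ and $f_z^1$ touch $w$ or $z$, the vertices carrying $w_2$. There a neighbour $s_i$ of $u_2$ may equal $w_2$, and the cycle has to be chosen to use $w_2$ once instead of twice. For these coincidences I would first try the symmetric alternative cycle through $v$'s $K_4$ that omits the edge $zw_2$, falling back on Claim \ref{3path} with a trail of length $4$.
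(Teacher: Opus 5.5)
Your strategy is the paper's: extend the $5$-cycle $xyzw_2wx$ by a vertex adjacent to two consecutive vertices, or replace an edge of the $4$-cycle $xyzw$ by a length-$3$ detour through $u_2$, and use Claim~\ref{3path} to exclude coincidences with $w_2$. Your true-$u_2$ case is essentially correct.

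The false-$u_2$ case, however, is only sketched, and the sketch contains concrete errors.
\begin{itemize}
\item Take the neighbours of $u_2$ in the order $u,u^-,s_1,s_2$. The crossing edges through $u_2$ are then $us_1$ and $u^-s_2$; note that $uu^-$ is an uncrossed edge, not a crossing one. So when $s_1s_2\in E(G'')$ the detour must be $u\,s_1\,s_2\,u^-$, not $u^-s_1s_2u$, whose end edges need not exist. The correct cycles are $xs_1s_2wzyx$ for $u=x$, and $xt_2t_1yzwx$ for $u=y$ (paper's labels $y,x,t_1,t_2$ around $y_2$). Both avoid $w_2$.
\item The fullness check is not actually carried out. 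The fact $s_i\notin U$ follows from simplicity of $G''$, not from (P1): a crossing edge $us_1$ or $u^-s_2$ cannot duplicate an edge of the $K_4$ on $U$.
\item Your fallback cycle $w_2xyzs_jww_2$ is not a valid construction. It needs edges $w_2x$, $zs_j$, $s_jw$ that you do not have.
\end{itemize}

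You have also put the difficulty in the wrong place. For $u=x$ (and symmetrically $u=z$), no coincidence with $w_2$ can arise:
\begin{itemize}
\item If $[x_2ws_1]$ is a face, then $s_1=w_{d(w)-3}\ne w_2$, because $d(w)\ge 6$.
\item $s_2\ne w_2$, since otherwise $ww_2$ would occur in $G''$ both as a crossed edge and as an uncrossed edge.
\end{itemize}
So $xs_1ww_2zyx$ and $xs_2ww_2zyx$ are genuine $6$-cycles without any appeal to Claim~\ref{3path}.

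Claim~\ref{3path} is genuinely needed for $u=y$ with $y_2$ false:
\begin{itemize}
\item If $xt_1\in E(G'')$, take $C_6=xt_1yzw_2wx$ and exclude $t_1=w_2$ using the trail $P=t_1xww_2$. Otherwise $xww_2$ would be a separating true triangle.
\item If $yt_2\in E(G'')$, take $C_6=xt_2yzw_2wx$ and exclude $t_2=w_2$ using $P=t_2yzw_2$.
\end{itemize}
With these repairs, your argument coincides with the paper's proof.
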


\proof Suppose $f_x^1=[wx_2x]$.
If $x_2$ is true, then $U\cup\{x_2,w_2\}$ is full  and we set $C_6=xyzw_2wx_2x$.
Otherwise, $x_2$ is false. Let $x,w,s_1,s_2$ be the neighbors of $x_2$ in $H$.
If $ws_1\in E(G'')$, then $C_6=xyzw_2ws_1x$;
if $xs_2\in E(G'')$, then $C_6=xyzw_2ws_2x$;
and if $s_1s_2\in E(G'')$, then $C_6=xyzws_2s_1x$.
By symmetry, $x_2, z_2\in X_1$.

Now suppose $f_y^1=[xy_2y]$.
If $y_2$ is true, then Claim \ref{3path} applied to $P=y_2xww_2$ gives $y_2\ne w_2$, so $U\cup\{w_2,y_2\}$ is full and we set $C_6=xy_2yzw_2wx$.
If $y_2$ is false, let $y,x,t_1,t_2$ be the neighbors of $y_2$ in $H$.
If $xt_1\in E(G'')$, then Claim \ref{3path} applied to $P=t_1xww_2$ yields $t_1\ne w_2$, so $U\cup\{w_2,t_1\}$ is full and we set $C_6=xt_1yzw_2wx$.
Similarly, if $yt_2\in E(G'')$ or $t_1t_2\in E(G'')$, we obtain a $6$-cycle.
Thus $y_2\in X_1$.\qed

\medskip

By Claim \ref{X_4_1}, we have $m_{4^+}(u)\ge2$ for $u\in\{x,y\}$ and $m_{4^+}(u)\ge1$ for $u\in\{z,w\}$.
By Claim \ref{6+v}, $\tau(u\to T_v)\ge\frac12\times2=1$ for $u\in\{x,y\}$, and $\tau(u\to T_v)\ge\frac25\times2=\frac45$ for $u\in\{z,w\}$.
If $d(u)\ge7$, or $d(u)=6$ with $m_{4^+}(u)\ge2$ for all $u\in\{z,w\}$, then $\tau(u\to T_v)\ge\frac12\times2=1$ by Claim \ref{6+v}, hence $w'(T_v)\ge-4+1\times4=0$.
Thus, without loss of generality, we may assume $d(w)=6$ and $m_{4^+}(w)=1$.
Suppose $d(f_x^1)\ge4$. Define $w^*=w_3$ if $w_3$ true, otherwise $w^*=w_4$. Then $U\cup\{w_2,w^*\}$ is full, and we set $C_6=xyzw_2w^*wx$.

Thus we may assume $d(f_x^1)=3$, so by Claim \ref{X_4_1} we have $x_2\in X_1$.
If $w_3$ is true, then $C_6=xyzw_2w_3wx$.
If $w_3$ is false, let $w,w_2,u_1,u_2$ be the neighbors of $w_3$ in $H$.
If $d(f_w^3)=4$, i.e., $u_2=s_1$, then $U\cup\{w_2,u_2\}$ is full and we set $C_6=xyzww_2u_2x$.
If $d(f_w^3)=5$, i.e., $u_2s_1\in E(G'')$, then $S=\{x,z,w,w_2,u_2,s_1\}$ is full and we set $C_6=xwzw_2u_2s_1x$.
Thus $d(f_w^3)\ge6$ and $m_{4^+}(f_w^3)\ge1$, so $\tau(f_w^3\to T_{x_2})\ge\frac{d(f_w^3)-4}{d(f_w^3)-1}\ge\frac25$ by (R2).
After (R0)--(R2) are implemented, $\sigma(T_{x_2})\ge-1+\frac25\times2+\frac12=\frac3{10}$, and $\tau(T_{x_2}\to T_v)\ge\frac3{10}$ by (r2).
We claim that $d(z)=6$ and $m_{4^+}(z)=1$; otherwise $\tau(z\to T_v)\ge\frac12\times2=1$ by Claim \ref{6+v}, and then $w'(T_v)\ge-4+1\times3+\frac45+\frac3{10}=\frac1{10}$.
By the same argument as above, we derive that $d(f_z^1)=3$ with $z_2\in X_1$ and $d(f_z^2)\ge6$ by symmetry.
Similarly, after (R0)--(R2) we have $\tau(T_{z_2}\to T_v)\ge\frac3{10}$.
Consequently, $w'(T_v)\ge-4+1\times2+\frac45\times2+\frac3{10}\times2=\frac15$.

\medskip
\noindent{\bf Case 2.} $w_2$ is false.
\medskip

By symmetry, we may assume that for each $u\in U\setminus\{w\}$, either $d(f_u^1)\ge4$, or $d(f_u^1)=3$ with $u_2$ a false vertex.
Analogous to the proof of Claim \ref{X_4_1}, we have the following claim.

\begin{claim}\label{X_4_2}
 Assume that $d(f_w^2)=d(f_u^1)=3$.

 {\em (1)} If $u\in\{y,z\}$, then $u_2\in X_1$.

 {\em (2)} If $u=x$ and $d(w)\ge7$, then $x_2\in X_1$.

 {\em (3)} If $u=x$ and $d(w)=6$, then $x_2\in X_1\cup  X_2^a$; moreover, in the case $x_2\in X_2^a$, we have $d(f_w^3)=3$.
\end{claim}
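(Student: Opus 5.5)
The proof follows the template of Claim \ref{X_4_1}. Whenever a forbidden configuration at $u_2$ occurs, we exhibit a $6$-cycle in $G''$ through $x,y,z,w$ and two further vertices. Fullness of the vertex set is checked directly, and when two of the new vertices could coincide we use Claim \ref{3path}.

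Throughout we are in Case 2 with $x,y,z,w\in V^i$. Here $f_w^1=[wzw_2]$ with $w_2$ false, and $d(f_w^2)=3$. Let $w,z,s_1,s_2$ be the neighbours of $w_2$, so that $f_w^2=[ww_2s_2]$ and $s_2=w_3$ is true by (P1). Thus $w$ is joined to the true vertex $s_2$, which lies on the same side as $w_2$. This true neighbour $s_2$ of $w$ plays the role that $w_2$ played in Claim \ref{X_4_1}.

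For (1), take $u=y$ with $f_y^1=[xy_2y]$. Since $y_2$ is false, let $y,x,t_1,t_2$ be its neighbours. If $xt_1$, $yt_2$ or $t_1t_2$ lies in $E(G'')$, we get respectively
\[
C_6=xt_1yzs_2wx,\quad C_6=xt_2yzs_2wx,\quad C_6=xwzyt_2t_1x .
\]
Here $t_i\ne s_2$ follows from Claim \ref{3path} applied to a trail such as $P=t_1xws_2$, with $y$ on one side and $w_2$ or $z$ on the other. Otherwise $y_2\in X_1$. The case $u=z$ is symmetric: $f_z^1=[yz_2z]$, and we route through $y$ and $w$ using the edge $ws_2$ in the same way.

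For $u=x$ we have $f_x^1=[wx_2x]$ with $x_2$ false; let $x,w,r_1,r_2$ be its neighbours. If $xr_2\in E(G'')$, then $C_6=xyzs_2wr_2x$ works, and if $r_1r_2\in E(G'')$, then $C_6=xyzwr_1r_2x$ works. The remaining configuration is $wr_1\in E(G'')$, which means $r_1$ is a neighbour of $w$ lying between $x_2$ and $s_2$ in the rotation at $w$.

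When $d(w)\ge7$, the path $wr_1$ together with $x,y,z$ and some other true neighbour $w^*$ of $w$ lying strictly between $r_1$ and $s_2$ gives a $6$-cycle $C_6=xyzs_2w r_1x$ or one of its variants. The extra neighbours of $w$ guarantee that $r_1\ne s_2$, and that the needed distinct vertex exists by (P1). This proves (2).

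When $d(w)=6$, the neighbours of $w$ are exactly $v,z,w_2,s_2,w_4,x_2$ (with $x=w_5$). If $r_1\ne s_2$, then $r_1=w_4$ and $C_6=xyzs_2w_4\ldots$ is not available. So the forbidden cases force $r_1=s_2$ or $r_1=w_4$. In either case the only surviving configuration is that $x_2$ has exactly two adjacent $3$-faces, namely $f_x^1$ and the face on $wr_1$, so $x_2\in X_2^a$. Any additional $3$-face on $r_1r_2$ or $xr_2$ yields a $6$-cycle as above. If moreover $d(f_w^3)\ge4$, then $w_4$ is separated from $s_2$, and the cycle $xyzw_2s_2w\ldots$ can be closed through the $3$-face at $x_2$, giving $C_6=xyzs_2wr_1x$ with $r_1$ distinct, again by Claim \ref{3path}. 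This forces $d(f_w^3)=3$ and proves (3).

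The main obstacle is (3). It requires carefully tracking the coincidences $r_1\in\{s_2,w_4\}$ permitted by $d(w)=6$, and verifying fullness in each of them. I would handle it by enumerating the positions of $r_1$ in the rotation at $w$, and applying Claim \ref{3path} to the trail $P=r_1wzy$ (or $P=r_1wxy$) to exclude the unwanted identifications.
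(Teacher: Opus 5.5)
Your strategy is the intended one; the paper only says the claim is ``analogous to Claim~\ref{X_4_1}''. The true vertex $s_2=w_3$ is joined to $w$ by a true edge and to $z$ by the crossing edge through $w_2$, so it takes over the role of $w_2$. Parts (1) and (2) go through this way, but part (3) has a genuine gap.

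Since $f_x^1=[wx_2x]$ is a $3$-face, $x_2=w_{d(w)-2}$. For $d(w)=6$ this gives $x_2=w_4$ and $N_H(w)=\{v,z,w_2,w_3,x_2,x\}$. Your list has seven entries, including a fictitious $w_4\neq x_2$, and the rest of your argument is built on it:
\begin{itemize}
\item The case $r_1=w_4$ cannot occur, since the true vertex $r_1$ would equal the false vertex $x_2$.
\item The remark that $xyzs_2w_4\ldots$ ``is not available'' is false even in your own labelling; it is exactly the cycle of part (2).
\item The proof of the ``moreover'' clause starts from a configuration that does not exist.
\end{itemize}
The missing observation is this: the face of $x_2$ between $x_2w$ and $x_2r_1$ is the second face on the edge $wx_2$, and for $d(w)=6$ that face is $f_w^3$ itself. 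So a triangle there is $f_w^3=[ww_3x_2]$ with $r_1=w_3=s_2$. There is no situation ``$d(f_w^3)\ge4$ with $r_1$ distinct'' from which to extract $xyzs_2wr_1x$.

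With this identification, (3) is immediate. The faces of $x_2$ on $r_1r_2$ and on $r_2x$ give the $6$-cycles $xyzwr_2r_1x$ and $xyzs_2wr_2x$ for every value of $d(w)$. The only remaining face of $x_2$ is $f_w^3$. If $d(f_w^3)\ge4$, then $x_2\in X_1$. Otherwise $x_2\in X_2^a$ (the two triangles share $wx_2$) and $d(f_w^3)=3$. This also explains why $d(w)=6$ is special: a triangle on $wr_1$ forces $r_1=w_{d(w)-3}$, which coincides with $s_2=w_3$ exactly when $d(w)=6$.

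There are also smaller slips in (1)--(2):
\begin{itemize}
\item The crossing edges at $y_2$ are $yt_1$ and $xt_2$, as your first two cycles presuppose. So the $t_1t_2$ cycle must be $xwzyt_1t_2x$, not $xwzyt_2t_1x$.
\item Likewise the crossing edges at $x_2$ are $xr_1$ and $wr_2$, so the $r_1r_2$ cycle is $xyzwr_2r_1x$.
\item When you apply Claim~\ref{3path} to $P=t_1xws_2$, note that $y$, $z$ and $w_2$ all lie on the same side of $P$, the side containing $v$. The other side is witnessed by the neighbours $x_2,\ldots,x_{d(x)-4}$ of $x$ (here $t_1=x_{d(x)-3}$); these exist because $d(x)\ge6$.
\item In (2) no auxiliary $w^*$ is needed. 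When $d(w)\ge7$ we have $r_1=w_{d(w)-3}\ne w_3=s_2$, so $xyzs_2wr_1x$ is already a $6$-cycle.
\end{itemize}
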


It follows that $w_2\in X_1\cup X_2^a\cup X_3$; otherwise a $6$-cycle would be found.
The proof is then divided into three subcases by symmetry.

\medskip
\noindent{\bf Case 2.1.} $w_2\in X_3$.
\medskip

By symmetry and Claim \ref{X_4_2}, if $d(f_u^1)=3$ for $u\in\{x,y,z\}$, then $u_2\in X_1$.
By Claim \ref{6+v}, $m_{4^+}(u)\ge2$ and $\tau(u\to T_v)\ge\frac12\times2=1$ for $u\in\{x,y\}$; and $m_{4^+}(u)\ge1$ and $\tau(u\to T_v)\ge\frac25\times2=\frac45$ for $u\in\{z,w\}$.
Now consider $w$.
If $d(w)\ge7$, or $d(w)=6$ with $m_{4^+}(w)\ge2$, then $\tau(w\to T_v)\ge\frac12\times2=1$ by Claim \ref{6+v}.
If $d(w)=6$ with $m_{4^+}(w)=1$,  we consider two possibilities.

$\bullet$ Suppose $d(f_x^1)\ge4$.
If $w_4$ is true, then $U\cup\{w_3,w_4\}$ is full and we set $C_6=xyzw_3w_4wx$.
Otherwise, $w_4$ is false.
If $d(f_x^1)=4$, i.e., $f_x^1=[xww_4x_2]$, then $U\cup\{w_3,x_2\}$ is full  and we set $C_6=xyzww_3x_2x$.
Hence $d(f_x^1)\ge5$, so $\tau(f_x^1\to T_v)\ge\frac15$ by (R2).
Therefore $\tau(w\to T_v)+\tau(f_x^1\to T_v)\ge\frac45+\frac15=1$.

$\bullet$ Suppose $d(f_x^1)=3$.
Let $t=d(f_w^3)$ and write $f_w^3=[ww_3s_1\cdots s_{t-3}x_2]$.
If $t=4$, then $f_w^3=[ww_3s_1x_2]$, and by (P1) $s_1$ is true.
Then $U\cup\{w_3,s_1\}$ is full and we set $C_6=xyzww_3s_1x$.
If $t=5$, write $f_w^3=[ww_3s_1s_2x_2]$.
By (P1), $s_2$ is true.
If $s_1$ is true, then $S=\{x,z,w,w_3,s_1,s_2\}$ is full and we set $C_6=xwzw_3s_1s_2x$.
If $s_1$ is false, let $s_2,w_3,q_1,q_2$ be the neighbors of $s_1$ in $H$.
If $w_3q_1\in E(G'')$, then $S=\{x,z,w,w_3,q_1,s_2\}$ is full and we set $C_6=xwzw_3q_1s_2x$; thus $w_3q_1\notin E(G'')$, so $m_{4^+}(f_w^3)\ge2$.
By (R2), $\tau(f_w^3\to T_{x_2})\ge\frac{5-4}{5-2}=\frac13$.
If $t\ge6$, then $\tau(f_w^3\to T_{x_2})\ge\frac{t-4}{t-1}\ge\frac25$ by (R2).
In all cases, we have $\tau(f_w^3\to T_{x_2})\ge\frac13$.
After (R0)--(R2) are carried out, $\tau(T_{x_2}\to T_v)=\sigma(T_{x_2})\ge-1+\frac12+\frac25+\frac13=\frac7{30}$.
Therefore $\tau(w\to T_v)+\tau(T_{x_2}\to T_v)\ge\frac45+\frac7{30}=\frac{31}{30}$.

The discussion for $z$ is analogous.
Consequently, $w'(T_v)\ge-4+1\times4=0$.

\medskip
\noindent{\bf Case 2.2.} $w_2\in X_2^a$.
\medskip

Assume, w.l.o.g., that $d(f_w^2)=3$. Then $w_3$ is true.
For $u\in\{x,y,z\}$, Claim \ref{X_4_2} gives $m_{4^+}(u)\ge2$, hence $\tau(u\to T_v)\ge\frac12\times2=1$ by Claim \ref{6+v}.
If $d(w)\ge7$, or $d(w)=6$ with $m_{4^+}(w)=2$, then $\tau(w\to T_v)\ge1$ by Claim \ref{6+v}, and hence $w'(T_v)\ge-4+1\times4=0$.
Thus we may assume $d(w)=6$ and $m_{4^+}(w)\le1$.
Suppose $d(f_x^1)\ge4$. Then $d(f_w^3)=3$.
As in Case 2.1, we obtain that $w_4$ is false and $d(f_x^1)\ge5$, so $\tau(f_x^1\to T_v)\ge\frac15$ by (R2).
Therefore $w'(T_v)\ge-4+1\times3+\frac45+\frac15=0$.
By Claim \ref{X_4_2}, $d(f_x^1)=3$ with $x_2\in X_1\cup X_2^a$.

If $x_2\in X_1$, then similarly to Case 2.1 we get $d(f_w^3)\ge5$ and $\tau(f_w^3\to T_{x_2})\ge\frac13$.
After (R0)--(R2) are carried out, by (r2) we have $\tau(T_{x_2}\to T_v)=\sigma(T_{x_2})\ge\frac7{30}$, hence $w'(T_v)\ge-4+1\times3+\frac45+\frac7{30}=\frac1{30}$.
If $x_2\in X_2^a$, then for some $u\in\{x,y,z\}$ with $m_{4^+}(u)\ge3$, Claim \ref{6+v} gives $\tau(u\to T_v)\ge\frac23\times2=\frac43$; then $w'(T_v)\ge-4+1\times2+\frac43+\frac23=0$.
Thus we may assume $m_{4^+}(u)=2$ for all $u\in\{x,y,z\}$.
We now discuss the following cases by symmetry.

$\bullet$ Suppose $d(f_y^1), d(f_z^1)\ge4$.
If $d(f_y^1)=4$, write $f_y^1=[yxx_{d(x)-2}y_2]$.
If $x_{d(x)-2}$ and $y_2$ are true, then $U\cup\{y_2,x_{d(x)-2}\}$ is full and we set $C_6=xwzyy_2x_{d(x)-2}x$.
If $x_{d(x)-2}$ is true and $y_2$ is false, then $y_3$ is true by (P1) and we set $C_6=xwzyy_3x_{d(x)-2}x$.
If $x_{d(x)-2}$ is false and $y_2$ is true, then $x_{d(x)-3}$ is true by (P1) and we set $C_6=xwzyy_2x_{d(x)-3}x$.
Thus, by symmetry, $d(f_y^1), d(f_z^1)\ge5$, so $\tau(f_y^1\to T_v)\ge\frac15$ and $\tau(f_z^1\to T_v)\ge\frac15$ by (R2).
Consequently $w'(T_v)\ge-4+1\times3+\frac23+\frac15\times2=\frac1{15}$.

$\bullet$ Suppose $d(f_y^1)=3$ and $d(f_z^1)\ge4$.
By Claim \ref{X_4_2}, $y_2\in X_1$.
As in the previous case, we obtain $d(f_z^1)\ge5$ and $\tau(f_z^1\to T_v)\ge\frac15$.
If $d(f_y^2)\ge5$, then by (R2) $\tau(f_y^2\to T_{y_2})\ge\frac15$, and after (R0)--(R2)  we have $\tau(T_{y_2}\to T_v)\ge\sigma(T_{y_2})\ge-1+\frac12\times2+\frac15=\frac15$.
Hence $w'(T_v)\ge-4+1\times3+\frac23+\frac15\times2=\frac1{15}$.
If $d(f_y^2)=4$, write $f_y^2=[yy_2qy_3]$.
By (P1), $q$ is true.
If $y_3$ is true, then $U\cup\{y_3,q\}$ is full and we set $C_6=xwzyy_3qx$; if $y_3$ is false, then $y_4$ is true and $U\cup\{y_4,q\}$ is full, so we set $C_6=xwzyy_4qx$.

$\bullet$ Suppose $d(f_y^1)=d(f_z^1)=3$.
If $d(y)\ge7$, then $\tau(y\to T_v)\ge\frac35\times2=\frac65$ by Claim \ref{6+v}.
 As before, we get $d(f_y^2)\ge5$ and $\tau(f_y^2\to T_{y_2})\ge\frac15$, so after (R0)--(R2) $\tau(T_{y_2}\to T_v)\ge\sigma(T_{y_2})\ge-1+\frac12+\frac35+\frac15=\frac3{10}$.
 Consequently $w'(T_v)\ge-4+1\times2+\frac65+\frac23+\frac3{10}=\frac16$.
Thus $d(y)=6$.
If $d(f_y^2)=d(f_y^3)=4$, then by the previous discussion $y_3$ is false.
Write $f_y^2=[yy_2qy_3]$ and $f_y^3=[yy_3pz_2]$; then $U\cup\{q,p\}$ is full and we set $C_6=xqpzwyx$.
Hence we may assume $d(f_y^2)\ge5$.
Since $m_{4^+}(f_y^2)\ge2$, $\tau(f_y^2\to T_{y_2})\ge\frac{d(f_y^2)-4}{d(f_y^2)-2}\ge\frac13$ by (R2).
After (R0)--(R2) are carried out, $\tau(T_{y_2}\to T_v)\ge\sigma(T_{y_2})\ge-1+\frac12\times2+\frac13=\frac13$ by (r1), so $w'(T_v)\ge-4+1\times3+\frac23+\frac13=0$.

\medskip
\noindent{\bf Case  2.3.} $w_2\in X_1$.
\medskip

From the previous subcases, we may assume that for each $u\in\{x,y,z\}$, either $d(f_u^1)\ge4$ or $d(f_u^1)=3$ with $u_2\in X_1$.
Hence $m_{4^+}(u)\ge2$ for all $u\in U$.
By Claim \ref{6+v}, this yields $\tau(u\to T_v)\ge1$ for all $u\in U$, and consequently $w'(T_v)\ge-4+1\times4=0$.

\end{document}